\documentclass[11pt]{amsart}
\usepackage[latin1]{inputenc}
\usepackage{pdfsync}
\usepackage[english]{babel}

\usepackage[colorlinks=true,urlcolor=blue,linkcolor=black,citecolor=black]{hyperref}
\usepackage{graphicx}

\usepackage[capitalize]{cleveref}
\usepackage{fullpage}
\usepackage{cite}
\usepackage{color}
\usepackage{amsmath}
\usepackage{amssymb}
\usepackage{amsfonts}
\usepackage{mathrsfs}
\usepackage{verbatim}
\usepackage{bbm}
\usepackage{enumerate}

\usepackage[left= 1.25 in, right= 1.25 in ,top= 1 in, bottom = 2 in]{geometry}
\newcommand{\R}{\mathbb{R}}

\newcommand{\Z}{\mathbb{Z}}
\newcommand{\N}{\mathbb{N}}

\renewcommand{\P}{\mathbb{P}}

\renewcommand{\mod}[1]{\text{ (mod #1)}}
\newcommand{\sse}{\subseteq}

\newcommand{\AVG}{\frac{1}{N} \sum_{n\leq N}}

\newcommand{\norm}[1]{\left\lVert#1\right\rVert}
\newcommand{\floor}[1]{\lfloor #1 \rfloor}

\newtheorem{theorem}{Theorem}[section]
\newtheorem*{theorem*}{Theorem}
\newtheorem{prop}[theorem]{Proposition}
\newtheorem{lemma}[theorem]{Lemma}

\newtheorem{corollary}[theorem]{Corollary}
\newtheorem*{corollary*}{Corollary}

\theoremstyle{definition}
\newtheorem*{definition*}{Definition}
\newtheorem{defn}[theorem]{Definition}

\theoremstyle{remark}

\newtheorem{remark}[theorem]{Remark}
\newtheorem*{remark*}{Remark}

\makeatletter
\def\thmhead@plain#1#2#3{%
  \thmname{#1}\thmnumber{\@ifnotempty{#1}{ }\@upn{#2}}%
  \thmnote{ {\the\thm@notefont#3}}}
\let\thmhead\thmhead@plain
\makeatother
\theoremstyle{definition}

\title{Visible Measures along \( \Omega(n)\) and Distribution of Horocycle Orbits}
\date{\today}

\author{Adam Kanigowski and Kaitlyn Loyd}

\thanks{No generative AI tools were used in producing the ideas, proofs, or text appearing in this manuscript.}

\begin{document}

\begin{abstract}
    Let \( \Omega(n) \) denote the number of prime factors of \( n \), counted with multiplicities. We study the set \(Acc^\Omega(x)\) of weak-$^*$ limits of the sequence $\frac{1}{N}\sum_{n\leq N}\delta_{T^{\Omega(n)}x}$ in \( \sigma\)-compact dynamical systems \( (X,T)\), demonstrating that if \( x \in X \) is quasi-generic for an ergodic measure \( \mu \), then \( \mu \in Acc^\Omega(x)\). This extends a result of Bergelson and Richter, who studied the problem in the setting of uniquely ergodic systems. 
    
    We give a more precise description of the set $Acc^\Omega(x)$ in the case of the horocycle flow on non-compact quotients of $SL(2,\R)$. We show that for every non-periodic $x\in X$, in addition to Haar measure, there exists sequences $(s_n), (c_n) \sse \R$ such that 
    $$  
        \frac{1}{\sqrt{2\pi}}\int_{-\infty}^{\infty}e^{-\frac{r^2}{2}}\nu^{i}_{s_n-2\log|1+c_nr|} dr\in Acc^{\Omega}(x),
    $$ 
    where \( \{ \nu^{i}_{s} \}_{i \leq k}\) denotes the one parameter family of periodic measures in each of the \( k \) inequivalent cusps. Depending on Diophantine properties of the non-periodic point \( x\), we show that \( Acc^\Omega(x)\) contains a full two parameter family of such periodic measures, as well as the Dirac measure at each cusp. In particular, these results yield almost-everywhere divergence of pointwise averages along \( \Omega(n) \) for the non-compact horocycle flow.

\end{abstract}

\maketitle

\section{Introduction}

    Let $X$ be a $\sigma$-compact metric space and $T: X\to X$ a continuous map. In this paper, we are interested in the asymptotic behavior of the sequence $(T^{\Omega(n)}x)_{n \in \N}$, where $x\in X$ and $\Omega(n)$ counts the number of prime factors of $n$ with multiplicities. Let $M(X,T)$ (respectively $M^e(X,T)$) denote the set of $T$-invariant Borel probability measures on $X$ (respectively $T$-ergodic Borel probability measures). When $M(X,T)$ is a singleton, the dynamical system \((X,T)\) is called uniquely ergodic. A natural way to study orbits sampled along $\Omega(n)$ is to determine possible weak-$^*$ accumulation points of the sequence $\frac{1}{N}\sum_{n\leq N} \delta_{T^{\Omega(n)}x}$. A landmark result of Bergelson and Richter \cite{BR2020}, states that if $(X,\mu, T)$ is uniquely ergodic and $X$ is compact, then for every $x\in X$, 
    \[
        \lim_{N \to \infty}\frac{1}{N}\sum_{n\leq N} \delta_{T^{\Omega(n)}x} =  \mu.
    \]
    The main focus of this paper is therefore on non-uniquely ergodic transformations and non-compact spaces. 
    
    For $x\in X$, let $Acc(x)$ denote the set of weak-$^*$ accumulation points of $\AVG \delta_{T^n x}$. We refer to elements of $Acc(x)$ as visible measures. Similarly, let $Acc^\Omega(x)$ denote the set of weak-$^*$ accumulation points of $\AVG \delta_{T^{\Omega(n)}x}$. Using this terminology, Bergelson and Richter's result states that if $(X, \mu, T)$ is uniquely ergodic and $X$ is compact, then $Acc(x)=\{\mu\}=Acc^\Omega(x)$ for every $x\in X$. Further, it follows from the proof in \cite{BR2020} that, in any compact dynamical system, each measure in the set $Acc^\Omega(x)$ is $T$-invariant. Therefore, it is natural to study the relation between these two subsets of $M(X,T)$. Our first main result extends the Bergelson-Richter Theorem, demonstrating that any ergodic measure visible along standard Birkhoff averages is also visible along \( \Omega \)-averages: 
  
    \begin{theorem}
    \label{thm:main}
        Let \( X \) be a $\sigma$-compact metric space and \( (X, T) \) a dynamical system. Then 
        \[
            Acc(x) \cap M^e(X,T) \sse Acc^{\Omega}(x)
        \]
        for every \( x \in X\).
    \end{theorem}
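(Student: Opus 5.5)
The plan is to reduce the $\Omega$-averages to Gaussian-smoothed Birkhoff averages along $\log\log N$, and then to use ergodicity of $\mu$ through an extremality argument.

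\textbf{Step 1: replace $\Omega$-averages by smoothed Birkhoff averages.} For $L>0$ put
\[
\nu_L:=\sum_{k\ge 0} g_L(k)\,\delta_{T^k x},\qquad g_L(k)=\frac{1}{\sqrt{2\pi L}}e^{-\frac{(k-L)^2}{2L}}
\]
(normalized to total mass one). By the Sathe--Selberg asymptotics for $\#\{n\le N:\Omega(n)=k\}$, i.e.\ a local Erd\H{o}s--Kac theorem, the distribution of $\Omega(n)$ for $n\le N$ is close in total variation to the discretized Gaussian $g_{\log\log N}$. The contribution of $k>(2-\varepsilon)\log\log N$ has to be handled separately, and it is negligible in total variation. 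Hence
\[
\Bigl\|\frac1N\sum_{n\le N}\delta_{T^{\Omega(n)}x}-\nu_{\log\log N}\Bigr\|_{TV}\longrightarrow 0 .
\]
Taking $N=\lfloor e^{e^L}\rfloor$, every weak-$^*$ limit of $\nu_{L_j}$ with $L_j\to\infty$ lies in $Acc^\Omega(x)$. It therefore suffices to show that $\mu$ is a limit of $\nu_{L_j}$ along some sequence $L_j\to\infty$.

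\textbf{Step 2: Cesàro averages of $\nu_L$.} Let $\mu\in Acc(x)\cap M^e(X,T)$ with $A_{M_j}:=\frac1{M_j}\sum_{m<M_j}\delta_{T^mx}\to\mu$. The windows have width $\sqrt L\ll M$, so a direct computation gives
\[
\Bigl\|\frac1M\sum_{L=1}^{M}\nu_L-A_M\Bigr\|_{TV}=O(M^{-1/2}).
\]
Consequently $\frac1{M_j}\sum_{L\le M_j}\nu_L\to\mu$.

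\textbf{Step 3: the extremality argument (the main obstacle).} One cannot directly say that individual $\nu_L$ are close to $A_{M_j}$, because quasi-genericity gives no control over windows of length $\sqrt{M_j}$. This is where ergodicity enters.

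First fix the topology. Since $X$ is $\sigma$-compact, work with the one-point compactification $X^*=X\cup\{\infty\}$, or equivalently the vague topology on sub-probability measures on $X$. The space $\mathcal P(X^*)$ is then compact metrizable.

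Consider the empirical measures $P_j:=\frac1{M_j}\sum_{L\le M_j}\delta_{\nu_L}$ on $\mathcal P(X^*)$, and pass to a subsequence with $P_j\to P$.
\begin{itemize}
\item The map $\eta\mapsto\eta$ (barycenter) is affine and continuous, so the barycenter of $P$ is the limit of $\frac1{M_j}\sum_{L\le M_j}\nu_L$, namely $\mu$.
\item Gaussian smoothing gives $\|T_*\nu_L-\nu_L\|_{TV}=O(L^{-1/2})$. Hence $P$ is supported on measures whose restriction $\eta|_X$ is a $T$-invariant sub-probability measure. Continuity of $T$ is used here, together with care near $\infty$: test only against $C_c(X)$.
\end{itemize}
Restricting to $X$, we get $\mu=\int \eta|_X\,dP(\eta)$. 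Each $\eta|_X$ is $T$-invariant and satisfies $\eta|_X\le$ (sub-probability), and $\mu$ is ergodic. Using the ergodic decomposition of each $\eta|_X$, or simply that $\eta|_X\ll\mu$ $P$-a.e.\ together with invariance of the Radon--Nikodym derivative, we get $\eta|_X=c_\eta\mu$ for $P$-a.e.\ $\eta$, with $c_\eta\le 1$. Comparing total masses gives $\int c_\eta\,dP=1$, so $c_\eta=1$ $P$-a.e. Thus $P=\delta_\mu$.

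It follows that $\nu_L\to\mu$ along a set of $L\le M_j$ of density tending to one. In particular there are $L_j\to\infty$ with $\nu_{L_j}\to\mu$, and Step 1 concludes the proof.

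\textbf{Where the difficulty lies.} I expect two delicate points:
\begin{itemize}
\item the uniform total-variation local limit theorem for $\Omega$ in Step 1, including the tail range where Sathe--Selberg degenerates;
\item the bookkeeping of escaping mass in Step 3.
\end{itemize}
The extremality argument itself should be routine once these are in place.
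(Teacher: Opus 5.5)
Your proof is correct, and it reaches the key step by a different route from the paper's.

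\textbf{Comparison with the paper.} Your Step 1 is the same reduction the paper makes in \eqref{eqn: approximated form}, using Hardy--Ramanujan, Erd\H{o}s's uniform asymptotic for $\pi_N(k)$ and Stirling. The tail worry is not real. You only need the local asymptotic for $|k-\log\log N|\le C_\varepsilon\sqrt{\log\log N}$. Outside that window both distributions have mass at most $\varepsilon$ (Hardy--Ramanujan for $\Omega$, Gaussian tails for $g_L$), so Sathe--Selberg is never needed.

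The real difference is how one shows that a window of width $\asymp\sqrt L$ at scale $L\sim M_j$ sees $\mu$. The paper does this by hand:
\begin{itemize}
\item Egorov's theorem, applied to Birkhoff averages for the ergodic $\mu$, gives a set $A_m$ of uniformly good points.
\item $A_m$ is thickened, by continuity at a fixed block length $H_0$, to an open finite union of balls $A'_m$, so that quasi-genericity bounds the visit frequency from below.
\item Pigeonhole then gives $\ell_i\in(N_i/2,N_i)$ whose window consists mostly of $H_0$-blocks starting in $A'_m$, and the Gaussian weight is frozen on each block.
\item Tightness is handled separately with the cutoffs $h_j$.
\end{itemize}
You replace all of this with a soft argument. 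Ces\`aro-averaging the windows reproduces $A_M$. The empirical distribution of the windows has limits supported on invariant measures with barycenter $\mu$. Ergodicity then enters only as extremality.

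Your route is shorter and yields the stronger conclusion that $\nu_L$ is close to $\mu$ for a density-one set of $L\le M_j$. It uses only two properties of the kernel: approximate shift-invariance, and Ces\`aro averages reproducing $A_M$. Tightness also comes for free, because $\mu(X)=1$. The paper's route is elementary and explicit, and avoids compactifications and measures on spaces of measures.

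\textbf{Two points in Step 3 need tightening.}

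First, testing against $C_c(X)$ does not by itself make $\eta|_X$ invariant for every $\eta$ in the support of $P$: $f\circ T\notin C_c(X)$ when $T$ is not proper. Do the barycenter step first.
\begin{itemize}
\item Since $\int\eta(\{\infty\})\,dP=\mu(\{\infty\})=0$, $P$-a.e.\ $\eta$ is a probability measure on $X$.
\item For $f,\chi\in C_c(X)$ with $0\le\chi\le1$, the map $\eta\mapsto|\eta((f\circ T)\chi)-\eta(f)|$ is continuous on $\mathcal P(X^*)$.
\item Its $P_j$-integral is at most $o(1)+\|f\|_\infty\frac1{M_j}\sum_{L\le M_j}\nu_L(1-\chi)$, which tends to $\|f\|_\infty\,\mu(1-\chi)$.
\item Hence $\int|\eta(f\circ T)-\eta(f)|\,dP\le 2\|f\|_\infty\,\mu(1-\chi)$, which can be made arbitrarily small.
\end{itemize}
So $P$-a.e.\ $\eta$ is a $T$-invariant probability measure on $X$, and no sub-probability bookkeeping is needed.

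Second, ``$\eta\ll\mu$ for $P$-a.e.\ $\eta$'' is not immediate, because the exceptional $P$-null set depends on the $\mu$-null set, and there are uncountably many. Use instead the set $G_\mu$ of $\mu$-generic points for a countable dense family. It satisfies $\int\eta(G_\mu)\,dP=\mu(G_\mu)=1$. For invariant $\eta$ with $\eta(G_\mu)=1$, dominated convergence gives $\eta(f)=\lim_n\eta\bigl(\frac1n\sum_{k<n}f\circ T^k\bigr)=\mu(f)$.
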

    
    The assumption of ergodicity is necessary in \cref{thm:main}, and in \cref{sec:counterex}, we construct a symbolic counterexample for the non-ergodic case. In fact, we can construct the point \( x \in X\) to be generic for the chosen measure: 
    
    \begin{prop}
    \label{prop:counterex}
        There exists a compact dynamical system $(X,T)$, non-ergodic measure \( \nu \in M(X,T)\), and point $x\in X$ such that $Acc(x) = \{\nu\}$ and $\nu\notin Acc^\Omega(x)$.
    \end{prop}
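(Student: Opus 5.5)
We take the simplest non-ergodic example: a system with two fixed points, where the generic point spends half its time near each. The point is that $\Omega(n)$ has a strongly biased distribution at the scale of the long blocks, so it cannot sample the two halves evenly.

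\emph{Construction.} Let $X\sse\{0,1\}^{\N}$ be the orbit closure, under the shift $T$, of a point $x$ built from blocks. Fix a rapidly increasing sequence $L_1<L_2<\cdots$ with $L_{k+1}/L_k\to\infty$. Define $x$ as the concatenation $0^{a_1}1^{b_1}0^{a_2}1^{b_2}\cdots$ with $a_k=b_k=L_k$. Then $X$ consists of the shifts of $x$ together with the points $0^j1^\infty$, $1^j0^\infty$, $0^\infty$ and $1^\infty$. Hence $M(X,T)=\{t\delta_{0^\infty}+(1-t)\delta_{1^\infty}\}$.

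Since the block lengths grow, $\frac1N\#\{n\le N: x_n=0\}$ oscillates. To get genericity we instead want $Acc(x)=\{\nu\}$ with $\nu=\frac12(\delta_{0^\infty}+\delta_{1^\infty})$. So we interleave at the correct scale: the block lengths must be $o(N)$ in their position, for example $a_k=b_k=k$. Then the density of $0$'s tends to $1/2$, and the lengths of runs tend to infinity. Consequently every weak-$^*$ limit of the empirical measures is supported on $\{0^\infty,1^\infty\}$ and gives each point mass $1/2$, so $Acc(x)=\{\nu\}$.

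\emph{Sampling along $\Omega$.} The $\Omega$-averages are $\frac1N\sum_{n\le N}\delta_{T^{\Omega(n)}x}$. Since $\Omega(n)\le\log_2 N$, and $\Omega(n)$ concentrates around $\log\log N$ with spread $\sqrt{\log\log N}$ (Erd\H{o}s--Kac), only the initial segment of $x$ of length about $\log\log N+O(\sqrt{\log\log N})$ matters. So we choose the block structure adapted to this scale. Make runs of length $\ell_k$ such that the windows $[\log\log N-C\sqrt{\log\log N},\,\log\log N+C\sqrt{\log\log N}]$, for a suitable sequence of $N$, fall inside $1$-runs, while the $0$-runs and $1$-runs still have equal lengths in total.

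This conflicts with Birkhoff genericity only if we can keep the density at $1/2$ while aligning windows. We can, because window alignment needs only one long $1$-run near each chosen scale $m_k$, of length much greater than $\sqrt{m_k}$ but much less than $m_k$. Compensating $0$-runs placed elsewhere keep the density at $1/2$, and long runs give limits supported on the fixed points. Concretely, use runs of length $\sim m^{2/3}$ alternating $0,1$ around position $m$. Then choose $N_k$ with $\log\log N_k$ at the centre of a $1$-run of length $\gg\sqrt{\log\log N_k}$. By Erd\H{o}s--Kac, with a tail estimate (Hardy--Ramanujan), the proportion of $n\le N_k$ with $T^{\Omega(n)}x$ in the cylinder $[1^{r}]$ tends to $1$ for each fixed $r$. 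Hence $\delta_{1^\infty}\in Acc^\Omega(x)$.

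We also need that $\nu$ is \emph{not} an accumulation point, and this is the main obstacle, since the statement requires $\nu\notin Acc^\Omega(x)$ rather than merely an extra measure in $Acc^\Omega(x)$. To get it we make \emph{all} windows biased. Choose the run lengths near position $m$ to be $\sim m^{2/3}$, but make the $1$-runs much longer than the $0$-runs locally, say in ratio $1:\varepsilon_m$ with $\varepsilon_m\to0$ slowly. Then restore density $1/2$ using long $0$-runs placed at positions that $\log\log N$ essentially never reaches with substantial weight.

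This does not work directly, because $\log\log N$ sweeps through every position. The alternative is to let the Gaussian window of width $\sqrt m$ always average over many runs of size $m^{1/3}$. Each window then sees $1$'s with frequency about $1-\varepsilon_m$. Meanwhile the Birkhoff density is computed with weights uniform in position, and we make it $1/2$ by taking $\varepsilon_m$ to be not small but scale-varying: density $1/2$ in Ces\`aro sense over $[1,M]$, yet near $1$ in each window $[m-C\sqrt m,m+C\sqrt m]$. Since windows cover everything, this forces density near $1$ everywhere, a contradiction.

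The correct fix is to exploit the dependence of the Erd\H{o}s--Kac window on the parity $\Omega(n)\bmod 2$ (prime number theorem equidistribution), or to use $T$ not equal to the shift but a skew system. I expect this step, reconciling Ces\`aro-genericity with biased $\Omega$-sampling, to be the real difficulty. The first thing I would try is to replace the fixed points by a period-$2$ orbit $\{p,Tp\}$ with a non-ergodic invariant combination, together with an alternating construction exploiting that $\Omega$-sampling weights positions by the Poisson-type law rather than uniformly.
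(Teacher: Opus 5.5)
There is a genuine gap: you never prove the one nontrivial requirement, $\nu\notin Acc^\Omega(x)$. The genericity construction is fine, but $\delta_{1^\infty}\in Acc^\Omega(x)$ is beside the point. Your closing suggestions also do not help. A single period-$2$ orbit carries only one invariant measure, and it is ergodic. By the prime number theorem, $\Omega(n)$ is equidistributed mod $2$, so parity creates no bias.

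Worse, in your framework the statement is false, which your ``windows cover everything'' computation already half-detects. Write $\kappa_N=\frac1N\sum_{n\le N}\delta_{T^{\Omega(n)}x}$.
\begin{itemize}
\item Since $\|\kappa_{N+1}-\kappa_N\|_{\mathrm{TV}}\le 2/(N+1)$, the set $Acc^\Omega(x)$ is connected.
\item Its elements are $T$-invariant, because the weights $\pi_N(k)/N$ vary slowly in $k$. Hence they lie on the segment $M(X,T)=\{t\delta_{0^\infty}+(1-t)\delta_{1^\infty}\}$.
\item Averaging the Gaussian window frequencies over the centre $m=\log\log N$ returns the Ces\`aro frequency $1/2$. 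So, by compactness, $Acc^\Omega(x)$ contains points with $t\ge 1/2$ and points with $t\le 1/2$.
\end{itemize}
Connectedness then forces $\nu\in Acc^\Omega(x)$. So no choice of run lengths can succeed as long as $\nu$ is a combination of just two ergodic measures spanning $M(X,T)$.

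The missing idea is to use at least three ergodic components, so that a connected set of limits can go around $\nu$ instead of through it. The paper takes the full shift on $\{0,1,2\}$ with $\nu=\frac13(\delta_{\overline{0}}+\delta_{\overline{1}}+\delta_{\overline{2}})$. It builds $x$ from runs cycling through $0,1,2$, of length about $M^{2/3}$ near position $M$: the scales are $N_{i+1}=\lfloor N_i^{4/3}\rfloor$, with runs of length $\lfloor N_{i+1}^{2/3}\rfloor$ on $[N_i,N_{i+1}]$.

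Runs of length $o(M)$ make $x$ generic for $\nu$. Runs much longer than $M^{1/2+1/100}$ ensure that the Hardy--Ramanujan window around $\log\log N$ meets at most two symbols. Hence every $\kappa_N$ gives mass at most $\varepsilon+o(1)$ to one of the cylinders $[0],[1],[2]$, whereas $\nu$ gives each of them mass $1/3$. Geometrically, the $\Omega$-limits stay near the edges of the triangle spanned by $\delta_{\overline{0}},\delta_{\overline{1}},\delta_{\overline{2}}$ and never reach its barycentre $\nu$.
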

    
    Theorem \ref{thm:main} shows in particular that if $(X,T)$ is a dynamical system such that every point $x\in X$ is generic\footnote{This by definition means that $Acc(x)$ is a singleton.} for an ergodic measure, then $Acc(x) \subseteq Acc^\Omega(x)$ for every $x\in X$. The main class of systems satisfying this condition is given by unipotent flows on quotients of semi-simple Lie groups, where the genericity assumption follows from fundamental results of Ratner \cite{Ratner}. Our second main set of results concern the relation between $Acc(x)$ and $Acc^\Omega(x)$ for the most studied class of unipotent flows: horocycle flows.

    Let $G=PSL(2,\R)$ and $\Gamma \subset G$ a lattice. Set $X=\Gamma\backslash G$ and let $\mu_X$ denote the Haar measure on $X$. Let 
    \[
        h_t=\begin{pmatrix} 1&t\\0&1\end{pmatrix}
    \]
    be the horocycle flow acting on $X$ by right multiplication. The discrete horocycle flow is given by the \( \Z \)-action of the time one map \( h = h_1 \) on \( X \). If $X$ is compact, it follows by a result of Furstenberg \cite{Furstenberg} that $(h_t)$ is uniquely ergodic. In this case, the result of \cite{BR2020} applies. Hence, we consider the non-compact case, in which it is known that for some $k\geq 1$, the space $X$ has $k$ inequivalent cusps, and for each cusp $C_i$, there is a one-parameter family of periodic measures corresponding to $C_i$ (see for instance \cite{D-S} or \cref{lem:perppoint}). Denote by $\{\nu^i_s\}_{s\in \R, i\leq k}$ the set of periodic measures for the horocycle flow on $X$. It follows from a result of Dani and Smillie \cite{D-S} that for every $x\in X$, $Acc(x)$ is a singleton, equal to either $\mu_X$ or a periodic measure. Moreover, let $\nu^i_\infty$ denote the Dirac measure at $\infty$ corresponding to $i$-th cusp. Note that \( \nu^i_\infty\) is a probability measure on the $k$-point compactification of $X$. Since \( Acc^\Omega(x) \sse M(X,T) \) for compact systems, every element of \( Acc^\Omega(x) \) is a convex combination of periodic measures, the volume, and the Dirac masses at infinity. In the following result, we show that the convex combinations that arise are restricted and given by Gaussian distributions:
    
    \begin{theorem}
    \label{thm:horo}
        Let \( (X, h) \) be the discrete horocycle flow on \( \Gamma \backslash G \). For any non-periodic point\footnote{In this paper, the terms periodic and non-periodic refer to the flow \( (h_t) \). The action of the discrete horocycle on a periodic orbit is then either a finite rotation or irrational circle rotation.} $x \in X$, we have the following: 
        \begin{equation}\label{eq:subs}
            Acc^{\Omega}(x)
            \subseteq 
            \{\mu_X\}
            \cup
            \{\nu^i_\infty\}_{i\leq k}
            \cup 
            \left\{\frac{1}{\sqrt{2\pi}}\int_{-\infty}^{\infty}e^{-\frac{r^2}{2}}\nu^{i}_{s_0-2\log|1+c_0(r-z_0)|} dr\right\}_{s_0,c_0,z_0 \in \R, i\leq k}.
        \end{equation}
        Moreover, there exists $D>0$ such that for every $i \leq k$ and $s_0 \in \R$, there exists\footnote{In fact, $C(\cdot)$ depends continuously on $s_0$.} $C(s_0)>0$ and $s \in [s_0 - \frac12,s_0+ \frac12]$ such that for some $c \in [C(s_0),D C(s_0)]$, we have 
        \begin{equation}\label{eq:subs'}
            \frac{1}{\sqrt{2\pi}}\int_{-\infty}^{\infty}e^{-\frac{r^2}{2}}\nu^{i}_{s-2\log|1+cr|} dr\in Acc^{\Omega}(x).
        \end{equation}
        Finally, $\mu_X \in  Acc^{\Omega}(x)$.
    \end{theorem}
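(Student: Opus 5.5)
The plan is to reduce $\Omega$-averages to Gaussian-weighted horocycle averages over windows of length $\sqrt{\log\log N}$, and then study those windows by geodesic renormalization. The last assertion is immediate. By Dani--Smillie \cite{D-S}, a non-periodic $x$ is generic for $\mu_X$, so $Acc(x)=\{\mu_X\}$. Since $\mu_X$ is ergodic, \cref{thm:main} gives $\mu_X\in Acc^\Omega(x)$. The rest of the proof deals with \eqref{eq:subs} and \eqref{eq:subs'}.

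\textbf{Step 1 (reduction to Gaussian windows).} Put $L=L_N=\log\log N$. I would use the Sathe--Selberg local limit theorem for $\Omega$: uniformly for $m=L+r\sqrt L$ with $|r|\le R$,
\[
\frac{1}{N}\#\{n\le N:\Omega(n)=m\}=\frac{1+o(1)}{\sqrt{2\pi L}}\,e^{-r^2/2}.
\]
This holds because the relevant parameter $m/L$ stays in a bounded range, and the mass with $|r|>R$ is small uniformly in $N$. Consequently, for a bounded continuous $f$ on the one-point (or $k$-point) compactification,
\[
\frac1N\sum_{n\le N} f(h^{\Omega(n)}x)=\frac{1}{\sqrt{2\pi L}}\sum_{m}e^{-\frac{(m-L)^2}{2L}}f(h^m x)+o(1).
\]
The next task is to replace this sum over integers $m$ by an integral $\int \varphi(r) f(x h_{L+r\sqrt L})\,dr$ with $\varphi$ the standard Gaussian density. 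I would do this by renormalizing first and then observing that, after renormalization, consecutive integer times become $e^{-\tau}$-close in the variable $r$. For the periodic limits one has to treat the discrete map $h$ separately on each periodic orbit. There it acts as a rotation, and I would expect to average out the rotation by the spread in heights across the window.

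\textbf{Step 2 (renormalization).} Set $a_\tau=\mathrm{diag}(e^{\tau/2},e^{-\tau/2})$, $\tau=\tfrac12\log L$ and $y_N=x h_L a_\tau$. Then $x h_{L+r\sqrt L}=y_N h_r a_{-\tau}$. So the window measure is the pushforward under the geodesic flow $a_{-\tau}$, for time $\tau\to\infty$, of the Gaussian measure on the unit horocycle segment through $y_N$. Equivalently, it is a long horocycle of length $\sqrt L$ based at $y_Na_{-\tau}=xh_L$, with a Gaussian profile. Passing to a subsequence, I would classify the limits by the behaviour of $y_N$ and of the Gaussian horocycle through it.

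If the family $\{y_N h_r\}_{|r|\le R}$ stays in a compact set and the long horocycle is not trapped near a cusp, then quantitative equidistribution of long horocycles (Dani--Smillie / Sarnak) gives the limit $\mu_X$.

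If the whole segment goes deep into cusp $i$ uniformly, the limit is $\nu^i_\infty$.

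In the remaining case, each point $y_Nh_r a_{-\tau}$ lies close to the periodic horocycle of height $s(r)$ in cusp $i$. Writing the segment in cusp coordinates, $s(r)$ changes under the Möbius action of $h_r$ on the upper half-plane exactly by $-2\log|1+c(r-z)|$. Here $s,c,z$ are limits of the cusp height, the "slope" of the segment relative to the cusp, and the position of the segment's closest approach. This produces the mixture in \eqref{eq:subs}, and it gives the inclusion.

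\textbf{Step 3 (existence, \eqref{eq:subs'}).} Given $i$ and $s_0$, I would choose $N$ so that $xh_L$ approaches cusp $i$ at the prescribed depth with the closest approach centered at $r\approx 0$, which forces $z_0=0$. A non-periodic horocycle orbit makes cusp excursions into every cusp, by Dani's description. As $L$ varies, the height and phase of $y_N$ move continuously and slowly; $L_N$ changes by $o(1)$ per step. An intermediate value argument then lets me hit height $s\in[s_0-\tfrac12,s_0+\tfrac12]$ and center $0$ along a subsequence.

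The parameter $c$ is determined by the geometry of the excursion. I expect it to be comparable, within a fixed factor $D$ coming from the finite cusp data of $\Gamma$, to a quantity $C(s_0)$ depending continuously on the height. Proving this bounded-ratio control of $c$, uniformly over excursions, is the step I expect to be the main obstacle. Together with the discrete-versus-continuous comparison of Step 1 in the periodic regime, it requires precise cusp-excursion estimates for horocycles.
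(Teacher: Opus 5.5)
Your proof of $\mu_X\in Acc^\Omega(x)$ via Dani--Smillie and \cref{thm:main} is the paper's argument. The existence statement \eqref{eq:subs'}, however, has a genuine gap, and Step~3 targets the wrong configuration.

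In $s_0-2\log|1+c_0(r-z_0)|$ the deepest point of the cusp excursion is at $r=z_0-1/c_0$, not at $r=z_0$. So $z_0=0$ means the \emph{center} of the window sits at bounded height $s_0$, while the excursion peak is a bounded number of window-widths away. If you center the window on the closest approach, you get the degenerate case $1-c_0z_0=0$, i.e.\ the profile $s_*-2\log|r|$. That is not of the form \eqref{eq:subs'} with $c\in[C(s_0),DC(s_0)]$.

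What you actually need is a time $t\approx L$ with $h_tx=\gamma p\begin{pmatrix}a&0\\c&a^{-1}\end{pmatrix}$, where $p$ has bounded period, sits at height $\approx s_0$, and $c\asymp t^{-1/2}$. Equivalently, you need a horocycle excursion at time $L-O(\sqrt L)$ of depth $\log L+O(1)$. ``Excursions into every cusp'' says nothing about how depth relates to time. An intermediate value argument in the single parameter $L$ can move the window's position, but it cannot adjust the depth or shear of an excursion, which are properties of $x$. This is exactly the step you flag as the main obstacle, and it is the core of the proof.

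The paper supplies it with three ingredients:
\begin{enumerate}
\item Recurrence of the forward geodesic orbit, $g_{\log(DT_\ell)}x\in K_x$, which is equivalent to non-periodicity.
\item \cref{lem:perp}, which gives some $c\asymp T^{-1/2}$ with $g_{\log(CT)}v_cp\in B(e,1)$.
\item \cref{prop:per}. Because $g_{\log(DT)}x\in K_x$, the renormalized segment $\{h_{\bar t}g_{\log T}x\}_{\bar t\in[1,D]}$ is controlled by effective equidistribution (\cref{thm:SFF}). This forces it into a positive-measure set built around $g_{\log T}v_cp$, which yields $t_\ell\in[T_\ell,DT_\ell]$.
\end{enumerate}
This also shows that $D$ depends on $x$ through $K_x$, not only on cusp data.

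Steps~1--2 leave holes in the inclusion \eqref{eq:subs} as well.

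First, you cannot replace $\sum_m f(xh_m)$ by an integral on the grounds that integer times become $e^{-\tau}$-close in $r$. The function $f$ is evaluated at $xh_m$, and $xh_m$ and $xh_{m+1}$ differ by $h_1$, which is not small. The discrete sampling must be handled directly:
\begin{itemize}
\item in the equidistributed regime, by \cref{thm:discSFF};
\item on sheared closed horocycles, by the second-derivative exponential sum bound (\cref{lem:ns}). This needs $|c||I|^2\to\infty$, which \cref{lem:ntc} supplies for non-periodic $x$; without it, a closed orbit with rational rotation would not produce $\nu^i_s$.
\end{itemize}

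Second, your ``remaining case'' presupposes its conclusion and does not rule out mixed limits, for example part $\mu_X$ and part cusp mass. The missing chain runs as follows. If $\kappa\neq\mu_X$, some subwindow of length $\asymp\sqrt L$ has bounded Str\"{o}mbergsson parameter $r$, by \cref{thm:discSFF}. This parameter is bounded below because the window meets a fixed ball. A closed horocycle of period bounded above and below then shadows a long piece of the orbit (\cref{lem:str}). That gives the matrix form with $|c|\ll L^{-1/2}$ (\cref{lem:ttadd}). Finally, \cref{lem:tadd} and \cref{lem:fini} propagate this single shear to the whole window.
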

    
    The difference between Equations \eqref{eq:subs} and \eqref{eq:subs'} is the following: Equation \eqref{eq:subs} states that any accumulation point belongs to the set on the right hand side, with any parameters $s_0,c_0,z_0\in \R$ and \(i\leq k\), though equality need not hold (see for instance \cref{thm:horoboun}). On the other hand, Equation \eqref{eq:subs'} states that given $s_0 \in \R$ and \( i \leq k\), one can find an accumulation point centered a bounded distance from \( \nu^i_{s_0}\).

    \begin{corollary} 
    \label{cor:propersubs}
        For every non-periodic $x\in X$, $Acc(x)$ is a proper subset of $Acc^\Omega(x).$
    \end{corollary}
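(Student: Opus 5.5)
The corollary follows by combining Theorem~\ref{thm:main} (or the final clause of Theorem~\ref{thm:horo}) with the Gaussian accumulation points in \eqref{eq:subs'}. Fix a non-periodic $x\in X$. By Dani--Smillie, $Acc(x)$ is a singleton. Since $x$ is not periodic for $(h_t)$, its orbit equidistributes, so $Acc(x)=\{\mu_X\}$; equidistribution of the continuous flow passes to the time-one map, because $\mu_X$ is ergodic for $h_1$, which is mixing. Since $\mu_X$ is ergodic, Theorem~\ref{thm:main} gives $\mu_X\in Acc^\Omega(x)$. This is also the last assertion of Theorem~\ref{thm:horo}. Hence $Acc(x)\subseteq Acc^\Omega(x)$.

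For properness, I exhibit an element of $Acc^\Omega(x)$ different from $\mu_X$. Fix any cusp index $i$ and any $s_0\in\R$. By \eqref{eq:subs'} there are $s\in[s_0-\tfrac12,s_0+\tfrac12]$ and $c\in[C(s_0),DC(s_0)]$ with
\[
\eta:=\frac{1}{\sqrt{2\pi}}\int_{-\infty}^{\infty}e^{-\frac{r^2}{2}}\nu^{i}_{s-2\log|1+cr|}\,dr\in Acc^\Omega(x).
\]
It remains to check that $\eta\neq\mu_X$. The measure $\eta$ is an average of periodic horocycle measures $\nu^i_{t}$, with $t$ ranging over $\R$. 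The set $r=-1/c$, where $t=+\infty$, has Gaussian measure zero, so $\eta$ is a genuine probability measure on $X$. Each $\nu^i_t$ is supported on the union $P_i$ of closed horocycles in the $i$-th cusp. This union is the $A$-orbit, or a countable union of $A$-translates, of a single closed horocycle, hence a smooth codimension-one submanifold (a union of $h$- and $a$-orbits). In particular $P_i$ is a Borel set with $\mu_X(P_i)=0$. But $\eta(P_i)=1$, so $\eta\perp\mu_X$, and in particular $\eta\neq\mu_X$.

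One point needs care. If weak-$^*$ limits are taken in the one-point (or $k$-point) compactification, I must make sure $\eta$ is not a mixture involving $\nu^i_\infty$. This holds because $\eta(X)=1$ by the null-set remark above. Thus $\eta\in Acc^\Omega(x)\setminus Acc(x)$, and the inclusion is proper. The only mildly delicate step is confirming that $P_i$ is $\mu_X$-null. Writing $P_i$ as the image of $\{h_ua_t\}$ applied to a single point, which is a 2-dimensional immersed submanifold of the 3-manifold $X$, handles this.
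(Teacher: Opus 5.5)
Your argument is correct and is the derivation the paper intends; the paper states the corollary without a separate proof. The steps are: $Acc(x)=\{\mu_X\}$ by Dani--Smillie, $\mu_X\in Acc^\Omega(x)$ by Theorem~\ref{thm:main} (or the last clause of Theorem~\ref{thm:horo}), and the Gaussian mixture from \eqref{eq:subs'} gives a second accumulation point different from $\mu_X$. Your singularity check via the $\mu_X$-null set $\bigcup_{t}g_t({\rm Orb}(e_i))$ is a sound way to make the unstated fact $\eta\neq\mu_X$ rigorous, with one small correction: this set is dense in $X$ because long closed horocycles equidistribute, so it is an immersed rather than embedded surface. As your final sentence already notes, immersed is all that is needed for it to be $\mu_X$-null.
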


    Note that for each periodic point $p \in X$, it follows from the Bergelson-Richter theorem that $Acc^{\Omega}(p)$ is a singleton equal to \( Acc(p) \). Then \cref{cor:propersubs} classifies non-periodic points as those for which \( Acc(x) \subsetneq Acc^\Omega(x) \). A result of Furlong \cite{Furlong25} explores how large the set \( Acc^\Omega(x) \setminus Acc(x) \) can be in symbolic systems. Taking \( (X, \sigma) \) to be the full shift on a finite alphabet, he demonstrates existence of a point \( x \in X \) generic for a Dirac measure such that \( Acc^\Omega(x) = M(X,\sigma)\), i.e. any \( \sigma\)-invariant measure is visible along \( \Omega(n)\).
       
    As another corollary of \cref{thm:horo}, we resolve an unanswered case regarding pointwise convergence along \( \Omega(n) \). In \cite{Loyd2021}, the second author showed that in any non-atomic ergodic system \( (X, \mu, T)\), there exists \( f \in L^\infty(\mu)\) such that the \( \Omega\)-ergodic averages \( \AVG f(T^{\Omega(n)}x )\) diverge almost everywhere. This contrasts the Bergelson-Richter theorem, which guarantees pointwise convergence for continuous functions along \( \Omega(n) \) in uniquely ergodic systems. The question remained whether pointwise convergence for continuous functions holds in an arbitrary ergodic system. However, \cref{thm:horo} implies that pointwise convergence fails in this setting:

    \begin{corollary}
    \label{cor:ptwise}
        Let \((X, \mu_X, h)\) be the discrete horocycle flow on \( X = \Gamma \backslash PSL(2,\R)\), where \( \mu_X \) denotes the Haar measure on \( X \). Then there exists a continuous function \( f \) with compact support such that the averages
        \[
            \AVG f(h_{\Omega(n)}x ) 
        \]
        diverge for \(\mu_X\)-almost every point \( x \in X \).
    \end{corollary}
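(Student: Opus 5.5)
We deduce \cref{cor:ptwise} from \cref{thm:horo}. Almost every point is generic for Haar measure, and the horocycle flow acts there non-periodically; we then find a single compactly supported continuous function whose $\Omega$-averages have two different accumulation values at each such point.

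First, the periodic orbits of $(h_t)$ form a $\mu_X$-null set. Indeed, they lie in the union over the $k$ cusps of the one-parameter families of closed horocycles, which is a countable union of smooth codimension-one submanifolds (compare \cref{lem:perppoint}). So for $\mu_X$-a.e.\ $x$, \cref{thm:horo} applies. It gives $\mu_X \in Acc^\Omega(x)$. It also gives, for cusp $i=1$ and each $s_0\in\R$, a measure
\[
\eta_x=\frac{1}{\sqrt{2\pi}}\int_{-\infty}^{\infty}e^{-\frac{r^2}{2}}\nu^{1}_{s-2\log|1+cr|}\, dr \in Acc^\Omega(x),
\]
with $s\in[s_0-\frac12,s_0+\frac12]$ and $c\in[C(s_0),DC(s_0)]$. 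The point is that $s_0$ can be fixed once and for all, independently of $x$, so that only the parameters $(s,c)$ depend on $x$, and they range over a compact set $K=[s_0-\frac12,s_0+\frac12]\times[C(s_0),DC(s_0)]$.

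Next, choose a continuous compactly supported $f\ge 0$ that separates $\mu_X$ from every such $\eta$, uniformly over $(s,c)\in K$. The measure $\eta_{s,c}$ is a Gaussian mixture of periodic measures $\nu^1_{u}$ with $u=s-2\log|1+cr|$. Since $\log|1+cr|\to-\infty$ near $r=-1/c$, part of the mass sits on arbitrarily high periodic horocycles, i.e.\ deep in the cusp. Still, a definite fraction of the mass lies on $\nu^1_u$ with $u$ in a bounded window $[a,b]$, uniformly for $(s,c)\in K$. This holds because $r\mapsto s-2\log|1+cr|$ is continuous in $(s,c,r)$, and $r$ in a fixed interval (say $|r|\le 1$, away from $-1/c$, using $c\ge C(s_0)$ bounded away from $0$) gives $u$ in a compact window.

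The union of the closed horocycles $\{\nu^1_u\}_{u\in[a,b]}$ is a compact set $P\subset X$ with $\mu_X(P)=0$. Take $f\ge0$ continuous and compactly supported, with $f=1$ on $P$ and $\int f\,d\mu_X<\epsilon$; this is possible because $P$ is compact and null. Then
\[
\int f\,d\eta_{s,c}\ \ge\ \frac{1}{\sqrt{2\pi}}\int_{|r|\le1}e^{-r^2/2}\,dr=:\delta>0
\]
for all $(s,c)\in K$. Choosing $\epsilon<\delta$, at every non-periodic $x$ the averages $\AVG f(h_{\Omega(n)}x)$ have accumulation points $\le\epsilon$ (along the subsequence realizing $\mu_X$) and $\ge\delta$ (along the subsequence realizing $\eta_x$). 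Integrating a compactly supported continuous function is continuous for weak-$^*$ convergence on the one-point compactification, so these two values really are accumulation points of the scalar averages. Hence the averages diverge.

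The main obstacle is the uniformity: $f$ must be independent of $x$, while the measure $\eta_x$ supplied by \cref{thm:horo} depends on $x$ through $s$ and $c$. This is handled by fixing $s_0$ and using the bounds $c\in[C(s_0),DC(s_0)]$ and $|s-s_0|\le\frac12$, which make the Gaussian mass on the compact band $P$ bounded below uniformly. One must also check that $r$ in a fixed interval keeps $u$ bounded even when $1+cr$ can vanish. If it does, shrink the $r$-interval to one on which $1+cr$ stays away from $0$ for all $c$ in the compact range, for instance $r\in[0,1]$, where $1+cr\ge1$.
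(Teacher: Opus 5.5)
You correctly identify uniformity in $x$ as the crux, but you resolve it with an assumption that \cref{thm:horo} does not give you. In that theorem, $D$ (and, as stated, $C(s_0)$ as well) is chosen \emph{after} the non-periodic point $x$, and in the proof it genuinely depends on $x$. In \cref{prop:per}, $D=D_{\Gamma,x}$ is taken large in terms of $d'=\min_{z\in K_x}e^{-d_X(z,e)}$, where $K_x$ is a compact set visited by $g_{\log T}x$ along an unbounded sequence of times. So fixing $s_0$ only gives $c\in[C(s_0),D_xC(s_0)]$, and the parameters do not lie in an $x$-independent compact set.

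This matters for your construction. For $r\in[0,1]$ you only know $u\in[s-2\log(1+c),s]$, so the band $P$, and hence $f$, is built from an upper bound on $c$. Without a uniform bound, no single $f$ of this kind can work. For fixed $s$, one has $\eta_{s,c}\to\mu_X$ as $c\to\infty$: for every $r\neq 0$, $u\to-\infty$, the closed horocycles become long and equidistribute by Sarnak's theorem. By dominated convergence, $\eta_{s,c}(f)\to\mu_X(f)$.

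The missing step is the one the paper supplies at the start of its proof. The constant $C(s_0)$ comes from \cref{lem:perp} and depends only on $p=g_{s_0}(e_i)$. The constant $D$ depends on $x$ only through the return set $K_x$. Fix a compact $K$ with $\mu_X(K)>0$. By ergodicity of the geodesic flow, for $\mu_X$-a.e.\ $x$ the set of times $s>0$ with $g_sx\in K$ is unbounded. So one may take $K_x=K$, and hence a single $D$, for all $x$ outside a null set $\mathcal{N}$.

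With this added, your argument is complete, and it is a genuinely different and somewhat cleaner separation than the paper's. The paper compares mass far out in the cusp: the Gaussian mixture puts mass $\asymp e^{-C_\varepsilon}$ beyond distance about $2C_\varepsilon$ from $e$, while Haar measure puts mass $\asymp e^{-2C_\varepsilon}$ there. That comparison needs $c_x$ confined to $[C_1,DC_1]$ from both sides. Your compact Haar-null band of closed horocycles needs only $c>0$ and the uniform upper bound.

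One small correction: in your displayed lower bound for $\int f\,d\eta_{s,c}$, use $r\in[0,1]$ rather than $|r|\le 1$, as you note at the end. The singular point $-1/c$ can lie in $[-1,1]$; in the paper $C>10$.
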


    The following two results give a more precise description of which measures on the right hand side of Equation \eqref{eq:subs} lie in the set $Acc^\Omega(x)$. In particular, inclusion of the measures $\nu^i_\infty$ and $\nu^i_s$, which arise by taking $c_0=0$, turns out to depend on the Diophantine properties of $x\in X$. Let $(g_t)$ denote the geodesic flow on \( X \).

    \begin{theorem} 
    \label{thm:horoboun} 
        Let $x\in X$ be such that the orbit $\{g_s(x): s\in \R\}$ is bounded\footnote{It is known that this set has full Hausdorff dimension.}. Then 
        $$
            \nu^i_s\notin Acc^{\Omega}(x).
        $$
        for every $i\leq k$ and $s\in \R\cup \{\infty\}$.
    \end{theorem}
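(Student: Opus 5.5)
Since $Acc^\Omega(x)$ is contained in the right-hand side of \eqref{eq:subs}, the plan is to show that the case $c_0=0$, where the Gaussian mixture collapses to a single periodic measure $\nu^i_{s_0}$ or to the Dirac mass $\nu^i_\infty$, can never occur when the geodesic orbit of $x$ is bounded. The approach is to go back to the mechanism behind \eqref{eq:subs}.

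By the Erd\H{o}s--Kac theorem, for $n\le N$ the value $\Omega(n)$ is distributed like $\log\log N + r\sqrt{\log\log N}$ with $r$ standard Gaussian. So the $\Omega$-average is asymptotically a Gaussian average of the uniform measures on the horocycle segments $\{h_t x: 0\le t\le T\}$. Here $T = T_N + r\sqrt{T_N}$ and $T_N=\log\log N$. More precisely, one uses the local equidistribution of $\Omega$ on scales $\sqrt{T_N}$ that underlies the proof of \eqref{eq:subs}.

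Write $x h_T = g_{\log T}\,(g_{-\log T}x)\,h_1\,g_{\log T}$ via the renormalization $h_T = g_{-\log T}h_1 g_{\log T}$ (up to sign conventions). Then the segment of length $T$ is the image under $g_{\log T}$ of a unit segment based at $y_T=g_{-\log T}x$. The parameters in \eqref{eq:subs} come from the location of $y_{T_N}$. The limit is periodic-like, with height parameter $s_0$, only when $y_{T_N}$ lies at height $\approx e^{s_0/2}$ in cusp $i$ and the unit segment is nearly aligned with the periodic horocycle there. The factor $c_0$ measures the ratio of the fluctuation window $\sqrt{T_N}$ to the distance from $y_{T_N}$ to the relevant periodic orbit, i.e.\ how the cusp excursion depth varies across the window.

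The key step is to show quantitatively that $c_0=0$ (and $s_0=\infty$) forces unbounded geodesic excursions. Concretely, for the segment of length $T$ to be $\epsilon$-close in distribution to $\nu^i_{s}$ uniformly for $T$ in the window $[T_N - A\sqrt{T_N},\,T_N+A\sqrt{T_N}]$, the point $g_{-\log T}x$ must lie at cusp height at least of order $\sqrt{T_N}\,/\,\epsilon$. Otherwise the Dani--Smillie/Strömbergsson quantitative nondivergence gives a definite fraction of the segment in a fixed compact set, or a definite drift in the periodic parameter $s-2\log|1+cr|$ with $c\gtrsim$ (height)$^{-1}\sqrt{T_N}$.

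The same reasoning handles $\nu^i_\infty$. If the geodesic orbit of $x$ stays in a compact set $K$, then every $g_{-\log T}x$ has bounded height. Along the sequence $N_j$ realizing the accumulation point, the effective $c_0$ is therefore bounded below by a positive constant times $\sqrt{T_N}$ divided by a bounded quantity, so it does not tend to $0$. In fact, for bounded geodesic orbits, a full proportion of each segment returns to a compact set, which pushes the limit toward $\mu_X$ or a genuinely spread mixture. Either outcome contradicts the limit being a single $\nu^i_s$ or $\nu^i_\infty$, since a nondegenerate mixture $\int\nu^i_{s-2\log|1+cr|}\,dr$ with $c\ne 0$ is not a periodic measure, its $s$-parameter being non-constant on a set of positive Gaussian measure.

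I expect the main obstacle to be this quantitative link between the Diophantine (bounded-orbit) condition and $c_0\neq0$. It needs uniform control over the whole Erd\H{o}s--Kac window, not just at the single time $T_N$. I would try to derive it from the explicit parametrization used to prove \eqref{eq:subs}: express $c_0$ and $s_0$ in terms of the coordinates of $g_{-\log T_N}x$ relative to the cusp, and check that bounded height makes $c_0$ stay away from $0$.
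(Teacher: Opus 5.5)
There is a genuine gap: both quantitative steps you defer are exactly the content of the proof, and your framing of the $\Omega$-average is wrong.

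\textbf{The Dirac masses $\nu^i_\infty$ and the framing.} In the proof of \eqref{eq:subs}, $\nu^i_\infty$ is not the $c_0=0$ (or $s_0=\infty$) degeneration of the Gaussian mixture. It comes from Case I, where the whole Erd\H{o}s--Kac window $|t-\log\log N|<r_m\sqrt{\log\log N}$ of $h_tx$ stays in a cusp. In that case there is no periodic approximation and no parameter $c_0$, so ``the same reasoning handles $\nu^i_\infty$'' has nothing to act on. In Case II, moreover, $c_0=ac\sqrt{\log\log N}$ is bounded, so your ``$c_0\gtrsim\sqrt{T_N}$'' is not what happens.

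To exclude Case I you need non-divergence at exactly the window scale: there are $C,\rho>0$ such that for all large $T$ some $t'\in[T,T+C\sqrt T]$ has ${\rm inj}(h_{t'}x)>\rho$. The paper gets this from Proposition \ref{prop:lin-moh}, applied to $\bar x=g_{-\log T}x$ with an $r$-interval of length $\eta\asymp T^{-1/2}$ and flow time $\log T$. Here ${\rm inj}(\bar x)$ is bounded below because the geodesic orbit is bounded. The scale $\eta\asymp T^{-1/2}$ is precisely the threshold at which $\log T\ge|\log(\eta^2{\rm inj}(\bar x))|+C$ can hold.

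Your sentence ``a full proportion of each segment returns to a compact set'' is either about segments $\{h_tx:0\le t\le T\}$, which is true but irrelevant, or about windows of length $\asymp\sqrt T$, which is exactly this unproved point. Relatedly, the $\Omega$-average is not a Gaussian average of uniform measures on $\{h_tx:0\le t\le T\}$. It is a Gaussian average of the point masses $\delta_{h_tx}$ over $t$ in the window. The segment measures tend to $\mu_X$ for every non-periodic $x$ by Dani--Smillie, so your description would force $Acc^\Omega(x)=\{\mu_X\}$, contradicting \eqref{eq:subs'}.

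\textbf{The periodic measures $\nu^i_s$.} You identify the right mechanism: a bounded geodesic orbit should keep $c_0$ away from $0$. But you leave it unproved, and your heuristic ``cusp height $\gtrsim\sqrt{T_N}/\epsilon$'' is not the correct relation; the renormalized point actually sits at depth $\approx-2\log|c_0|$ up to $O(1)$. What is needed is Lemma \ref{lem:low}.

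Suppose $h_Tx=\gamma p\begin{pmatrix}a&0\\c&a^{-1}\end{pmatrix}$ with $|c|\le dT^{-1/2}$. By \eqref{eq:norm}, $h_1(g_{\log T}x)=g_{\log T}(h_Tx)$ lies in $\{v_u g_{\log T+2\log a}(p):|u|\le adT^{1/2}\}$. Since $g_{\log T}p$ is at distance $\ge\log T-O(1)$ from $e$, and $v_u$ moves by at most $2\log(dT^{1/2})+O(1)$, this point is at distance $\ge-2\log d-O(1)$ from $e$. For $d$ small this contradicts boundedness of $\{g_sx\}$.

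Apply this to the pairs $(p_m,c_m)$ produced in Case II. Their periods are bounded above and below, so $d$ is uniform in $m$. This gives $|c'_m|=|a_mc_m|\sqrt{\log\log N_{i_m}}\ge d'>0$, hence $c''\neq0$ in \eqref{eq:pi}, and the limit is a genuine mixture rather than some $\nu^i_s$.

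Without this lemma and the window-scale non-divergence above, the proposal is a heuristic outline rather than a proof.
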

    
    \cref{thm:horoboun} shows that for a set of full Hausdorff dimension, periodic measures do not appear as accumulation points for $\Omega$-averaging and there is no escape of mass. On the other hand, there are points for which the behavior is very different: 
    
    \begin{theorem}
    \label{thm:horoubo}
        There exists a dense $G_\delta$-set of points $G \subset X$ such that for every $x \in G$,
        $$
             Acc^{\Omega}(x)
             =
            \{\mu_X\}
            \cup
            \{\nu^i_\infty\}_{i\leq k}
            \cup 
            \left\{\frac{1}{\sqrt{2\pi}}\int_{-\infty}^{\infty}e^{-\frac{r^2}{2}}\nu^{i}_{s_0-2\log|1+c_0(r-z_0)|} dr\right\}_{s_0,c_0,z_0 \in \R, i\leq k}.
        $$
        In other words, there is equality in Equation \eqref{eq:subs}.
    \end{theorem}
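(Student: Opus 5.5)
We would prove \cref{thm:horoubo} by a Baire category argument, combined with the mechanism behind Equation \eqref{eq:subs}. That mechanism is an Erd\H{o}s--Kac type reduction. For $n\le N$, $\Omega(n)$ is approximately Gaussian with mean $\log\log N$ and standard deviation $\sqrt{\log\log N}$. Hence $\frac1N\sum_{n\le N}\delta_{h_{\Omega(n)}x}$ is close to
\[
\frac{1}{\sqrt{2\pi}}\int e^{-r^2/2}\,\delta\text{-averages of } h_{L+r\sqrt L}x ,\qquad L=\log\log N,
\]
where the $\delta$-averages are taken over a short window of length $o(\sqrt L)$ around $L+r\sqrt{L}$. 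We take as available, from the proof of \eqref{eq:subs}, that these windowed orbit averages are governed by the geodesic renormalization. Write the window $h_{[T,T+\ell]}x$ with $\ell\asymp\sqrt{L}$ and renormalize by $g_{2\log \ell}$. If $g_{2\log\ell}h_{L}x$ lies deep in cusp $i$ at a periodic horocycle of height corresponding to $s_0$, then the piece near $L+r\sqrt L$ equidistributes to $\nu^i_{s_0-2\log|1+c_0(r-z_0)|}$. Here $c_0$ and $z_0$ measure how the point drifts along the (nearly periodic) horocycle relative to the cusp. If instead the renormalized point stays in a compact set, one gets $\mu_X$. If it goes to infinity fast, one gets $\nu^i_\infty$. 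The inclusion ``$\subseteq$'' is already \eqref{eq:subs}, so only ``$\supseteq$'' needs proof.

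\textbf{Step 1 (open dense sets).} Fix a countable dense set $\{\eta_m\}$ of the target family. It consists of $\mu_X$, the $\nu^i_\infty$, and the Gaussian mixtures with rational $(s_0,c_0,z_0)$ and each $i$. Fix a metric $d$ for weak-$^*$ convergence on the $k$-point compactification. Define
\[
U_{m,M}=\Big\{x:\ \exists N>M,\ d\Big(\tfrac1N\textstyle\sum_{n\le N}\delta_{h_{\Omega(n)}x},\eta_m\Big)<\tfrac1m\Big\}.
\]
Each finite average depends continuously on $x$, so $U_{m,M}$ is open. We set $G=\bigcap_{m,M}U_{m,M}$. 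Since $Acc^\Omega(x)$ is closed and contains each $\eta_m$ for $x\in G$, it contains the closure of the target family. Note that the Gaussian mixtures accumulate on $\mu_X$ only through compact parts, and $\mu_X$ is included anyway by \cref{thm:horo}. This gives ``$\supseteq$''.

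\textbf{Step 2 (density).} This is the heart of the argument. Given an open $V\subset X$ and a target $\eta_m$, we need $y\in V$ and a large $N$ realizing $\eta_m$. Choose $y\in V$ generic for $\mu_X$; this already handles $\eta_m=\mu_X$ by \cref{thm:main}. For the other targets we perturb $y$ along the stable and weak-stable directions, i.e.\ by $g_{-t}$ and $h$-commuting perturbations of size tending to $0$ as $L\to\infty$. This perturbation steers $g_{2\log\sqrt{L}}h_Ly$ to a prescribed point on the periodic horocycle of height $s_0$ in cusp $i$, with the prescribed drift $c_0$ and offset $z_0$. Such steering is possible because $g_{\tau}h_L$ applied to a small neighbourhood of $y$ covers (by mixing/expansion of the unstable horocycle under $g_\tau$, together with density of the $AN$-orbits) any given compact region near the cusp, with the free parameters matching the two-parameter family plus $z_0$. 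For the Dirac targets $\nu^i_\infty$ we steer the renormalized point arbitrarily high into cusp $i$, so that the entire Gaussian window is spent near infinity.

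\textbf{Step 3 and main obstacle.} It remains to check that after the perturbation the windowed-average approximation still yields $\eta_m$ up to $1/m$. This uses the quantitative Erd\H{o}s--Kac statement together with uniform equidistribution of long pieces of nearly periodic horocycles, which is already needed for \eqref{eq:subs}. The main difficulty is the parameter matching in Step 2. We must show that the map from perturbations of $y$ to the triple $(s_0,c_0,z_0)$, as read off at scale $\sqrt L$, is onto an open set containing the target. Our first approach is to use explicit coordinates $x=\Gamma g$, with $g=h_u a_t \bar h_v$, and to compute $g_{2\log \sqrt L}h_{L+r\sqrt L}$. This shows that $c_0$ and $z_0$ vary linearly in the $\bar h$ and $h$ coordinates after rescaling by $\sqrt L$. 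The height $s_0$ would then be adjusted by choosing a time $L$ at which the geodesic ray of $y$ makes a cusp excursion of the required depth; such times exist for a generic $y$ by density of geodesic orbits. Finally, the subsequence of $N$ with $\log\log N$ close to such a time can be chosen freely, since $\log\log N$ takes all large values up to $o(1)$.
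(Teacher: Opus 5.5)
You have a genuine gap: the density step is never proved. Step 1 is fine once you use separate indices for the target and the tolerance. The finite $\Omega$-averages depend continuously on $x$, so each $U_{m,M}$ is open. Together with \eqref{eq:subs} and closedness of $Acc^\Omega(x)$, this reduces the theorem to density of each $U_{m,M}$.

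That density is the whole content of the theorem, and you yourself leave it open (``the main difficulty is the parameter matching''). The mechanisms you offer do not supply it.
\begin{itemize}
\item Perturbing $y$ along $g$ and $h$, the directions named in Step 2, only reparametrizes its own horocycle orbit, since $h_L(y\,h_ag_b)=g_b(h_{a+e^bL}y)$. So it cannot create the offset $c\approx c_0/\sqrt L$ in the $v$-direction from a closed horocycle of fixed period that \cref{lem:fini} needs. For the $\bar h$-perturbations of Step 3 you only assert that $c_0,z_0$ ``vary linearly''.
\item The image of a small neighbourhood under $z\mapsto z\,h_Lg_\tau$ has the same small measure, so it cannot cover a region. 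Mixing only lets it meet a \emph{fixed} open set, and you never show that the required configuration is a fixed open condition.
\item The height is not set by a cusp excursion of the geodesic ray of $y$. If $h_Ty=\gamma p\,v_c$ with $p$ of fixed period and $c\sqrt T\asymp 1$, then $g_{\log T}y$ lies in a fixed compact set; compare \cref{prop:per}, where such configurations are found exactly at returns of $g_{\log(DT_\ell)}x$ to $K$.
\item The same gap affects the $\nu^i_\infty$ targets.
\end{itemize}

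What is missing is an exact identity that turns the configuration into something you can produce inside an arbitrary open set.

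The paper does this backwards in \cref{lem:gauss} and \cref{lem:cusp}. There $h_{-T}$ maps the segment $\{v_{c'/\sqrt T+s'}p:|s'|<T^{-2/3}\}$ to within $o(1)$ of a full closed horocycle through $g_{-2\log|1-c'\sqrt T|}p$, of period $\asymp c'^2T$. By Sarnak's equidistribution of long closed horocycles, this meets every open set. Then \cref{lem:fini} applies at $N$ with $\log\log N$ near $T$.

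Your renormalization picture can also be made rigorous, but through an identity rather than through perturbations.
\begin{itemize}
\item Let $\mathsf w=\left(\begin{smallmatrix}0&-1\\1&0\end{smallmatrix}\right)$. For $\tau>0$ one has $\mathsf w h_\tau=h_{-1/\tau}g_{-2\log\tau}v_{1/\tau}$.
\item Hence if $g_{\log L}y=\Gamma p\,h_\theta g_s\mathsf w h_{-u}$, then $h_{uL+\tau}y=\Gamma p\,h_{\theta-Le^s/\tau}\,g_{s+\log L-2\log\tau}\,v_{1/\tau}$. For $\tau=e^{s/2}\sqrt L$ this is exactly the hypothesis of \cref{lem:fini}, with $a=1$ and $c=1/\tau$.
\item Points of this form with $s,u$ in small open intervals form an open set. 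Indeed $h_\theta g_s\mathsf w h_{-u}=h_\theta g_s v_u\mathsf w$, and $(\theta,s,u)\mapsto h_\theta g_s v_u$ is a diffeomorphism onto the open set of matrices with nonzero lower-right entry.
\item So any $y\in V$ with dense forward geodesic orbit enters this set at arbitrarily large times $\log L$, which gives density of $U_{m,M}$.
\end{itemize}

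This also removes your matching problem. We have $\nu^i_{s_0-2\log|1+c_0(r-z_0)|}=\nu^i_{\sigma-2\log|r-\rho|}$ with $\sigma=s_0-2\log|c_0|$ and $\rho=z_0-1/c_0$, so the family has only two real parameters. The first is fixed by $(s,u)$ and the height of $p$; the second is fixed by the choice of $\log\log N$. The measures $\nu^i_s$, $\nu^i_\infty$ and $\mu_X$ are limits of these, so they follow from closedness of $Acc^\Omega(x)$.
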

    
    A key ingredient in the proofs of each of these results is a transition to weighted ergodic averages. Leveraging the repetition of \( \Omega(n) \), we convert the ergodic averages to weighted averages of the form
    \[
        \frac{1}{\sum_{k =1}^{N} w_N(k)} \sum_{k =1}^{N} w_N(k) f(T^k x) .
    \]  
    These weight functions are largely supported in relatively small intervals, reflecting that the values of \( \Omega(n) \) concentrate in these windows. Specifically, the weight functions satisfy
    \begin{equation*}
        \frac{1}{\sum_{k=1}^N w_N(k)} \sum_{k \in I_N} w_N(k) 
        = 
        1 + o_N(1),
    \end{equation*}
    where \( I_N \) is an interval centered at \(\log\log N\) with length of order \( \sqrt{\log\log N}\). In the proof of \cref{thm:main}, we then construct a large measure set \(A\) consisting of points whose statistical behavior we control using ergodicity, as well as a subsequence \( (N_i)\) for which \( T^k x \in A \) for a large proportion of iterates in the windows \( I_{N_i} \). 
    
    In the proof of \cref{thm:horo}, we additionally require quantitative equidistribution results for the horocycle flow, including several quantitative estimates relating the distribution of horocycle orbits to the distance to periodic points. For instance, as part of our main technical lemma (see \cref{lem:fini}), we show that if \(x \) is a non-periodic point with quantitative bounds on the distance to a periodic point, a segment of the horocycle orbit \( \{h_t x\}_{t \in J}\) can be approximated by the sheared orbit of a nearby periodic point. These intervals \( J \) are chosen as subintervals of the interval \( I_N \) coming from the distribution of \( \Omega(n)\). We then use Fourier analysis to show that these sheared periodic orbits have the desired statistical properties.  \\

    {\bf Acknowledgments:} The idea behind \cref{thm:horoubo} was communicated to the authors by M. Einsiedler. The authors would like to thank M. Einsiedler for the insight. The authors would also like to thank Johannes Luber for a careful reading and number of useful corrections to the earlier version of the text.

\section{Preliminaries}

    \subsection{Measure-preserving Systems}
    \label{ss: MPS}
        By a \textit{topological dynamical system}, we mean a pair \((X,T)\), where \(X\) is a $\sigma$-compact metric space and \(T\) a continuous map on \(X\). A Borel probability measure \(\mu\) on \(X\) is called \textit{T-invariant} if \(\mu(T^{-1}A) = \mu(A)\) for all measurable sets \(A\). By the Bogolyubov-Krylov Theorem, every compact topological dynamical system admits at least one \(T\)-invariant measure. When a topological system \((X, T)\) admits only one such measure, \((X, T)\) is called \textit{uniquely ergodic}. When there is no ambiguity, we say the map \( T \) is uniquely ergodic. 
        
        By a \textit{measure-preserving dynamical system}, we mean a probability space \((X, \mathcal{B}, \mu)\), where \(X\) is a $\sigma$-compact metric space, \(\mathcal{B}\) the Borel \(\sigma\)-algebra on \(X\), and \( \mu \) a Borel probability measure, accompanied by a measure-preserving transformation \(T: X \to X\). We often omit the \(\sigma\)-algebra \(\mathcal{B}\) when there is no ambiguity. A measure-preserving dynamical system is called \textit{ergodic} if for any \(A \in \mathcal{B}\) such that \(T^{-1}A = A\), one has \(\mu(A) = 0\) or \(\mu(A) = 1\).   
        
        
        Let \(C_b(X)\) denote the space of continuous bounded functions on \(X\). A point \(x \in X\) is called \textit{generic for the measure \(\mu\)} if 
        \[
            \lim_{N \to \infty} \AVG f(T^n x) = \int f \, d\mu
        \]
        for all \(f \in C_b(X)\) and \textit{quasi-generic for \(\mu\)} if there exists a subsequence \( (N_i)_{i \in \N} \sse \N\) such that 
        \[
            \lim_{i \to \infty} \frac{1}{N_i} \sum_{n=1}^{N_i}f(T^n x) = \int f \, d\mu
        \]
        for all \(f \in C_b(X)\). In the language of \cref{thm:main}, generic points are those for which $Acc(x)$ is a singleton and a point is quasi-generic for \( \mu \) if \( \mu \in Acc(x) \). When \(\mu\) is ergodic, the set of generic points has full measure and when \( \mu \) is uniquely ergodic, every point is generic.\\

    \subsection{Symbolic Systems}
    \label{subsec: Symbolic Dynamics}
        Let $\mathcal{A}$ be a finite set of symbols. Let $\mathcal{A}^\Z$ denote the set of all biinfinite sequences with entries in $\mathcal{A}$. The set $\mathcal{A}^\Z$ is endowed with the product topology coming from the discrete topology on the alphabet $\mathcal{A}$. Denote an element in  $\mathcal{A}^\Z$ by $\mathbf{x} = (x_n)_{n \in \Z}$. One equivalent choice of metric generated by this topology is given by
        \[
            \text{d}(\mathbf{x}, \mathbf{y}) = 2^{-\min \{|n| \,:\, x_n \neq y_n\}}. 
        \]
        This space carries a natural homomorphism $\sigma: \mathcal{A}^\Z \to \mathcal{A}^\Z$, called the \textit{left shift}, defined by \( (\sigma \mathbf{x})_n = x_{n+1} \).

    \subsection{Convergence of Measures}
    \label{ss:measures}
        Let $X$ be a $\sigma$-compact metric space and let $M(X)$ denote the set of probability measures on $X$. For a continuous function \( f\) and \( \mu \in M(X) \), define
        \[
            \mu(f) := \int f \, d\mu.
        \]
        Let \( C_c(X) \) denote the set of  compactly supported continuous functions on \( X \). 
        \begin{defn} 
            Let $(\mu_N)_{N \in \N} \subset M(X)$. We say that:
            \begin{itemize} 
                \item $(\mu_N)$ {\em converges weak-$^*$} to $\mu$ if for every $f\in C_b(X)$, $\lim_{N\to  \infty} \mu_N(f) = \mu(f)$.
                \item $(\mu_N)$ {\em converges vaguely} to $\mu$ if for every $f\in C_c(X)$, $\lim_{N\to  \infty} \mu_N(f) = \mu(f)$.
            \end{itemize}
        \end{defn}
        It is clear that weak-$^*$ convergence implies vague convergence. Moreover, when $X$ is compact, these two notions are the same. For general $X$, vague convergence is strictly weaker than weak-$^*$ convergence. However, by a result of Prokhorov, vague convergence implies weak-$^*$ under the additional assumption of tightness. Recall that a sequence $(\mu_N)$ is {\em tight} if for every $\varepsilon>0$, there exists $N_\varepsilon \in \N$ and a compact set $K_\varepsilon \subset X$ such that for every $N\geq N_\varepsilon$, we have $\mu_N(K_\varepsilon)>1-\varepsilon$. 
        

    \subsection{Asymptotic Behavior of \( \Omega(n) \)}
    \label{subsec: Number Theory}
        The study of the asymptotic behavior of \( \Omega(n) \) has a rich history in multiplicative number theory, and such questions are often related to fundamental questions about the prime numbers. For instance, let \( \lambda(n) := (-1)^{\Omega(n)} \) denote the Liouville function. Then the Prime Number Theorem is equivalent to the assertion that
        \[
            \lim \AVG \lambda(n) = 0.
        \]
        In other words, asymptotically, \( \Omega(n) \) is even half the time. Further classical results include the Pillai-Selberg Theorem \cite{Pillai1940, Selberg1939} and Erd\H{o}s-Delange Theorem \cite{Erdos1946, Delange1958}. Hardy and Ramanujan \cite[Theorem C']{HR1917} showed that the normal order of \( \Omega(n) \) is \( \log \log n \). 
    
        \begin{theorem}[(Hardy-Ramanujan Theorem)]
        \label{thm: Hardy Ramanujan}
            For \(C > 0\), define \(g_C: \N \to \N\) by 
            \begin{equation}
            \label{eqn: HR}
                g_C(N) 
                = 
                \# \Big\{ n \leq N : |\Omega(n) - \log \log n| > C \sqrt{\log \log N} \Big\}.
            \end{equation}
            Then for all \(  \varepsilon > 0 \), there is some \( C_\varepsilon \geq 1 \) such that 
            \[
                \limsup_{N \to \infty} \frac{g_{C_\varepsilon}(N)}{N} \leq \varepsilon.
            \]
        \end{theorem}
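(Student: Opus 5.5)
We plan to prove the Hardy--Ramanujan Theorem (\cref{thm: Hardy Ramanujan}) with the Tur\'an second-moment method: bound the variance of \( \Omega(n) \) about \( \log\log N \) by \( O(\log\log N) \), then apply Chebyshev's inequality. First we replace \( \log\log n \) by \( \log\log N \). For \( \sqrt N < n \le N \) we have \( \log\log N - \log\log n \le \log 2 \), which is \( \le \sqrt{\log\log N} \) for large \( N \). The integers \( n\le \sqrt N \) contribute at most \( \sqrt N = o(N) \) to \( g_C(N) \). It therefore suffices to show that
\[
\#\{n\le N: |\Omega(n)-\log\log N| > (C-1)\sqrt{\log\log N}\} \le \frac{A}{(C-1)^2}\,N
\]
with an absolute constant \( A \), since then \( C_\varepsilon = 1+\sqrt{A/\varepsilon} \) works.

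To get this, we will show \( \sum_{n\le N} (\Omega(n)-\log\log N)^2 \le A N \log\log N \) and then apply Chebyshev. Write \( \Omega(n) = \sum_{p^a \mid n,\, a\ge1} 1 \) and split \( \Omega = \omega + (\Omega-\omega) \), where \( \omega \) counts distinct primes. The difference satisfies
\[
\sum_{n\le N} (\Omega(n)-\omega(n)) \le N\sum_p \sum_{a\ge2} p^{-a} = O(N),
\]
and similarly its second moment is \( \sum_{n\le N}(\Omega(n)-\omega(n))^2 = O(N) \), because \( \Omega-\omega \) has exponentially decaying tails in the average sense. This is routine, using \( \sum_{n\le N} 2^{\Omega(n)-\omega(n)} = O(N) \) via multiplicativity. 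So it is enough to handle \( \omega \).

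For \( \omega \), Mertens' theorem gives
\[
\sum_{n\le N}\omega(n) = \sum_{p\le N}\lfloor N/p\rfloor = N\log\log N + O(N).
\]
For the second moment,
\[
\sum_{n\le N}\omega(n)^2 = \sum_{p\ne q,\ pq\le N}\lfloor N/pq\rfloor + \sum_{p\le N}\lfloor N/p\rfloor \le N\Big(\sum_{p\le N}\tfrac1p\Big)^2 + N\log\log N + O(N),
\]
which is \( N(\log\log N)^2 + O(N\log\log N) \). Expanding the square then gives
\[
\sum_{n\le N}(\omega(n)-\log\log N)^2 = O(N\log\log N).
\]
Finally we combine this with the \( \Omega-\omega \) estimate using \( (a+b)^2\le 2a^2+2b^2 \), and apply Chebyshev.

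The only mildly delicate step is the second moment of \( \Omega-\omega \). We would control it using \( (\Omega-\omega)(n) \le \sum_{p^2\mid n} \nu_p(n) \). Then
\[
\sum_{n\le N}\Big(\sum_{p^2\mid n}\nu_p(n)\Big)^2 \le N\sum_{p,q}\ \sum_{a,b\ge2} \frac{ab}{\mathrm{lcm}(p^a,q^b)},
\]
which is a convergent sum, so the contribution is \( O(N) \). Everything else is Mertens' estimate \( \sum_{p\le x}1/p=\log\log x+O(1) \) and elementary counting.
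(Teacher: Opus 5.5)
Your proof is correct. It differs from the paper only in that the paper gives no proof: it quotes the statement from \cite[Theorem C']{HR1917}, with the remark that, by \cite[Theorem C]{HR1917}, centering at \(\log\log n\) or at \(\log\log N\) is equivalent.

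Your first step proves exactly that remark: you discard \(n\le\sqrt N\) and use \(0\le\log\log N-\log\log n<\log 2\) on \(\sqrt N<n\le N\). The rest is Tur\'an's variance method, and each ingredient checks out:
\begin{itemize}
\item the first moment \(\sum_{n\le N}\omega(n)=N\log\log N+O(N)\);
\item the upper bound \(\sum_{n\le N}\omega(n)^2\le N(\log\log N)^2+O(N\log\log N)\);
\item the pointwise bound \((\Omega-\omega)(n)\le\sum_{p^2\mid n}\nu_p(n)\), together with finiteness of \(\sum_{p,q}\sum_{a,b\ge2}ab/\mathrm{lcm}(p^a,q^b)\) (the \(p\neq q\) terms are \(\ll p^{-2}q^{-2}\), the \(p=q\) terms are \(\ll\sum_{m\ge2}m^3p^{-m}\ll p^{-2}\));
\item Chebyshev, giving \(\limsup_N g_C(N)/N\le A/(C-1)^2\).
\end{itemize}

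Compared with citing Hardy--Ramanujan, your argument is self-contained and needs only Mertens' estimate. Their original proof goes through uniform upper bounds for the number of integers with a given number of prime factors, closer in spirit to \cref{lem: Erdos piN Approx}. The price is a weak polynomial tail, \(C_\varepsilon\asymp\varepsilon^{-1/2}\), rather than the Gaussian-type decay available from Erd\H{o}s--Kac. The paper only uses the qualitative statement, so this loss does not matter.

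One aside in your write-up is false and should be removed: \(\sum_{n\le N}2^{\Omega(n)-\omega(n)}\) is not \(O(N)\). The prime \(2\) alone contributes
\[
\sum_{a\le\frac12\log_2 N}2^{a-1}\,\#\{n\le N:\ 2^a\,\|\,n\}\ \ge\ \sum_{a\le\frac12\log_2 N}\Big(\frac N4-2^{a-1}\Big)\ \gg\ N\log N .
\]
With base \(z\in(1,2)\) in place of \(2\) the remark becomes true and does give exponential tails for \(\Omega-\omega\). In any case you never need it, because your \(\mathrm{lcm}\) computation already proves \(\sum_{n\le N}(\Omega(n)-\omega(n))^2=O(N)\) directly.
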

    
        \begin{remark}
            By \cite[Theorem C]{HR1917}, it is equivalent to verify Equation \eqref{eqn: HR} with \( \log \log n\) replaced by \( \log \log N\). 
        \end{remark}
    
        Erd\H{o}s and Kac \cite{EK1940} later generalized \cref{thm: Hardy Ramanujan}, proving that \(\Omega(n)\) becomes normally distributed within such intervals.
    
        \begin{theorem}[(Erd\H{o}s-Kac Theorem)]
        \label{thm: Erdos-Kac}
            For \( A < B \in \R \), 
            \[
                \lim_{N \to \infty} \frac{1}{N} \# \Big\{ n \leq N \,:\, A \leq \frac{\Omega(n) - \log\log N}{\sqrt{\log\log N}} \leq B \Big\}
                = 
                \frac{1}{\sqrt{2\pi}} \int_{A}^B e^{-t^2/2} d t.
            \]
        \end{theorem}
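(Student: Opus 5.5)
Since this is the classical Erd\H{o}s--Kac theorem, which the paper cites from \cite{EK1940}, I would reprove it by the method of moments, in the style of Billingsley and Granville--Soundararajan. The first step is a reduction from \( \Omega(n) \) to \( \omega(n) \), the number of distinct prime factors. We have
\[
  \AVG (\Omega(n)-\omega(n)) \le \sum_{p}\sum_{k\ge 2} p^{-k} = O(1).
\]
By Markov's inequality, \( \Omega(n)-\omega(n) \le (\log\log N)^{1/4} \) for all but \( o(N) \) of the \( n\le N \). This shift is \( o(\sqrt{\log\log N}) \). Since the Gaussian distribution function is continuous, it therefore suffices to prove the statement for \( \omega \).

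Next I would truncate the primes. Fix \( y = y(N) = N^{1/\log\log\log N} \), or any \( y=N^{o(1)} \) with \( \log\log y = \log\log N + o(\sqrt{\log\log N}) \), and set \( \omega_y(n)=\#\{p\le y: p\mid n\} \). Each \( n\le N \) has at most \( \log\log\log N \) prime factors exceeding \( y \). Also \( \sum_{y<p\le N}1/p=O(\log\log\log N) \). So replacing \( \omega \) by \( \omega_y \), and \( \log\log N \) by \( \mu_y:=\sum_{p\le y}1/p \), changes the normalized quantity by \( o(1) \). Mertens' theorem gives \( \mu_y=\log\log N+O(\log\log\log N) \).

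The core step is a moment comparison. Write \( \omega_y(n)-\mu_y=\sum_{p\le y}(\mathbf 1_{p\mid n}-1/p) \). For fixed \( k \), expand
\[
  \AVG\Big(\omega_y(n)-\mu_y\Big)^k .
\]
It becomes a sum over tuples \( p_1,\dots,p_k\le y \) of averages of products \( \prod_j(\mathbf 1_{p_j\mid n}-1/p_j) \). Each such product is a combination of indicators \( \mathbf 1_{d\mid n} \) with \( d\le y^k=N^{o(1)} \). The identity \( \AVG \mathbf 1_{d\mid n}=1/d+O(1/N) \) then shows that the average equals the expectation of the corresponding product of independent Bernoulli\( (1/p) \) variables \( X_p \), up to an error
\[
  O\Big(N^{-1}\big(\textstyle\sum_{p\le y}2\big)^k\Big)=O\big(y^{k}/N\big)=o(1).
\]
So the \( k \)-th moment matches that of \( S_y=\sum_{p\le y}(X_p-1/p) \). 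The variance of \( S_y \) is \( \sigma_y^2=\sum_{p\le y}\frac1p(1-\frac1p)\sim\log\log N \). The summands of \( S_y \) are bounded and independent with variance tending to infinity, so the normalized moments of \( S_y/\sigma_y \) converge to the Gaussian moments. This follows either from a direct combinatorial count, where only pairings contribute to leading order, or from the CLT combined with uniform integrability.

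Finally, the normal distribution is determined by its moments, so convergence of moments gives convergence in distribution of \( (\omega_y(n)-\mu_y)/\sigma_y \) under the uniform measure on \( \{1,\dots,N\} \). The reductions above then transfer this to \( (\Omega(n)-\log\log N)/\sqrt{\log\log N} \) and give the stated limit for every interval \( [A,B] \). I expect the main obstacle to be the moment comparison: the truncation parameter \( y \) must be chosen so that \( y^k=o(N) \) for every fixed \( k \), while the discarded primes in \( (y,N] \) still contribute only \( o(\sqrt{\log\log N}) \). The choice \( y=N^{1/\log\log\log N} \) satisfies both requirements.
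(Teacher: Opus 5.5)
The paper does not prove this statement; it quotes it as a classical result from \cite{EK1940}, so there is no internal argument to compare against. Your proposal is a correct sketch of the standard moment-method proof (Billingsley, Granville--Soundararajan), and each step checks out:
\begin{itemize}
\item The reduction from $\Omega$ to $\omega$ via Markov's inequality is valid.
\item The truncation at $y=N^{1/\log\log\log N}$ works. In fact $\sum_{y<p\le N}1/p$ and $\mu_y-\log\log N$ are only $O(\log\log\log\log N)$, which is better than you claim.
\item The moment comparison has error $O\bigl((2\pi(y))^k/N\bigr)=o(1)$ for each fixed $k$.
\item The pairing count gives $\mathbb{E}[S_y^k]/\sigma_y^k\to(k-1)!!$ or $0$.
\item Fr\'echet--Shohat (the normal law is moment-determined) and a Slutsky-type transfer finish the argument.
\end{itemize}

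There are two other routes worth contrasting with yours. Erd\H{o}s and Kac originally compared the truncated $\omega$ with a sum of independent variables via Brun's sieve and then invoked the Lindeberg CLT rather than moments. Within this paper, the statement can also be read off from the cited uniform asymptotic \cref{lem: Erdos piN Approx} together with Stirling's formula. There, $\pi_N(k)/N$ is a Poisson$(\log\log N)$ mass at $k-1$, hence equals $(2\pi\log\log N)^{-1/2}e^{-(k-\log\log N)^2/(2\log\log N)}(1+o(1))$ uniformly on the window. Summing over $k$ with $(k-\log\log N)/\sqrt{\log\log N}\in[A,B]$ is then a Riemann sum for the Gaussian integral.

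Your route is self-contained and elementary: it needs only $\lfloor N/d\rfloor=N/d+O(1)$ and Mertens, not the much deeper Sathe--Selberg-type estimate. However, it only delivers the global law at scale $\sqrt{\log\log N}$. The local route is what the paper actually relies on later, for example in \eqref{eqn: approximated form}. There $\pi_N(k)/N$ is replaced pointwise by $G_N(k)/\sqrt{2\pi\log\log N}$ against an arbitrary bounded sequence $f(T^kx)$ on short windows, and Erd\H{o}s--Kac alone cannot supply that.
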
 
        
        Thus, the Erd\H{o}s-Kac Theorem states that for large \(N\), the sequence \( (\Omega(n))_{n \leq N} \) becomes roughly normally distributed with mean and variance \(\log \log N\). 
        
    

        In this paper, we convert weighted ergodic averages to those involving weight functions encoding information about the distribution of \(k\)-almost primes. Given \( k \in \N\), let \( \P_k\) denote the set of \(k\)-almost primes, those integers satisfying \( \Omega(n) = k \). Let \( \pi_N(k) \) denote the number of \(k\)-almost primes less than or equal to \( N \).  Then 
        \[
            \pi_N(k) = \# \{ n \leq N \,:\, \Omega(n) = k\} = \sum_{n \in \P_k \cap [N]} 1.
        \]
        Erd\H{o}s \cite[Theorem II]{Erdos1948} provides a uniform estimate for \( \pi_N(k)\) for certain values of \( k \): For \( N \in \N \), define the interval \( I_N \) by 
        \[
            I_N := \Big[ \log \log N - C \sqrt{\log \log N} ~,~ \log \log N + C \sqrt{\log \log N} 
            \Big]   ,
        \]
        where \(C \geq 1\) is the constant guaranteed by \cref{thm: Hardy Ramanujan}. 
        
        \begin{lemma}[\cite{Erdos1948}] 
        \label{lem: Erdos piN Approx}
            The function \( \pi_N(k) \) satisfies  
            \[
                \pi_N(k) = \frac{N}{\log N} \cdot \frac{(\log\log N)^{k-1}}{(k-1)!} (1 + o_{N \to \infty}(1))
            \]
            uniformly for \( k \in I_N \). 
        \end{lemma}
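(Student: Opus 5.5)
The plan is to prove this via the Selberg--Delange method, in the form of the Sathe--Selberg theorem, specialised to the narrow window \(k \in I_N\). The point is that in this window the relevant saddle point tends to \(1\), where all the correction factors equal \(1\). Write \(L = \log\log N\).

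\textbf{Step 1: the generating function.} I would establish the mean-value estimate
\[
    S_N(z) := \sum_{n \leq N} z^{\Omega(n)} = \frac{N (\log N)^{z-1}}{\Gamma(z)}\, G(z)\,\bigl(1 + O_R(1/\log N)\bigr),
    \qquad
    G(z) = \prod_p \Bigl(1 - \frac{z}{p}\Bigr)^{-1}\Bigl(1 - \frac{1}{p}\Bigr)^{z},
\]
uniformly for \(|z| \leq R\) with a fixed \(R < 2\); the restriction \(R<2\) avoids the pole of the Euler factor at \(p=2\). For this I would compare the Dirichlet series \(\sum_n z^{\Omega(n)} n^{-s} = \prod_p (1 - z p^{-s})^{-1}\) with \(\zeta(s)^z\). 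The quotient is analytic and bounded on \(\Re s > 1/2 + \delta\) whenever \(|z|<2\). A Perron formula with a Hankel contour around \(s=1\), using the zero-free region of \(\zeta\), then gives the estimate. Note that \(G(1) = \Gamma(1) = 1\).

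\textbf{Step 2: Cauchy's formula and the saddle point.} By Cauchy's formula,
\[
    \pi_N(k) = \frac{1}{2\pi i}\oint_{|z| = r} \frac{S_N(z)}{z^{k+1}}\,dz ,
\]
and I would take \(r = (k-1)/L\). For \(k \in I_N\) this gives \(r = 1 + O(C/\sqrt{L})\), so in particular \(r \in [1/2, 3/2]\) for large \(N\) and Step 1 applies on the contour. Substituting \(z = re^{i\theta}\), the factor \((\log N)^{z} z^{-k} = \exp\bigl(L z - k \log z\bigr)\) has modulus \(\exp\bigl(Lr\cos\theta - k\log r\bigr)\), which is maximised at \(\theta = 0\) and decays like \(e^{-c L \theta^2}\). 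I would then split the contour:
\begin{itemize}
    \item On \(|\theta| \leq L^{-1/2}\log L\), expand \(H(z) := G(z)/\Gamma(z)\cdot z^{-1}\) to first order around \(z=r\). The Gaussian integral then reproduces
    \[
        \frac{N}{\log N}\cdot\frac{L^{k-1}}{(k-1)!}\cdot H(r)\,\bigl(1 + O(1/\sqrt{L})\bigr),
    \]
    because the same integral with \(H \equiv 1\) is computed exactly by \(\frac{1}{2\pi i}\oint e^{Lz} z^{-k}\,dz = L^{k-1}/(k-1)!\).
    \item On the complementary range \(|\theta| > L^{-1/2}\log L\), the decay factor gives a contribution that is \(o\bigl(L^{k-1}/(k-1)!\bigr)\) times \(N/\log N\).
\end{itemize}

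\textbf{Step 3: conclusion.} Since \(r \to 1\) uniformly for \(k \in I_N\), continuity of \(G\) and \(1/\Gamma\) gives \(H(r) = 1 + o(1)\) uniformly. This yields the stated asymptotic with an error \(o_{N\to\infty}(1)\) that is uniform in \(k \in I_N\).

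\textbf{Main obstacle.} The delicate point is uniformity in Step 2. The error in Step 1 must be controlled uniformly on the whole circle \(|z| = r\), not only near the real axis. The Gaussian approximation must also be made with an error relative to \(L^{k-1}/(k-1)!\), which is exponentially large in \(L\), rather than in absolute terms. My first attempt would be to write \(H(z) = H(r) + O(|z-r|)\) and bound the linear term by \(\int |\theta|\, e^{-cL\theta^2}\,d\theta \ll L^{-1}\), against a main term of size \(L^{-1/2}\); this gives a relative error of \(O(L^{-1/2})\). A cruder alternative, closer to Erd\H{o}s's original argument, would be induction on \(k\) via
\[
    \pi_N(k) \approx \frac{1}{k}\sum_{p} \pi_{N/p}(k-1)
\]
combined with Mertens' theorem. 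However, keeping the errors uniform over \(\asymp \sqrt{L}\) values of \(k\) along the induction seems harder, so I would use the analytic route.
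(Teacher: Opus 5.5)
The paper gives no proof of this lemma; it quotes it from Erd\H{o}s's 1948 paper, whose argument is elementary (closer in spirit to the recursion in $k$ you mention at the end). Your proposal is correct and takes a genuinely different route. It is the Selberg--Delange method, essentially Selberg's proof of the Sathe--Selberg formula
\[
\pi_N(k)=\frac{N}{\log N}\cdot\frac{L^{k-1}}{(k-1)!}\Bigl(F\bigl(\tfrac{k-1}{L}\bigr)+O\bigl(k/L^{2}\bigr)\Bigr),\qquad F(z)=\frac{G(z)}{\Gamma(z+1)},
\]
specialised to the window $k\in I_N$, where $(k-1)/L\to 1$ and $F(1)=1$. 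Your $H$ is exactly this $F$. Your saddle-point bookkeeping checks out:
\begin{itemize}
\item with $r=(k-1)/L$ the main arc gives $\frac{N}{\log N}\frac{L^{k-1}}{(k-1)!}\bigl(H(r)+O(L^{-1/2})\bigr)$;
\item the tail is smaller by a factor $e^{-c(\log L)^2}=o(L^{-1/2})$;
\item $H(r)=1+O_C(L^{-1/2})$.
\end{itemize}
Compared with citing Erd\H{o}s, your route buys an explicit rate $1+O_C(L^{-1/2})$. If you keep the factor $F$, it is also valid in the much wider range $k\le(2-\delta)L$. The price is the contour-integral machinery behind Step 1. The elementary argument gives only an $o(1)$ in a narrow window, but that is all the paper uses: it immediately converts $\pi_N(k)/N$ into the Gaussian weight via Stirling.

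Step 1 needs two small repairs, neither of which affects the argument.

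First, the error must be additive, not multiplicative:
\[
S_N(z)=N(\log N)^{z-1}\bigl(G(z)/\Gamma(z)+O(1/\log N)\bigr).
\]
The reason is that $1/\Gamma$ vanishes at $z=-1$, which lies on or right next to your circle $|z|=r$ with $r\approx 1$. The multiplicative form would force $S_N(-1)=\sum_{n\le N}\lambda(n)$ to be $0$. For the tail you only need $|S_N(z)|\ll N(\log N)^{\Re z-1}$, which the additive form still gives.

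Second, for $|z|\le R$ the quotient $\prod_p(1-zp^{-s})^{-1}(1-p^{-s})^{z}$ is analytic only for $\Re s>\max(1/2,\log_2 R)$, not on all of $\Re s>1/2+\delta$. When $|z|$ is near $2$ and $\Re s$ is near $1/2$, the factors at $p=2$ and $p=3$ have poles. Since you only need $R=3/2$, this still leaves a half-plane reaching to the left of $\Re s=1$, which is all the Hankel-contour argument requires.
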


\section{Proof of Theorem \ref{thm:main}}
\label{sec:mainproof}
       
    \begin{proof}[Proof of \cref{thm:main}]
        Let \( (X, T) \) be a measure-preserving system and let \( x \in X\). Suppose \( \mu \in Acc(x) \cap M^e(X,T) \). We want to construct a subsequence \( (N_i)_{i \in \N} \sse \N \) such that 
        \begin{equation}
        \label{eqn: goal}
            \kappa_i := \frac{1}{N_i} \sum_{n \leq N_i} \delta_{T^{\Omega(n)} x}   \text{ is a tight sequence } 
        \end{equation}
        and 
        \begin{equation}
        \label{eqn: goal2}
            \lim_{i \to \infty} \kappa_i(f) = \mu(f) \text{ for all } f \in C_c(X).
        \end{equation}
        Equation \eqref{eqn: goal2} then demonstrates $(\kappa_i)$ converges vaguely to $\mu$, so that by Prokhorov's Theorem, tightness of the sequence implies weak convergence. 
   
        We first perform several reductions. By the separability of \( C_c(X) \), there exists a dense collection \( \{f_j\}_{j =1}^\infty \sse C_c(X) \). By a straightforward density argument, we need only find a subsequence satisfying Equation \eqref{eqn: goal2} for this countable collection. Let $\{B_\ell\}_{\ell \in \N}$ be a family of compact balls such that $\mu(B_\ell)\geq 1-\frac{1}{\ell}$. It is enough to show that for each \( m \in \N\), there exists a sequence  \( (N_i^m)_{i \in \N} \sse \N \) and \( i_m \in \N \) such that for all \( i \geq i_m\), \( j \leq m\),
        \begin{equation}
        \label{eqn: reduction 1}
            \left| \frac{1}{N_i^m} \sum_{n \leq N_i^m} f_j (T^{\Omega(n)}x) - \int f_j \,d\mu \right| < \frac{1}{m},
        \end{equation}
        and for all $i\geq i_m$,  $\ell\leq m$,
        \begin{equation}\label{eq:tight}
         \left| \frac{1}{N_i^m} \sum_{n \leq N_i^m} \delta_{T^{\Omega(n)}x}(B_\ell)\right|\geq 1-\frac{10}{\ell}.
        \end{equation}
        We then define our subsequence \( (N_k) \) as follows: Set \( N_1 = N_{i_1}^1\). For \( k > 1\), set \( N_k = N_{i'_k}^k\), where \( i'_k \geq i_k \) is chosen large enough so that \( N_{i'_k}^k \geq N_{k-1} \). Then for each \( f_j \) and any \( \varepsilon > 0\), there exists \( k_j \geq j\) such that \( \frac{1}{k_j} < \varepsilon\) and for all \( k \geq k_j\),
        \[
            \left| \frac{1}{N_k} \sum_{n \leq N_k} f_j(T^{\Omega(n)}x) - \int f_j \,d\mu \right| < \frac{1}{k_j} < \varepsilon
        \]
        and, moreover, 
        \[
            \left| \frac{1}{N_k} \sum_{n \leq N_k} \delta_{T^{\Omega(n)}x}(B_\ell)\right|\geq 1-\varepsilon
        \]
        for $\ell \geq \frac{1}{10\varepsilon}$, yielding the result. 
        
        Then, to this end, let \( m \in \N \) and consider the collections \( \{f_j\}_{j \leq m} \) and  $\{B_\ell\}_{\ell \leq m}$. For $j\leq m$, let $h_j\in C(B_j) \sse C_c(X) $ be a positive function such that $\mu(h_j) \geq 1-\frac{2}{j}$. 
        Let $f\in \{f_j\}_{j \leq m}\cup \{h_j\}_{j \leq m}$. Regrouping by the value of \( \Omega(n)\), we reformulate the ergodic averages along \( \Omega(n) \) as weighted Birkhoff averages: for \( N \in \N \), 
        \begin{equation}
        \label{eqn: weighted form}
            \AVG f(T^{\Omega(n)}x) =  \sum_{k \leq N} \frac{\pi_N(k)}{N} f(T^k x). 
        \end{equation}
        
        Define \( G_N(k) := e^{-(k-\log \log N)^2/ 2\log \log N}\). Applying \cref{thm: Hardy Ramanujan} and \cref{lem: Erdos piN Approx}, in combination with Stirling's formula, it is straightforward to show that for every $\varepsilon>0$, there exists \( C_\varepsilon > 1\) such that  
        \begin{equation}
        \label{eqn: approximated form}
            \sum_{k \leq N} \frac{\pi_N(k)}{N} f(T^k x) = \frac{1}{\sqrt{2\pi \log \log N}}\sum_{k \in I_N} G_N(k) f(T^k x) + {o}_{\varepsilon \to 0}(1),
        \end{equation}

        \noindent where \( I_N = \log \log N+ [- C_\varepsilon \sqrt{\log \log N} \,,\, C_\varepsilon \sqrt{\log \log N}]\). Note \( C_\varepsilon \) does not depend \(f\) or \(N\). 
        
        We now partition the interval \(I_N\) and aim to use quasi-genericity of the left endpoints of these smaller sub-intervals to obtain the result. However, without the assumption of unique ergodicity, we cannot control the statistical behavior of chosen points. Thus, we introduce a large-measure set on which we do have such control. 
        
        Let \( \varepsilon > 0\).  By Egorov's theorem and ergodicity of \( \mu\), there exists a set \( A_m \sse X \) with \( \mu(A_m) > 1 - \varepsilon \) and an integer \( H_m \in \N \) such that for all \( y \in A_m \), \( f\in \{f_j\}_{j \leq m}\cup \{h_j\}_{j \leq m} \), and \( H \geq H_m \), 
        \[
            \left| \frac{1}{H} \sum_{h \leq H} f(T^h y)  - \int f d\mu \right| < \varepsilon.
        \]
        
        We now alter the set \( A_m \) to be sufficiently nice in order to apply quasi-genericity of \( x\) to \( \mathbbm{1}_{A_m}\): Take a compact set \( K \sse X \) such that \( \mu(K) > 1 - \varepsilon\). Choose \( 0 < \delta < 1\) and cover \( K \) by a finite union of \( \delta-\)balls:
        \[
            K \sse \bigcup_{i=1}^{K_0} B_\delta(y_i)
        \]
        for some \( y_i \in K\). Define 
        \(
            A'_m:= \bigcup_{i \in J} B_{\delta}(y_i),
        \)
        where 
        \[
            J = \left\{ i \leq K_0 \,:\, B_\delta(y_i) \cap A_m \neq \emptyset \right\}. 
        \]
        Since  
        \(
            \mu(K \cap A_m) \geq 1 - 2 \varepsilon,
        \)
        we have 
        \(
            \mu (A_m') 
            \geq 
            \mu(K \cap A_m) 
            \geq 
            1- 2 \varepsilon.
        \)
        Now take \( \delta = \delta(\varepsilon, m, H_0)\) small enough such that for any \( y \in A_m'\),  \( f\in \{f_j\}_{j \leq m}\cup \{h_j\}_{j \leq m} \),
        \begin{equation}\label{eq:H00}
            \left|\frac{1}{H_0} \sum_{h \leq H_0} f (T^n y) - \int f \,d\mu \right|  
            < 
            \varepsilon.
        \end{equation}
        
        Since \( x \) is quasi-generic for \( \mu \), there is a sequence \( ( M_i)\) such that 
        \[
            \lim_{i \to \infty} \frac{1}{M_i} \sum_{n \leq M_i } g(T^n x) 
            = 
            \int g \, d\mu
        \] 
        for all \( g \in C_c(X) \). By the openness of \( A_m' \sse X\), there exists \( (g_k) \sse C_c(X) \) approximating \( \mathbbm{1}_{A_m'}\) from below. Hence 
        \begin{equation}
        \label{eqn: quasigen property}
            \lim_{i \to \infty} \frac{1}{M_i} \sum_{n \leq M_i} \mathbbm{1}_{A_m'}(T^n x) 
            \geq 
            \lim_{k \to \infty} \int g_k \,d\mu 
            = 
            \mu(A_m') 
            > 
            1 - 2 \varepsilon. 
        \end{equation}
        By taking a subsequence, we can assume that $M_{i+1}\geq M_i^2$ for all $i\in \N$. Take \( i_0 \in \N \) such that for \( i \geq i_0 \),
        \begin{equation}
        \label{eqn: using genericity}
            \frac{1}{M_i} \sum_{n \leq M_i}  \mathbbm{1}_{A_m'}(T^n x) > 1 - 3\varepsilon,
        \end{equation}
        and set \( N_1 = M_{i_0}\). Then 
        \[
            \frac{2}{N_1} \sum_{n = N_1/2}^{N_1} \mathbbm{1}_{A_m'}(T^n x) > 1 - 6\varepsilon,
        \]
        else contradicting Equation \eqref{eqn: using genericity}. By pigeonhole, we can then find an integer \( N_1/2 < \ell_1 < N_1 \) such that the interval 
        \(
            I_{\ell_1} := [ \ell_1 - C_\varepsilon \sqrt{\ell_1} \,,\, \ell + C_\varepsilon \sqrt{\ell_1}]
        \)
        satisfies
        \[
            \frac{1}{2C_\varepsilon \sqrt{\ell_1}} \sum_{k \in I_{\ell_1}} \mathbbm{1}_{A_m'} (T^k  x) > 1 - 6 \varepsilon.
        \]
        Now, define  \( N_i = M_{i_0+i-1}\). We can find an integer \( N_i/2 < \ell_i < N_i \) satisfying an analogous condition. Continuing in this way, we obtain an increasing sequence \( (\ell_j)\) such that for all \( j \in \N \),
        \begin{equation}\label{eq:las}
            \frac{1}{2C_\varepsilon \sqrt{\ell_j}} \sum_{k \in I_{\ell_j}} \mathbbm{1}_{A_m'} (T^k  x) > 1 - 6 \varepsilon.
        \end{equation}
        Divide \( I_{\ell_j} \) into \( \frac{2C_\varepsilon \sqrt{\ell_j}}{H_0} \) sub-intervals of length \( H_0 \), call \( J_s \), \( 1 \leq s \leq 2C_\varepsilon \sqrt{\ell_i}/H_0\). 
        
        Let \( u_{s} \) denote the left-endpoint of \( J_s\). Then by Equation \eqref{eq:las} there is an integer \( k_0 \leq H_0 - 1 \) such that the endpoints \(u'_s:=u_s +k_0\) for the shifted intervals \( J'_s=J_s + k_0 \) satisfy
        \begin{equation}
        \label{eqn: left endpoints}
            \frac{H_0}{2C \sqrt{\ell_i}} \sum_{s = 1 }^{2C \sqrt{\ell_i}/H_0} \mathbbm{1}_{A_m'}(T^{u'_s}x) > 1 - 6 \varepsilon.        
        \end{equation}
        Define a subsequence \( (\tilde{N}_i )\) by \( \log \log \tilde{N}_i = \ell_i + k_0\). Then Equation \eqref{eqn: approximated form} implies that for any $f\in \{f_j\}_{j=1}^m\cup \{h_j\}_{j=1}^m$,
        \begin{align*}
            \frac{1}{\tilde{N}_i} \sum_{n=1}^{\tilde{N}_i} f(T^{\Omega(n)}x) 
            &= \frac{1}{\sqrt{2\pi \ell_i}} \sum_{k \in I_{\ell_i}+k_0} G_{\tilde{N}_i}(k) f(T^k x) + {\rm O}(\varepsilon)
            \\
            &= 
            \frac{1}{\sqrt{2 \pi \ell_i}} \sum_{s = 1}^{2C\sqrt{\ell_i}/H_0} \sum_{k = 0}^{H_0-1} G_{\tilde{N}_i}(u'_s) f(T^{k} (T^{u'_s}x)) + {\rm O}(\varepsilon).
        \end{align*}
        Note that for \( k \in J'_s\), we have replaced \( G_{\tilde{N}_i}(k) \) by \(  G_{\tilde{N}_i}(u'_s)\) using uniform continuity of \( e^{-k^2/2}\). Set
        \[
            E_1 := \Big\{ s \leq \frac{2C_\varepsilon \sqrt{\ell_i}}{H_0} \,:\, T^{u'_s}x \in A_m' \Big\}
            \quad \quad \text{and} \quad \quad 
            E_2 := E_1^c \cap [1, 2C_\varepsilon \sqrt{\ell_i}/H_0].
        \]
        We first estimate the sum over \( E_1\). For $u'_s\in E_1$, Equation \eqref{eq:H00} holds for $y=T^{u'_s}x$. Then,
        \begin{align*}
            \left| \frac{H_0}{\sqrt{2 \pi \ell_i}} \sum_{s \in E} G_{\tilde{N}_i}(u'_s) \frac{1}{H_0} \sum_{k = 0}^{H_0-1} f(T^{k}(T^{u'_s}x))-\int_X f d\mu \right| 
            & < 
            \frac{\varepsilon H_0}{\sqrt{2 \pi \ell_i}} \sum_{s \in E} G_{\tilde{N}_i}(u'_s)
            \\
            &\leq \frac{\varepsilon H_0}{\sqrt{2 \pi \ell_i}}\frac{2}{H_0} \sum_{w\in I_{\tilde{N}_i}} G_{\tilde{N}_i}(w)\\
            &\leq 4\varepsilon.
        \end{align*}
        For \( E_2\), by Equation \eqref{eqn: left endpoints}, we get $H_0|E_2|<6\varepsilon \cdot 2C_\varepsilon \sqrt{\ell_i}$, and so 
        \begin{align*}
            \Big| \frac{H_0}{\sqrt{2 \pi \ell_i}} \sum_{s \in E_2} G_{\tilde{N}_i}(u'_s) \frac{1}{H_0} \sum_{k = 0}^{H_0-1} f(T^{k}(T^{u'_s}x)) \Big|
            &
            \leq 
            \frac{H_0 \norm{f}_\infty}{\sqrt{2\pi \ell_i}} \sum_{s \in E_2} G_{\tilde{N}_i}(u'_s)  \\
            & 
            \leq 
            \frac{2H_0 \norm{f}_\infty}{\sqrt{2 \pi \ell_i}}\cdot \sum_{w=1}^{|E_2|} G_{\tilde{N}_i}(\log\log \tilde{N_i}+wH_0)\leq \\&\frac{4H_0 \norm{f}_\infty}{\sqrt{2 \pi \ell_i}}\cdot \frac{1}{H_0}\sum_{w=1}^{H_0|E_2|} G_{\tilde{N}_i}(\log\log \tilde{N_i}+w)
            \\
            &\leq 
            8\varepsilon \|f\|_\infty.
        \end{align*}
        Summarizing, for all  \( f\in \{f_j\}_{j \leq m}\cup \{h_j\}_{j \leq m} \),
        \[
            \left|\frac{1}{\tilde{N}_i} \sum_{n=1}^{\tilde{N}_i} f(T^{\Omega(n)}x)  -\int f \,d\mu \right|= {\rm O}(\varepsilon).
        \]
        Taking $\varepsilon$ much smaller than $1/m$ and $f=f_j$, we obtain Equation \eqref{eqn: reduction 1}, and taking $f=h_j$, we obtain Equation \eqref{eq:tight}, finishing the proof.
         
        \end{proof}

\subsection{Necessity of Ergodicity in \cref{thm:main}}
\label{sec:counterex}

    In this section, we prove \cref{prop:counterex}, demonstrating that \cref{thm:main} does not hold removing the condition of ergodicity. In other words, we find a non-ergodic measure that is visible along standard averages, yet not along \(\Omega\)-averages. In fact, we can take the point \( x \in X\) to be generic for this measure. 

    \begin{proof}[Proof of \cref{prop:counterex}] 
        Let $(X,\sigma)$ be the full shift on the alphabet $\{0,1,2\}$ and let \( \mu = \frac{1}{3}(\delta_{\overline{0}} + \delta_{\overline{1}} + \delta_{\overline{2}})\), where \( \overline{j}\) denotes the sequence with each entry equal to \( j \in \{0,1,2\}\). We construct the point $x \in X$ as follows. Set \( N_0 = 0\) and \(N_1 = 100 \). For \( i \geq 2 \), set \( N_{i} = \floor{N_{i-1}^{4/3}}\). Partition each interval $I_i = [N_i,N_{i+1}]$ into intervals $\{J^i_s\}_{s=1}^{K_i}$ of length $\floor{N_{i+1}^{2/3}}$ (with the last one possibly shorter). We then define the sequence \( \mathbf{x} = (x_n)\) by
        \[
            x_n = 
            \begin{cases}
                0 & n < 0\\
                j & n \in J^i_s, \ s \equiv j \mod 3
            \end{cases}
            .
        \]
        
        Let \( f \in C(X) \). Since each value \( j \in \{0,1,2\}\) appears with asymptotic frequency \( 1/3\) in the definition of \( \mathbf{x}\),
        \begin{align*}
            \left| \frac1{N_{i+1}} \sum_{n \leq N_{i+1}} f(\sigma^n \mathbf{x}) - \int f \, d\mu \right| 
            &= 
            \left| \frac1{N_{i+1}} \sum_{n = N_i}^{N_{i+1}} f(\sigma^n \mathbf{x}) - \frac13 (f(\overline{0}) + f(\overline{1}) + f(\overline{2}))\right| + o(1)\\
            &=
            \frac1{N_{i+1}}\sum_{j = 0}^2  \sum_{n \in J_s, s \equiv j \mod 3}\left| f(\sigma^n \mathbf{x}) - f(\overline{j}) \right| + o(1).
        \end{align*}
        
        Let \( \varepsilon > 0\) and take \( K > 0 \) such that if \( \mathbf{y} \in X \) has its \( K \) entries around 0 equal to \( j \), then \( |f(y) - f(\overline{j})| < \varepsilon\). Notice that for \( n \in J_s^i\), \( \sigma^n \mathbf{x} \) satisfies this for some \( j \in \{0,1,2\} \) except on a set of size at most \( 2K\). Then 
        \begin{align*}
            \frac1{N_{i+1}} \sum_{j = 0}^2  \sum_{n \in J_s, s \equiv j \mod 3}\left| f(\sigma^n \mathbf{x}) - f(\overline{j}) \right| = O(N_{i+1}^{-2/3}) + o_{\varepsilon \to 0}(1).
        \end{align*}
        This implies that $x$ is generic for $\mu$, so that $Acc(x)= \{\mu\}$. On the other hand, using Theorem \ref{thm: Hardy Ramanujan}, we have
        \[
            \frac{1}{N}\sum_{m\leq N} f(T^{\Omega(m)}\mathbf{x}) = \frac1N \sum_{k \in I_N} \pi_N(k) f(T^k \mathbf{x}) + o(1)       
        \]
        for any \( f \in C(X) \). Recall the interval \( I_N \) is centered at \( \log \log N \) with size \( \rm{O}(\sqrt{\log \log N})\). However, for any $M$ large enough, the entries of \( \mathbf{x} \) in a window of shape 
        \[
            [ M - M^{1/2+1/100}, M + M^{1/2+1/100}]
        \]
        will have at most two of the symbols $\{0,1,2\}$ (for most times, such a window will contain only one symbol, though at transitions, it may contain two). Therefore,
        $\frac{1}{M}\sum_{m\leq M}\delta_{T^{\Omega(m)}x}$ cannot converge to $\mu$. This finishes the proof.
    \end{proof}
  
    \begin{remark}
        This construction can be adapted to any sequence $(a_n)\subset \N$ for which there exists a sequence of intervals $\{I_N\}$ satisfying $|I_N| = o(N)$ and \( |\{n\leq N\;:\; a_n\notin I_N\}| = o(N).\)
    \end{remark}

\section{Background on Horocycle flows}
    Let $G=PSL(2,\R)$ and $\Gamma \subset G$ a non-uniform lattice. Set $X=\Gamma \backslash G$ and let \( \mu_X \) denote the Haar measure on \( X \). Let $h_t=\begin{pmatrix}1&t\\0&1 \end{pmatrix}$ be the horocycle flow acting by right multiplication: 
    \begin{equation}\label{eq:hor}
        h_t(\Gamma x)=\Gamma x \cdot h_t.
    \end{equation}
    Let
    \[
        g_s=\begin{pmatrix}e^{s/2}&0\\0&e^{-s/2} \end{pmatrix}\;\; \text{ and }\;\; v_t=\begin{pmatrix}1&0\\t&1 \end{pmatrix}
    \]
    denote the geodesic and complementary horocycle flow, respectively. The flows $(g_s)$ and $(v_t)$ act on $X$ analogously to Equation \eqref{eq:hor}. Recall the following renormalization relations: for all \(s,t\in \R\),
    \begin{equation}\label{eq:norm}
    h_t \cdot g_s=g_s\cdot  h_{e^{-s}t}\;\;\;\text{ and }\;\;\; v_t\cdot g_s=g_s\cdot v_{e^{s}t}.
    \end{equation} 
    It is known that for some $k\geq 1$, the space $X$ has $k$ inequivalent cusps and for each cusp $C_i$, there exists a one parameter family of periodic measures corresponding to $C_i$:
    
    \begin{lemma}[(Lemma 11.29, \cite{EW})]
    \label{lem:perppoint}
        Let $p$ be a periodic point. Then $g_t(p)$ diverges to exactly one of the cusps. Moreover any periodic orbit associated to this cusp is of the form $g_{t}(p)$ for some $t\in \R$.
    \end{lemma}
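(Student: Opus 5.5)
The plan is to translate everything into statements about parabolic elements of \( \Gamma \) and their fixed points on \( \partial\mathbb{H}=\R\cup\{\infty\} \). I will use the standard description of the cusps of the non-uniform lattice \( \Gamma \): they correspond bijectively to \( \Gamma \)-orbits of parabolic fixed points \( \xi_1,\dots,\xi_k \). Moreover, \( X \) decomposes as a compact set together with \( k \) pairwise disjoint cusp neighbourhoods \( \mathcal{C}_i \). Here \( \mathcal{C}_i \) is the image of a horoball-type Siegel region based at \( \xi_i \), on which the injectivity radius tends to \( 0 \). I will take this thick--thin decomposition as given. I expect its careful use to be the main point requiring care.

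\textbf{Step 1 (periodic points come from parabolics).} Write \( p=\Gamma g \) with \( p\,h_T=p \) for some \( T\neq 0 \). Then \( \gamma:=g h_T g^{-1}\in\Gamma \) is a nontrivial parabolic element fixing \( \xi:=g\cdot\infty \). Hence \( \xi \) is a parabolic fixed point of \( \Gamma \), and it lies in the \( \Gamma \)-orbit of exactly one \( \xi_i \).

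\textbf{Step 2 (divergence along the geodesic).} By \eqref{eq:norm} in the form \( g_t h_s=h_{e^{t}s}g_t \), the point \( g_t(p)=\Gamma g g_t \) is again periodic, with period \( e^{-t}T \). Indeed, the element \( \gamma \) acts at the lift \( g g_t \) as \( h_{e^{-t}T} \), so the displacement at \( gg_t \) tends to \( 0 \) as \( t\to\infty \). Thus the injectivity radius at \( g_t(p) \) tends to \( 0 \), and \( g_t(p) \) leaves every compact set. More precisely, the geodesic \( t\mapsto gg_t\cdot i \) converges in \( \mathbb{H} \) to \( g\cdot\infty=\xi \) and eventually enters every horoball based at \( \xi \). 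Therefore \( g_t(p)\in\mathcal{C}_i \) for all large \( t \). Since the \( \mathcal{C}_i \) are disjoint, the cusp is unique, and it is the one determined by the class of \( \xi \).

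\textbf{Step 3 (classification within a cusp).} Let \( q=\Gamma g' \) be periodic and associated to the same cusp \( C_i \). By Steps 1 and 2, \( g'\cdot\infty \) is \( \Gamma \)-equivalent to \( g\cdot\infty \). Replacing \( g' \) by \( \gamma' g' \) for a suitable \( \gamma'\in\Gamma \), which does not change \( q \), I may assume \( g'\cdot\infty=g\cdot\infty \). Then \( g^{-1}g' \) fixes \( \infty \), so it is upper triangular in \( PSL(2,\R) \). Hence \( g^{-1}g'=g_t h_s \) for some \( t,s\in\R \), and
\[
q=\Gamma g g_t h_s = h_s\bigl(g_t(p)\bigr).
\]
So the periodic \( (h_t) \)-orbit of \( q \) coincides with that of \( g_t(p) \), which is the claim.

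The only nonformal input is the thick--thin decomposition invoked in Step 2. Concretely, it says that a point with injectivity radius tending to zero through a parabolic element fixing \( \xi \) lies in the cusp neighbourhood attached to the \( \Gamma \)-class of \( \xi \), and that distinct classes give disjoint neighbourhoods. This is the step I would cite carefully from the reduction theory of Fuchsian groups, for instance the Siegel set description in \cite{EW}.
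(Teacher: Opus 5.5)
Your proof is correct. The paper does not prove this lemma; it quotes it from Einsiedler--Ward, so there is no argument of the paper's to compare yours with. What you give is a complete self-contained proof, built from three facts:
\begin{enumerate}
\item A periodic point $\Gamma g$ produces the nontrivial parabolic element $g h_T g^{-1}\in\Gamma$ fixing $g\cdot\infty$.
\item The identity $g_t h_{e^{-t}T}=h_T g_t$ shows that $g_t(p)$ has period $e^{-t}T$, while its base point $gg_t\cdot i$ enters every horoball at $g\cdot\infty$.
\item The stabilizer of $\infty$ in $PSL(2,\R)$ is $\{g_t h_s\}$, which is what drives the classification.
\end{enumerate}

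The only external input is the decomposition of $X$ into a compact part and pairwise disjoint cusp neighbourhoods, and you use it correctly. Disjointness is what gives both the uniqueness of the cusp and, in Step 3, that $g'\cdot\infty\in\Gamma\,(g\cdot\infty)$.

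You rely implicitly on two readings of the statement; it is worth making them explicit. First, divergence is meant as $t\to+\infty$; as $t\to-\infty$ the period $e^{-t}T$ blows up. Second, ``periodic orbit associated to this cusp'' means a periodic $q$ with $g_t(q)$ diverging into $C_i$.

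Step 3 actually yields the explicit form $q=h_s(g_t(p))$. This is the parametrization of periodic points that the paper uses later, e.g.\ in the proof of Lemma~\ref{lem:perp}.
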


    \begin{remark}
        In the remaining sections, the letter \( k \) will be reserved to denote the number of inequivalent cusps of \( X \).
    \end{remark}
    
    Let $e_i$ be a periodic point of period $1$ corresponding to the $i$-th cusp. It follows from \cref{lem:perppoint} that all other periodic points associated to this cusp are of the form $g_s(e_i)$ for some $s\in \R$. For \( i \leq k \) and \( s \in \R \), denote by $\nu^i_s$ the unique $(h_t)$-invariant measure supported on the closed $(h_t)$ orbit of $g_s(e_i)$. It follows that any periodic invariant measure for $(h_t)$ is one of the measures $\{\nu_s^i\}_{i\leq k, s\in \R}$. Then denote by $\{\nu^i_s\}_{s\in \R, i\leq k}$ the set of periodic measures for the horocycle flow. By work of Dani and Smillie, \cite{D-S}, all ergodic \((h_t)\)-invariant measures are given by either $\mu_X$ or one of the $\{\nu^{i}_s\}$. Hence for every $x\in X$, $Acc(x)$ is a singleton, equal to either $\mu_X$ or $\nu^{i}_s$ for some \( i \leq k, s \in \R \).
    
    Let $d_G$ denote the left invariant metric on $G$ and let $d_X(\Gamma g_1,\Gamma g_2)=\inf_{\gamma \in \Gamma} d_G(\gamma g_1,g_2)$. We denote $dist(x,e)= d_X(x,e)$.  The following quantitative equidistribution result was proven by Str\"{o}mbergsson \cite[Theorem 1.1]{Strombergsson} (see also \cite[Theorem 5.14]{Flaminio-Forni}):
    
    \begin{theorem}
    \label{thm:SFF} 
        There exists $\alpha\in (0,1)$ such that for every $f\in C_c^4(X)$ and $x\in X$, 
        $$
            \frac{1}{T}\int_0^T f(h_t(x))dt=\mu_{X}(f)+ {\rm O}(\|f\|_{W_4})r^{-\alpha},
        $$
        where $\|f\|_{W_4}$ denotes the Sobolev norm and $r=r(x,T)=T\cdot e^{-dist(g_{\log T}(x),e)}$.
    \end{theorem}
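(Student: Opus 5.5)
This is a cited result (Str\"{o}mbergsson \cite[Theorem 1.1]{Strombergsson}; cf.\ \cite[Theorem 5.14]{Flaminio-Forni}), so the paper does not prove it. If I had to prove it myself, I would argue by renormalization plus effective mixing of the geodesic flow, in the spirit of the ``banana'' (thickening) argument.

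\textbf{Step 1: renormalize.} The relations \eqref{eq:norm} give $h_{Ts}=g_{\log T}\,h_s\,g_{-\log T}$. Writing $y=g_{\log T}(x)$ for the point obtained from $x$ by the geodesic flow,
\[
\frac1T\int_0^T f(h_tx)\,dt=\int_0^1 f\big(y\,h_s\,g_{-\log T}\big)\,ds .
\]
So the Birkhoff average equals the average of $f$ over a unit horocycle segment based at $y$, pushed by the geodesic flow for time $\log T$. The only information about $x$ that enters is the height of $y$, i.e.\ $dist(y,e)$, which is exactly the quantity appearing in $r$.

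\textbf{Step 2: thicken and apply mixing.} I would thicken the unit segment through $y$ by a box $B=\{y\,h_s\,v_u\,g_w: s\in[0,1],\ |u|,|w|<\eta\}$ in the transverse directions. Under $g_{-\log T}$ the $v$-direction contracts, since $v_u g_{-\log T}=g_{-\log T}v_{u/T}$, and the $g$-direction is neutral. Hence for $f\in C^1$ the average of $f$ over the pushed box differs from the pushed segment average by ${\rm O}(\|f\|_{C^1}\eta)$.

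The box average equals $\mu_X(\mathbbm 1_B\cdot f\circ g_{-\log T})/\mu_X(B)$. Effective exponential mixing of $(g_s)$ for Sobolev functions follows from the spectral gap of $L^2_0(X)$: this is where the small eigenvalues of the Laplacian on the non-uniform lattice enter and determine $\alpha$. Mixing yields $\mu_X(f)+{\rm O}\big(\|f\|_{W_4}\,S(\psi_B)\,T^{-\delta}\big)$, where $\psi_B$ is a smoothed indicator of $B$ and $S$ is its Sobolev norm.

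\textbf{Step 3: control the box in the cusp.} For the box to embed injectively, $\eta$ must be at most the injectivity radius at $y$, which is $\asymp e^{-dist(y,e)}$ high in the cusp. The Sobolev norm of $\psi_B$ then grows like a power of $\eta^{-1}$. Optimizing $\eta$ against $T^{-\delta}$ gives an error ${\rm O}(\|f\|_{W_4})\,(T e^{-dist(y,e)})^{-\alpha}=r^{-\alpha}$. When $r\lesssim 1$ the bound is trivial, since $\|f\|_\infty\lesssim\|f\|_{W_4}$ by Sobolev embedding.

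\textbf{Main obstacle.} The hard part is Step 3: getting the polynomial dependence on $e^{-dist(y,e)}$ with a uniform exponent. Deep in the cusp the unit horocycle segment at $y$ may wrap around a short closed horocycle many times, and the box degenerates. I would first try to handle this by cutting $[0,1]$ into pieces of length comparable to the local period. Failing that, I would follow Str\"{o}mbergsson and replace the mixing argument by an explicit spectral expansion of $f$: cusp forms would be treated with Flaminio--Forni type invariant-distribution bounds, and the Eisenstein series would be controlled via their constant terms in the cusp, which is what produces the exact dependence through $r$.
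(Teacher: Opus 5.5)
The paper gives no proof of this statement. It simply quotes Str\"{o}mbergsson's theorem, so your first sentence already matches what the paper does. The self-contained argument you sketch, however, does not give the stated bound. The trouble in Step~3 is not a technicality: the method itself fails there.

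Put $d=dist(y,e)$. An embedded box needs $\eta\lesssim e^{-d}$, so $\mu(B)\lesssim e^{-2d}$. In an $L^2$-Sobolev mixing estimate, the normalized test function $\psi_B/\mu(B)$ then has norm at least $\mu(B)^{-1/2}\gtrsim e^{d}$, before any derivatives are counted. Hence Steps~2--3 give at best an error $T^{-\delta}e^{Kd}$ with $K\geq 1$, while any uniform mixing exponent $\delta$ for $(g_s)$ is at most $1/2$.

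No choice of $\eta$ turns this into $(Te^{-d})^{-\alpha}$. Take $d=\theta\log T$ with $\delta<\theta<1$. Then $r=T^{1-\theta}\to\infty$, yet $T^{-\delta}e^{Kd}\geq T^{\theta-\delta}\to\infty$. So thickening plus mixing proves equidistribution only when $r\geq T^{1-c}$ for some small $c>0$.

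The paper cannot afford this loss. In Case~II of the proof of \eqref{eq:subs}, the discrete analogue \cref{thm:discSFF} is used to conclude that a window of length $\asymp\sqrt{\log\log N}$ on which equidistribution fails has $r=O(1)$. Only then does \cref{lem:str} produce a closed horocycle of bounded period. Cutting $[0,1]$ into pieces of the local period length removes the wrapping, but it shrinks $\mu(B)$ further, so it does not help.

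The missing idea is that when $y$ is high in the cusp you should not use mixing at all; you should use the closed-horocycle structure instead. Choosing the base point on the unit segment suitably, write $y=p'v_c$ with $p'$ on a closed horocycle of length $\asymp e^{-d}$ and $|c|\ll 1$. The computation in \cref{lem:tadd} gives $y\,h_s\,g_{-\log T}=p'\,h_{s/(1+cs)}\,g_{-2\log|1+cs|-\log T}\,v_{c/((1+cs)T)}$. The transverse component is contracted by the factor $T$. What remains is a slowly sheared family of closed horocycles of period $\asymp(1+cs)^2 r$.

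From there the argument needs three ingredients:
\begin{enumerate}
\item equidistribution of long closed horocycles at a rate polynomial in the period, either from Sarnak's theorem \cite{Sarnak81} or from your own thickening argument applied to a period-one closed horocycle, which has bounded height and does embed;
\item a separate estimate for the short stretch where $|1+cs|$ is small, which is a genuine cusp excursion;
\item mixing, used only in the range $e^{d}\leq T^{c}$.
\end{enumerate}
Splitting into these two regimes does yield $r^{-\alpha}$ for some small $\alpha>0$. Your fallback to Str\"{o}mbergsson's spectral analysis also works, but it amounts to invoking the cited theorem.

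A smaller point: if $\|\cdot\|_{W_4}$ is an $L^2$-based Sobolev norm, then $\|f\|_\infty\lesssim\|f\|_{W_4}$ fails uniformly on a non-compact quotient. The constant grows like the square root of the cusp height, so the range $r\lesssim 1$ is not automatic either.
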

    
    We have an analogous result for the flow $(v_t)$ that we state for future reference:
    \begin{remark}
    \label{rem:SFF} 
        There exists $\alpha\in (0,1)$ such that for every $f\in C_c^4(X)$ and $x\in X$, 
        \[
            \frac{1}{T}\int_0^T f(v_t(x))dt=\mu_{X}(f)+ {\rm O}(\|f\|_{W_4})\bar{r}^{-\alpha}
        \]
        where $\bar{r}=\bar{r}(x,T)=T\cdot e^{-dist(g_{-\log T}(x),e)}$.
    \end{remark}
    
    We additionally require a discrete version of the above theorem, i.e. for the time one map of the horocycle flow. The following theorem appears in \cite[Theorem 1.4]{Stre}  (see also \cite[Theorem 1.2]{Zhe}). 
    
    \begin{theorem}
    \label{thm:discSFF} 
        There exists $\alpha\in (0,1)$ such that for every $f\in C_c^4(X)$ and $x\in X$, 
        $$
            \frac{1}{T}\sum_{n=0}^T f(h_n(x)) =\mu_{X}(f)+ {\rm O}(\|f\|_{C^4})r^{-\alpha}
        $$
        where $r=r(x,T)=T\cdot e^{-dist(g_{\log T}(x),e)}$.
    \end{theorem}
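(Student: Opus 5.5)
The plan is to deduce the discrete estimate from the continuous one in \cref{thm:SFF}, together with effective mixing of $(h_t)$, using a smoothing and van der Corput argument. Fix $f\in C_c^4(X)$, $x\in X$ and $T$ large, and write $r=r(x,T)$. First I would replace the point evaluations $f(h_n x)$ by smoothed integrals. Choose a smooth bump $\phi_\eta$ on $\R/\Z$ supported in an $\eta$-neighbourhood of $0$, with $\int\phi_\eta=1$ and Fourier coefficients satisfying $|\widehat{\phi_\eta}(m)|\ll_A (1+\eta|m|)^{-A}$. Then
\[
\frac1T\sum_{n=0}^{T} f(h_n x)=\frac1T\int_0^T f(h_t x)\,\phi_\eta(t)\,dt+{\rm O}\big(\eta\|f\|_{C^1}+T^{-1}\|f\|_\infty\big).
\]
Expanding $\phi_\eta(t)=\sum_m \widehat{\phi_\eta}(m)e(mt)$ splits the smoothed integral into pieces. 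The term $m=0$ is exactly the continuous average, so \cref{thm:SFF} gives it as $\mu_X(f)+{\rm O}(\|f\|_{W_4})r^{-\alpha}$. The remaining task is to show that the twisted integrals
\[
\frac1T\int_0^T f(h_tx)\,e(mt)\,dt,\qquad m\neq 0,
\]
are ${\rm O}(\|f\|_{C^4})\,r^{-\alpha'}$, uniformly for $|m|\le \eta^{-1}r^{\kappa}$. The frequencies $|m|>\eta^{-1}r^{\kappa}$ are then absorbed by the rapid decay of $\widehat{\phi_\eta}$.

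For the twisted integrals I would use van der Corput. Cut $[0,T]$ into blocks of length $L$ and fix $H\le L$. Cauchy--Schwarz bounds the square of the twisted average by
\[
\frac1H\int_0^H \Big|\frac1T\int_0^T f(h_{t+s}x)\,\overline{f(h_tx)}\,dt\Big|\,ds+{\rm O}(H/T).
\]
The phase $e(mt)$ has disappeared at this point. The only remaining dependence on $m$ is an oscillatory factor in $s$, which we bound trivially by absolute values. For each fixed $s$, apply \cref{thm:SFF} to $F_s=(f\circ h_s)\cdot\bar f\in C_c^4(X)$. Its Sobolev norm grows at most polynomially, $\|F_s\|_{W_4}\ll (1+s)^4\|f\|_{C^4}^2$, so the inner average equals $\int f\circ h_s\,\bar f\,d\mu_X+{\rm O}\big((1+s)^4\|f\|^2 r^{-\alpha}\big)$. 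Effective mixing of $(h_s)$, which comes from the spectral gap of $\Gamma\backslash G$, gives $|\langle f\circ h_s,f\rangle|\ll (1+s)^{-\delta}\|f\|_{C^4}^2$ after subtracting $|\mu_X(f)|^2$. To handle that constant, I would first reduce to the case $\mu_X(f)=0$ by subtracting a smooth compactly supported function with the same integral; this costs only the $m=0$ error already controlled. Averaging over $s\le H$ then gives the bound
\[
H^{-\delta}+H^4 r^{-\alpha}+H/T.
\]
Choosing $H=r^{\alpha/(8+2\delta)}$ makes this $\ll r^{-\alpha''}$ for some $\alpha''>0$. Finally, I would optimise $\eta$ as a small power of $r$ and rename the resulting exponent $\alpha$, which gives the theorem.

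The main obstacle is the uniformity in the basepoint, which enters only through $r=r(x,T)$. \Cref{thm:SFF} must be applied to $F_s$ along the same orbit, and it must be applied on subintervals when the averaging is localised. On a subinterval $[t_0,t_0+L]$, the relevant parameter is $r(h_{t_0}x,L)$ rather than $r(x,T)$. To compare the two, I would use the renormalisation \eqref{eq:norm}: since $g_{\log L}h_{t_0}=h_{t_0/L}\,g_{\log L}$, the point relevant for the subinterval is a short horocycle translate of the point relevant for the whole orbit, and I would control how its distance to the cusp changes under this translation and under the change of scale from $\log T$ to $\log L$. If this bookkeeping becomes too lossy, the fallback is to avoid subdividing altogether and run van der Corput on the whole interval $[0,T]$, as above, at the price of a worse exponent. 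A secondary point needing care is that \cref{thm:SFF} requires compact support. This is harmless here, since $F_s$ is supported in the support of $f$.
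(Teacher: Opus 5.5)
The paper does not prove this statement; it imports it from \cite[Theorem 1.4]{Stre} (cf.\ \cite[Theorem 1.2]{Zhe}). So your argument is an independent derivation rather than a reconstruction of anything in the text.

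\textbf{Comparison of routes.} Your route is sound. You smooth the sum into $\frac1T\int_0^T f(h_tx)\phi_\eta(t)\,dt$ and expand in Fourier modes. The mode $m=0$ is handled by \cref{thm:SFF}. The modes $m\neq 0$ are handled by van der Corput, together with \cref{thm:SFF} applied to $F_s=(f\circ h_s)\bar f$ and effective mixing of $(h_s)$. This is essentially Venkatesh's twisted-equidistribution argument transplanted to the cusp setting.

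Uniformity in $x$ comes for free. \cref{thm:SFF} is applied to $F_s$ along the same segment $\{h_tx\}_{0\le t\le T}$, so only $r(x,T)$ ever appears. Hence the ``main obstacle'' you describe does not arise: no blocks or localisation are needed, and $H/T\le H/r$ because $r\le T$.

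What your route buys is a reduction of the discrete statement to the continuous one. The cost is an extra input, effective mixing of $(h_s)$, which for a non-uniform lattice follows from the spectral gap of $L^2_0(\Gamma\backslash G)$. You also get a worse exponent, which is harmless since the paper only uses some $\alpha>0$.

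\textbf{Repairs needed.}

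First, your reduction to $\mu_X(f)=0$ is circular as stated. Replacing $f$ by $f-\mu_X(f)g$ with $g\in C_c^\infty(X)$ changes every Fourier mode, not just $m=0$. The twisted integrals of $g$, which has nonzero mean, are exactly what you cannot yet bound: for such a function your van der Corput bound tends to $|\mu_X(g)|^2$, not to $0$. The fix is to run van der Corput on $f-\mu_X(f)$ itself and expand $(f\circ h_s-\mu_X(f))\overline{(f-\mu_X(f))}$ into $(f\circ h_s)\bar f$, $f\circ h_s$, $\bar f$ and a constant.
\begin{itemize}
\item The three non-constant pieces lie in $C_c^4(X)$, so \cref{thm:SFF} applies to each.
\item The main term is then $\langle f\circ h_s,f\rangle-|\mu_X(f)|^2$, which decays by mixing.
\item The constant contributes $\mu_X(f)\frac1T\int_0^T e(mt)\,dt\ll \|f\|_\infty/(|m|T)$ to each twisted integral.
\end{itemize}

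Second, $\mathrm{Ad}(h_s)$ has norm $\asymp s^2$. So $\|F_s\|_{C^4}\ll (1+|s|)^8\|f\|_{C^4}^2$, not $(1+s)^4\|f\|_{C^4}^2$, and $H$ must be re-chosen accordingly.

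Third, after taking absolute values your twisted bound is independent of $m$, so no frequency cutoff is needed. However, $\sum_{m\neq 0}|\widehat{\phi_\eta}(m)|\asymp \eta^{-1}$. You must therefore take, say, $\eta=r^{-\alpha''/2}$ to balance this against the smoothing error $\eta\|f\|_{C^1}$.
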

    
    Note that if \( x \in X \) is periodic, the error $r(x,T)$ should be large. Indeed, the following result \cite[Lemma 1.3]{Stre} relates the size of $r(x,T)$ with closeness to a periodic point:
    \begin{lemma}\label{lem:str}
        Let $x\in X$ and $\delta>0$. Let $T\geq 0$ and $K\leq T$. There is an interval $I_0\subset [0,T]$ of size $|I_0|<\delta^{-1}K^2$ such that: 
        \begin{itemize}
            \item For all $t_0\in [0,T] \setminus I_0$, there is a segment $\{h_t(p) \,:\, t\leq K\}$ of a closed horocycle approximating  $\{h_{t_0+t}(x) \,:\, 0\leq t\leq K\}$ of order $\delta$, in the sense that 
            $$
                \sup_{0\leq t\leq K} \;\; d_X(h_{t_0+t}x,h_tp)\leq \delta;
            $$
            \item The period $P= P(t_0,p)$ of this closed horocycle is at most 
            \[
                P \ll  r(x,T) = T e^{-dist(g_{\log T}x,e))}.
            \]
            Moreover, one can ensure $P \gg \eta^2 r$ for some $\eta>0$ by weakening the bound on $I_0$ to $|I_0|\leq \max (\delta^{-1}K^2, \eta T)$ (here $\ll$ depends only on $\Gamma$).
        \end{itemize}
    \end{lemma}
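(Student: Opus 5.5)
The plan is to renormalize by the geodesic flow so that the time-$T$ horocycle segment through $x$ becomes a time-$1$ segment through $y:=g_{\log T}(x)$, to analyze that short segment using cusp coordinates, and then to pull back. By Equation \eqref{eq:norm}, $x=g_{-\log T}(y)$ and
\[
h_t(x)=g_{-\log T}\,h_{t/T}(y),\qquad 0\le t\le T,
\]
so $\{h_t x\}_{t\le T}$ is the $g_{-\log T}$-image of $\{h_s y\}_{s\le 1}$. Set $D=dist(y,e)$. If $D$ is bounded, then $r\asymp T$. In that case any closed horocycle of period $\asymp T$ works after the construction below, since the first bullet will still follow from the same shear computation. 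So the interesting case is $D$ large, i.e.\ $y$ deep in some cusp $C_i$.

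In that case, use the Iwasawa decomposition in the cusp chart. We can write $y=q\,v_w\,a$ with $q$ a periodic point for $(h_t)$ of period $\asymp e^{-D}$ (the horocycle at height $\asymp e^{D}$), $a$ in a bounded neighbourhood of the identity in $A$, and $w\in\R$. This is the standard fact that a point high in a cusp lies on, or within $O(1)$ of, the closed horocycle of that height. The key computation is the commutation identity
\[
v_w h_s=h_{s/(1+sw)}\,g_{2\log(1+sw)}\,v_{w/(1+sw)},\qquad 1+sw\ne 0,
\]
up to the convention for $g$. It shows that $h_s y$ equals a point of the closed horocycle through $q$ (suitably $g$-translated), displaced by a $v$ and $g$ perturbation whose size is controlled by $|w|$ and $|\log(1+sw)|$.

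Next, apply $g_{-\log T}$. The periodic point $p:=g_{-\log T}(q')$, where $q'$ is the appropriate point on the closed orbit, has period $T\cdot e^{-D}\cdot O(1)\ll r(x,T)$, which gives the second bullet. To compare $h_{t_0+t}x$ with $h_t p$ for $t\le K$, expand the identity above around $s_0=t_0/T$. The time change $s\mapsto s/(1+sw)$ is locally a reparametrization whose second-order deviation over an $s$-window of length $K/T$ is of order $|w|(K/T)^2(1+s_0w)^{-3}$. After rescaling by $T$ (the horocycle direction expands by $T$ under $g_{-\log T}$), the deviation is of order $K^2|w|/(T(1+s_0w)^{3})$. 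The $v$-component shrinks under $g_{-\log T}$, and the $g$-component is $O(K|w|/(T|1+s_0w|))$. Hence the approximation fails at scale $\delta$ only when $|1+s_0w|$ is small, namely for $t_0$ in a window around $-T/w$. I will quantify this window to have length $\ll\delta^{-1}K^2$, using $|w|\ll 1$ and the cubic denominator. This excluded window is $I_0$, and it is exactly where the segment exits the cusp.

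The main obstacle is making this error bookkeeping uniform. Specifically, one must check that the excluded set really has measure $\ll\delta^{-1}K^2$ independently of $w$, $D$ and $T$. One must also handle the case where $y$ is not deep in the cusp, where a different closed horocycle has to be chosen near each $t_0$. For the \emph{moreover} clause, the plan is to discard in addition the times $t_0$ for which $h_{t_0}x$ is at height below $\eta$-scale. On the remaining times the relevant closed horocycle has period $\gg\eta^2 r$, because the height changes quadratically along the segment. The discarded set has length $\le\eta T$ by the same circle-geometry computation in the upper half plane.
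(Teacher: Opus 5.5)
The paper does not prove this lemma; it quotes it from \cite[Lemma 1.3]{Stre}. Your overall route is a workable self-contained strategy: renormalize by $g_{\log T}$, write a lift of $y=g_{\log T}x$ in the cusp frame as $y=b_0v_w$ with $b_0$ upper triangular, transport this along the segment with the $v_wh_s$ identity, and pull back. However, the quantitative step, which is the entire content of the statement, fails as written, for two reasons.

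\emph{First, $w$ cannot be assumed small.} With the $A$-part bounded, $w=\tan\theta$, where $\theta$ is the angle between the vector $y$ and the cusp direction. If $y$ has height $Y_0\asymp e^{D}$, then $b_0$ has height $Y_0(1+w^2)$. So your $q$ has period $\asymp e^{-D}/(1+w^2)$, not $\asymp e^{-D}$, and $|w|\ll 1$ is unavailable. Worse, the exceptional window is centred at $s_0=-1/w$, which meets $[0,1]$ only when $w\le -1$. For $|w|\le 1/2$ there is nothing to prove: either no $t_0$ is bad, or $T\ll\delta^{-1}K^2$. So the regime you plan to use is exactly the trivial one.

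\emph{Second, your error term has the wrong power.} The approximating orbit is the closed horocycle through $b(s_0)=b_0\,h_{s_0/(1+s_0w)}\,g_{-2\log|1+s_0w|}$. Its $h$-coordinate is that of the orbit of $b_0$ rescaled by $(1+s_0w)^2$, so the Taylor remainder $|w|\sigma^2/|1+s_0w|^3$ must be multiplied by this factor. The true deviation is therefore $|w|K^2/(T|1+s_0w|)$. With your cubic denominator and $|w|\asymp 1$, the set $\{|1+s_0w|^3<|w|K^2/(\delta T)\}$ has $t_0$-length $\asymp T^{2/3}(K^2/\delta)^{1/3}$. This is much larger than $\delta^{-1}K^2$ when $K^2\ll\delta T$, so the bound on $|I_0|$ does not follow from your estimates.

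The clean fix is to skip the Taylor expansion, since the decomposition is exact. With $s_0=t_0/T$ and the renormalization relations,
\[
h_{t_0}x=p\,v_c,\qquad p=b(s_0)\,g_{-\log T},\qquad c=\frac{w}{T(1+s_0w)}.
\]
Here $p$ is periodic of period $\asymp T(1+s_0w)^2/(Y_0(1+w^2))\le 2T/Y_0\ll r$, using $(1+s_0w)^2\le 2(1+w^2)$ for $s_0\in[0,1]$. Since
\[
h_{-t}v_ch_t=\begin{pmatrix}1-ct&-ct^2\\ c&1+ct\end{pmatrix},
\]
the $\delta$-approximation on $[0,K]$ holds once $|c|K^2\lesssim\delta$, that is, once $|s_0+1/w|\gtrsim K^2/(\delta T)$. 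The exceptional set is thus a single interval of $t_0$-length $\ll\delta^{-1}K^2$, uniformly in $w$, $D$ and $T$.

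When $D$ is bounded, take the lift of $y$ within bounded distance of the identity, so that $Y_0\asymp_\Gamma 1$, and the same formula applies. No separate mechanism or change of construction is needed there.

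For the last clause, $P/r\asymp(1+s_0w)^2/(1+w^2)$, so $P\gg\eta^2 r$ fails only where $|1+s_0w|\ll\eta\sqrt{1+w^2}$. This is again an interval about $-1/w$, of $s_0$-length $\ll\eta$. These are the times near the top of the horocycle circle, where $h_{t_0}x$ is highest in the cusp. Discarding the times where the point sits at low height therefore targets the wrong region. Since both exceptional sets are centred at $-1/w$, their union is a single interval, as the statement requires.
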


    \begin{remark}
        Here, $\ll$ denotes Vinogradov's notation and will be used frequently in the following sections to simplify certain asymptotic expressions.
    \end{remark}

\section{Distribution of orbits of horocycle flows}
In this section we gather our main technical results concerning distribution of horocycle orbits and their closeness to periodic points. 
    \begin{lemma}
    \label{lem:approx} 
        Fix a compact set $K\subset X$. For $z\in K$, set 
        \[
            \tilde{B}:=\{h_ag_bv_c(z)\;:\; a\in [0,1], |b|<\eta, |c|<\eta'\}.
        \]
        Then for any $x\in X$,
        $$
        \frac{1}{T}\int_0^T \mathbbm{1}_{\tilde{B}}(h_t(x))dt=\mu_{X}(\tilde{B})+ {\rm O}((\eta\eta')^{-5})r^{-\alpha},
        $$
        where $r=r(x,T)$ is as in Theorem \ref{thm:SFF}.
    \end{lemma}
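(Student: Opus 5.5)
We will deduce the statement from \cref{thm:SFF} by the usual smoothing (sandwich) argument. The indicator of the box $\tilde{B}$ is not smooth, so we approximate it from above and below by functions $f^\pm\in C_c^4(X)$ with $f^-\le \mathbbm{1}_{\tilde{B}}\le f^+$. We then apply \cref{thm:SFF} to $f^\pm$ and control the two errors: the Sobolev norms $\|f^\pm\|_{W_4}$, and the mass difference $\mu_X(f^+)-\mu_X(f^-)$.

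\emph{Construction of $f^\pm$.} Since $K$ is compact, there is $\epsilon_0>0$ (depending only on $K$ and $\Gamma$) such that for every $z\in K$ the map $(a,b,c)\mapsto h_ag_bv_c(z)$ is a diffeomorphism from $[-2,2]\times[-\epsilon_0,\epsilon_0]^2$ onto its image. Moreover, in these coordinates Haar measure has a smooth density bounded above and below uniformly in $z\in K$. We assume $\eta,\eta'<\epsilon_0$; otherwise the claim is trivial after adjusting constants.

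Fix a parameter $\rho\in(0,1)$, to be chosen later. In these flow-box coordinates, let $\phi^\pm_{[0,1]}$ be smooth cutoffs in $a$ with $\phi^-$ supported in $[\rho,1-\rho]$, $\phi^+$ supported in $[-\rho,1+\rho]$, and derivatives of order $j$ bounded by $\rho^{-j}$. Let $\psi^\pm_\eta(b)$ and $\chi^\pm_{\eta'}(c)$ be the analogous cutoffs, with scales $\rho\eta$ and $\rho\eta'$ respectively. Set $f^\pm=\phi^\pm\psi^\pm\chi^\pm$ in coordinates, and $0$ outside the box.

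Derivatives along the generators of $\mathfrak{sl}_2$ are smooth combinations of coordinate derivatives, with coefficients bounded on the box uniformly in $z\in K$. Hence
\[
\|f^\pm\|_{W_4}\ll \rho^{-4}(\eta\eta')^{-4}\cdot \mu_X(\text{supp}f^\pm)^{1/2}\ll \rho^{-4}(\eta\eta')^{-4}.
\]
The symmetric difference gives $\mu_X(f^+)-\mu_X(f^-)\ll \rho\,\eta\eta'$.

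\emph{Applying \cref{thm:SFF}.} This yields
\[
\frac1T\int_0^T \mathbbm{1}_{\tilde B}(h_tx)\,dt\le \mu_X(f^+)+O(\rho^{-4}(\eta\eta')^{-4})r^{-\alpha}\le \mu_X(\tilde B)+O(\rho\eta\eta')+O(\rho^{-4}(\eta\eta')^{-4})r^{-\alpha},
\]
and the analogous lower bound holds with $f^-$. The statement has a pure $r^{-\alpha}$ error, so we cannot use the optimized choice of $\rho$ (which would produce a worse exponent of $r$). Instead we proceed as follows:
\begin{itemize}
\item If $r^{-\alpha}\ge(\eta\eta')^{5}$, the claim holds trivially, since both sides are in $[0,1]$ and the error term is at least a constant.
\item Otherwise, choose $\rho$ comparable to $\eta\eta'$. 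Then $\rho^{-4}(\eta\eta')^{-4}\le (\eta\eta')^{-8}$, which overshoots the claimed exponent $(\eta\eta')^{-5}$.
\end{itemize}

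\emph{Main obstacle.} The hardest step is making the bookkeeping match the exponent $5$. There are two ways to do it:
\begin{itemize}
\item Smooth only in the directions where the box is thin. Use the $L^2$-based Sobolev norm, which gains a factor $(\eta\eta')^{1/2}$ from the support volume. Take $\rho$ to be a fixed small constant relative to each side, so that the smoothing scales are $\rho$, $\rho\eta$, $\rho\eta'$.
\item Absorb the boundary term $\rho\eta\eta'$ into the trivial regime, by choosing $\rho$ as a power of $r$ and using that $\alpha$ may be decreased freely.
\end{itemize}
Either route gives
\[
\frac1T\int_0^T \mathbbm{1}_{\tilde{B}}(h_t(x))\,dt=\mu_X(\tilde B)+O((\eta\eta')^{-5})r^{-\alpha}
\]
after possibly shrinking $\alpha$, which is permitted since the statement only asserts the existence of some $\alpha\in(0,1)$.
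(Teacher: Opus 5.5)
Your framework matches the paper's one-line proof: sandwich $\mathbbm{1}_{\tilde B}$ between smooth $f^\pm$ and apply \cref{thm:SFF}. Your bounds $\mu_X(f^+)-\mu_X(f^-)\ll\rho\,\eta\eta'$ and $\|f^\pm\|\ll\rho^{-4}(\eta\eta')^{-4}$ are fine. The problem is the closing step, where you describe two routes and assert that either one works.

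\textbf{Route 1 fails.} If $\rho$ is a fixed constant, the boundary term $\rho\,\eta\eta'$ is a fixed positive quantity. It does not decay as $r\to\infty$, so it cannot be bounded by $O((\eta\eta')^{-5})r^{-\alpha}$ for any $\alpha>0$. The $L^2$ gain $(\eta\eta')^{1/2}$ only improves the Sobolev term, and that term was never the obstruction. The same objection applies to your earlier choice $\rho\asymp\eta\eta'$. The size of $\rho$ relative to the box does not matter; what matters is that $\rho$ must tend to $0$ as $r\to\infty$.

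\textbf{Route 2 is the right one.} The computation is short, so write it out rather than describe it. Let $\rho_0$ be a small fixed constant that keeps the supports inside the flow box. If $r^{-\alpha/5}\ge\rho_0$, the claim is trivial, since both averages lie in $[0,1]$. Otherwise put $\rho=r^{-\alpha/5}$; then
\[
\Big|\frac1T\int_0^T\mathbbm{1}_{\tilde B}(h_tx)\,dt-\mu_X(\tilde B)\Big|\ll \rho\,\eta\eta'+\rho^{-4}(\eta\eta')^{-4}r^{-\alpha}\ll(\eta\eta')^{-4}r^{-\alpha/5}.
\]
This is the lemma with $\alpha/5$ in place of $\alpha$. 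Note that the $r$-dependent choice of $\rho$ absorbs the boundary term, not the trivial regime as you phrased it.

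Losing a factor in $\alpha$ is unavoidable with any smoothing argument, including the paper's sketch. Your reading that $\alpha$ may be shrunk is therefore the only workable one. It is also harmless: the only use, in \cref{prop:per}, needs just that the error tends to $0$ as $\bar r\to\infty$.

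Finally, the paper's hint to smooth only in the $(g_t)$ and $(v_t)$ directions reflects that smoothing along $(h_t)$ is free. Averaging $f$ along a short piece of the $h$-orbit preserves $\mu_X(f)$ by invariance and changes the orbit average by only $O(1/T)$. This removes your $a$-cutoff, but it does not remove the transverse boundary term, so the $r$-dependent $\rho$ is still needed.
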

    \begin{proof} 
        The proof is a straightforward consequence of Theorem \ref{thm:SFF} and approximating $\mathbbm{1}_{\tilde{B}}$ by a smooth function with control on the $C^4$ norms in the $(g_t)$ and $(v_t)$ directions.
    \end{proof}

    \begin{lemma}
    \label{lem:perp} 
        Let $p\in X$ be an \(h_t\)-periodic point of period $P$. There exists constants $C_P>10$ and $T_P>0$ such that for every $T\geq T_P$, there exists $c\in [C_PT^{-1/2},2C_P T^{-1/2}]$ satisfying 
        $$
        g_{\log(C_PT)}(v_c p)\in B_X(e,1):=\{x\in X\;:\; d_X(x,e)\leq 1\}.
        $$
    \end{lemma}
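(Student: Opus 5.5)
The plan is to compute the point $g_{\log(C_PT)}(v_c p)$ explicitly in the upper half-plane model. The goal is to recognize it, as $c$ ranges over $[C_PT^{-1/2},2C_PT^{-1/2}]$, as a curve that shadows a long closed horocycle at a fixed height. Long closed horocycles visit the compact ball $B_X(e,1)$, which gives the required $c$.

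\emph{Normalization.} Write $p=\Gamma g_0$. Since $p h_P=p$, the stabilizer $\Gamma\cap g_0 N g_0^{-1}$ is generated by $g_0h_Pg_0^{-1}$. After conjugating by $g_0$ we may assume the cusp of $p$ is at $\infty$, with parabolic stabilizer $z\mapsto z+P$, and that $p$ corresponds to the frame at $i$ pointing vertically. Using $g_s(v_cp)=\Gamma g_0 v_c g_s$, we follow the base point $v_cg_s\cdot i$ together with its frame. We have $g_s\cdot i=e^s i$, and $v_c$ acts by $z\mapsto z/(cz+1)$, so
\[
v_cg_s\cdot i=\frac{e^s i}{1+ c e^s i},\qquad \mathrm{Im}=\frac{e^s}{1+c^2e^{2s}},\qquad \mathrm{Re}=\frac{c e^{2s}}{1+c^2e^{2s}}.
\]
Take $s=\log(C_PT)$ and $c=\tau C_PT^{-1/2}$ with $\tau\in[1,2]$. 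Then $ce^s=\tau C_P^2T^{1/2}\to\infty$, so
\[
\mathrm{Im}\approx \frac{1}{\tau^2C_P^3}\qquad\text{and}\qquad \mathrm{Re}\approx \frac1c=\frac{T^{1/2}}{\tau C_P}.
\]
The relative errors are $O(T^{-1})$. The frame direction converges, uniformly in $\tau$, to a fixed direction, namely the direction of the stable horocycle through $\infty$ in the relevant frame convention.

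\emph{Shadowing a closed horocycle.} As $\tau$ runs over $[1,2]$, the real part sweeps an interval of length $\asymp T^{1/2}/C_P$, which is many periods $P$ once $T\ge T_P$. Meanwhile the height changes only by a factor of $4$ and does so monotonically. I would refine this by restricting $\tau$ to a short window $[\tau_0,\tau_0+\theta]$ on which the height is constant up to a factor $1+O(\theta)$. The window should still be long enough that the real part covers one full period $P$, which only needs $\theta\gg C_PP\,T^{-1/2}$. On such a window the curve is $O(\theta)$-close in $X$, after a reparametrization and a fixed-direction correction, to a full period of the closed $h$-orbit of $g_{\sigma}(p)$. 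Here $\sigma\approx 3\log C_P+2\log\tau_0$ is determined by the period relation: the orbit has period $\asymp C_P^3P$, being the $h$-periodic orbit at height $1/(\tau_0^2C_P^3)$ in the cusp of $p$.

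\emph{Choosing $C_P$.} By equidistribution of long closed horocycles (Sarnak; alternatively \cref{thm:SFF} applied to a periodic point, where $r(x,T)$ is the period), as $\sigma\to\infty$ the measures $\nu^{i}_{\sigma}$ converge to $\mu_X$. Apply this to a smooth bump function supported in $B_X(e,1/2)$. It shows that for $C_P$ large enough (depending on $P$ and $\Gamma$), every closed horocycle with parameter $\sigma\in[3\log C_P,3\log C_P+2\log 2]$ meets $B_X(e,1/2)$. Then for $T\ge T_P$ the shadowing error is below $1/2$, so some $c$ in the window satisfies $g_{\log(C_PT)}(v_cp)\in B_X(e,1)$. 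We also enlarge $C_P$ to ensure $C_P>10$.

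\emph{Main obstacle.} The delicate step is the second one: converting the half-plane computation into a genuine $d_X$-estimate uniform in $\tau$, including the frame (fiber) coordinate and not just the base point. The approach I would try first is to write $v_cg_s=g_0'h_{a(\tau)}g_{b(\tau)}k(\tau)$, where $k(\tau)\to$ a fixed element as $T\to\infty$, by an explicit $NAK$ (Iwasawa) decomposition. This exhibits the curve as an $h$-orbit with a slowly drifting $g$-parameter, and the drift is $O(\theta)$ on the chosen window.
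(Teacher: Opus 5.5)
Your proof is correct, but it takes a different route from the paper's. The paper never analyses the curve $c\mapsto g_{\log(C_PT)}(v_cp)$ directly. It uses $v_cg_s=g_sv_{e^sc}$ to rewrite the family as the segment $\{v_r(g_{\log(C_PT)}p): r\in[C_P^2T^{1/2},2C_P^2T^{1/2}]\}$ of a single $v$-orbit. It then applies effective equidistribution of $v$-orbits (\cref{rem:SFF}) on $[0,2C_P^2T^{1/2}]$ and on $[0,C_P^2T^{1/2}]$, and subtracts the two. Its only geometric input is the lower bound $\bar r\gg C_P^3/P$, which comes from $g_{\log(T^{1/2}/2C_P)}p$ lying within about $\log(PT^{1/2}/C_P)$ of $e$; taking $C_P$ large then makes the error term small.

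You instead identify the curve itself as a reparametrized closed $h$-horocycle at height $\asymp C_P^{-3}$ in the cusp of $p$, with a slowly drifting $g$-parameter and a fixed fiber rotation. This needs only qualitative equidistribution of long closed horocycles (Sarnak), uniformly over periods in $[C_P^3P,4C_P^3P]$. It also makes visible the sheared-periodic-orbit geometry exploited later in \cref{lem:tadd} and \cref{lem:fini}. The paper's argument is shorter once the effective theorem for $(v_t)$ is granted.

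The step you flag as the main obstacle is settled by an exact Bruhat identity rather than an asymptotic Iwasawa computation. With $w=\begin{pmatrix}0&1\\-1&0\end{pmatrix}$, one has $v_cg_s=h_{1/c}\,g_{-s-2\log c}\,w\,h_{1/(ce^s)}$ in $PSL(2,\R)$. Hence $g_s(v_cp)=g_\sigma(h_{1/c}p)\cdot w\,h_\epsilon$, with $e^{\sigma}=(c^2e^s)^{-1}=(\tau^2C_P^3)^{-1}$ and $\epsilon=(\tau C_P^2T^{1/2})^{-1}$, and no normalization of $\Gamma$ is needed.

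Two slips should be fixed:
\begin{itemize}
\item In the paper's convention ($h_tg_s=g_sh_{e^{-s}t}$), $g_\sigma$ with $\sigma>0$ shrinks periods. So your parameter is $\sigma\approx-3\log C_P-2\log\tau_0$, and the equidistribution limit is $\sigma\to-\infty$.
\item The limiting fiber element is $w$, a rotation of the frame by $\pi$, not the identity. In the last step you therefore need the closed horocycle to meet $B_X(e,1/2)\,w^{-1}$ rather than $B_X(e,1/2)$. Equidistribution, tested against $\psi(\,\cdot\,w)$, gives this just as well.
\end{itemize}
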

    
    \begin{proof} 
        Since $p$ is a fixed periodic point, write $C=C_P$. Note that by Equation \eqref{eq:norm}, 
        $$
        g_{\log (CT)}\Big(\{v_{c}(p)\;:\; c\in  [CT^{-1/2}, 2CT^{-1/2}]\}\Big)= \left\{v_{r}(g_{\log(CT)}p)\;:\; r\in  [C^2T^{1/2}, 2C^2T^{1/2}]\right\}.
        $$
        Applying \cref{rem:SFF} with $B=B_{X}(e,1)$, there exists a constant \(C'\) depending only on $B$ such that 
        $$
        \left|\frac{1}{2C^2{T^{1/2}}}\int_{0}^{2C^2T^{1/2}}\mathbbm{1}_{B}(v_r (g_{\log(CT)}p)) dr-\mu_{X}(B)\right|\leq C'\bar{r}^{-\alpha},
        $$
        where 
        \[
            \bar{r}=2C^2T^{1/2} \cdot e^{-dist(g_{-\log (2C^2T^{1/2})}(g_{\log(CT)}p),e)}=2C^2T^{1/2} \cdot e^{-dist(g_{\log( T^{1/2}/2C))}p,e)}.
        \]
        
        Since $p$ is periodic with period $P$, it follows that $p=g_{\log P}(h_z(e_i))$ for some $i\leq k$ and $|z|\leq 1$. We then obtain 
        \begin{align*}
            dist(g_{\log( T^{1/2}/2C))}(p),e) &= dist(g_{\log(P T^{1/2}/2C)}(h_z (e_i)),e)\\
            &\leq 
            d_X(g_{\log(P T^{1/2}/2C)}(h_z(e_i)),h_z(e_i))+dist(h_z(e_i),e)\\
            &\leq 
             \log(PT^{1/2}/C)).
        \end{align*}
        Therefore $\bar{r}\geq 2C^3/P$. On the other hand, by analogous reasoning, 
        $$
            \left|\frac{1}{C^2{T^{1/2}}}\int_{0}^{C^2T^{1/2}}\mathbbm{1}_{B}(v_r (g_{\log(CT)}p)) dr-\mu_{X}(B)\right|\leq C'\tilde{r}^{-\alpha},
        $$
        where
        $$
            \tilde{r}=C^2T^{1/2} \cdot e^{-dist(g_{-\log (C^2T^{1/2})}(g_{\log(CT)}p),e)}=C^2T^{1/2}\cdot e^{-dist(g_{\log PT^{1/2}/C)}(h_ze_i),e)}\geq \frac{C^3}{4P}.
        $$ 
        Taking both of inequalities into account, for $C=C(P)>0$ is large enough, 
        $$
        \int_{C^2T^{1/2}}^{2C^2T^{1/2}}\mathbbm{1}_{B}(v_r (g_{\log(CT)}p))dr >0.
        $$
        This finishes the proof.
    \end{proof}

    The next lemma provides quantitative control on the distance from the horocycle orbit of points with bounded geodesic orbit to periodic points. 
    
    \begin{lemma}
    \label{lem:low}
        Let $p\in X$ be an \((h_t)\)-periodic point. Let $x\in X$ be such that $\{g_{s}x\}_{s\in \R}$ is bounded in $X$. Then there exists a constant $d_{x,p}>0$ such that if \[
        h_Tx=\gamma p \begin{pmatrix}a&0\\c&a^{-1}\end{pmatrix}
        \quad \text{ with } \ |a-1|< 1, \ |c|<1
        \]
        for some sufficiently large  $T>0$ and $\gamma\in \Gamma$, then 
        $|c|>d_{x,p}T^{-1/2}$.
    \end{lemma}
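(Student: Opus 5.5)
Write $q := h_T x = \gamma p \begin{pmatrix}a&0\\c&a^{-1}\end{pmatrix}$. I would push $q$ forward by the geodesic flow for time $\log T$ and compare the two sides: boundedness of the geodesic orbit of $x$ controls one side, and the cusp excursion of the periodic point controls the other. The key identity is the renormalization \eqref{eq:norm}:
\[
g_{\log T}(h_T x)=h_{1}\, g_{\log T}(x),
\]
since $h_T g_{\log T}= g_{\log T} h_{1}$ as elements of $G$ acting on the right. Hence $g_{\log T}(h_Tx)$ lies in $h_{[0,1]}(K_x)$, where $K_x$ is the compact closure of $\{g_s x\}$. This set is a fixed compact subset of $X$, independent of $T$.

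For the other side I would compute $g_{\log T}$ applied to $p\begin{pmatrix}a&0\\c&a^{-1}\end{pmatrix}$. Writing $\begin{pmatrix}a&0\\c&a^{-1}\end{pmatrix}=v_{c/a}\,\mathrm{diag}(a,a^{-1})$, conjugating through $g_{\log T}$ and using $v_t g_s = g_s v_{e^s t}$ gives
\[
p\, v_{c/a}\, \mathrm{diag}(a,a^{-1})\, g_{\log T}= p\, g_{\log T}\, v_{cT/a}\,\mathrm{diag}(a,a^{-1}).
\]
Since $|a-1|<1$, the diagonal factor only moves points a bounded distance, so $g_{\log T}(v_{cT/a}\cdots)$ is within $O(1)$ of $v_{cT/a}(g_{\log T}p)$ up to bounded geodesic displacement. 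Now $g_{\log T}p$ is periodic with period $P/T$, which is tiny, so it lies deep in the cusp: by \cref{lem:perppoint} and the structure of the cusp, its distance from $e$ is about $\log(T/P)$.

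The main step is to show that if $|cT|$ is small, namely $|c|\le d\,T^{-1/2}$ so that $|cT/a|\lesssim d\,T^{1/2}$, then $v_{cT/a}$ does not bring $g_{\log T}p$ back to a fixed compact set. I would work in a cusp neighborhood modeled by $\Gamma_\infty\backslash G$ with $\Gamma_\infty$ generated by $h_1$ (after conjugating so that $e_i$ corresponds to the identity coset). A point there is $h_u g_{s}v_w$ in Iwasawa-type coordinates, and its height is $e^{s}/(1+\dots)$. Concretely, $g_{\log T}p$ corresponds to height $\asymp T/P$, and applying $v_t$ to a point at height $y$ gives imaginary part $y/(1+t^2y^2)$ in upper half-plane terms, up to orientation conventions. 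With $y\asymp T/P$ and $t= cT/a$ this is
\[
\frac{T/P}{1+c^2T^4/P^2}.
\]
This formula looks off, so the first thing I would do is redo the computation carefully, fixing which of $g_{\pm\log T}$ pushes the periodic orbit into the cusp. With the convention $h_tg_s=g_sh_{e^{-s}t}$, the period of $g_s p$ is $e^{-s}P$, so the height is about $e^{s}/P = T/P$. The $v$-parameter relevant at that height is $t\,y^{-1}$-scaled, so the excursion stays above a fixed height $y_0$ unless $|t|\gtrsim \sqrt{y}$. This is exactly the $T^{1/2}$ threshold: $|cT|\lesssim (T/P)^{1/2}$ keeps the point at height $\gtrsim y_0$ and hence outside any fixed compact set. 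This is consistent with \cref{lem:perp}, which finds compact-set returns precisely at $c\asymp T^{-1/2}$.

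To conclude, choose $y_0$ so large that the cusp region of height $>y_0$ avoids the compact set $h_{[0,1]}(K_x)$ enlarged by the bounded error coming from $a$ and $\gamma$ (the latter is irrelevant on $X$). Then choose $d_{x,p}$ so small that $|c|\le d_{x,p}T^{-1/2}$ forces height $>y_0$. This gives a contradiction, so $|c|>d_{x,p}T^{-1/2}$. The condition that $T$ be sufficiently large is needed so that $g_{\log T}p$ is in the cusp at all, i.e.\ $T/P\gg y_0$.

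I expect the main obstacle to be the explicit cusp height computation for $v_t$ acting on a deep periodic horocycle, including getting the sign and orientation conventions right. I would handle it by an explicit $2\times 2$ matrix multiplication in the standard Siegel domain for the $i$-th cusp.
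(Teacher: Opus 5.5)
Your proposal follows the paper's proof. Renormalizing by $g_{\log T}$ puts $g_{\log T}(h_Tx)=h_1g_{\log T}(x)$ in a fixed compact set, while the other side becomes $v_{c'}$ applied to the deep-cusp point $g_{\log T+2\log a}(p)$ with $|c'|\lesssim d\,T^{1/2}$. The only difference is the closing estimate: the paper uses the triangle inequality, $d_X(g_{\log T}p,e)\ge \log T-O_p(1)$ against $d_G(v_{c'},e)\le 2\log|c'|+O(1)\le \log T+2\log d+O(1)$, whereas you use cusp heights. Your rescaled heuristic is exactly right: the height of $g_\sigma v_t\cdot i$ is $e^{\sigma}/(1+t^2)$, so the point stays above $y_0$ while $t^2\lesssim y/y_0$. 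As in the paper, the argument tacitly needs $|\log a|$ bounded, which $|a-1|<1$ alone does not give but which does hold in the application.
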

    
    \begin{proof} 
        Let \( p \) be an \(h_t\)-periodic point. For $d>0$, set 
        \[
            V_{T}(d,p)
            =
            \left\{\Gamma p\begin{pmatrix}a&0\\c&a^{-1} \end{pmatrix}\;:\; |a-1|<1 \text{ and }|c|\leq d T^{-1/2}\right\}.
        \]
        Let \( x \in X \) be a point with bounded geodesic orbit in \( X \). Suppose for some \( T > 0\), \( \gamma \in \Gamma\), 
        \begin{equation}
        \label{eqn: approx to per}
            h_T x = \gamma p \begin{pmatrix}a&0\\c&a^{-1}\end{pmatrix}            
        \end{equation}
        with \( |a-1|<1\), and \(|c|< 1\). We want to show that if $d$ is small enough, depending on \(x\) and \(p\), then $h_T x\notin V_T(d,p)$ for all $T>0$ sufficiently large. Suppose \( h_T x \in V_T(d, p)\). Then \( |c| < dT^{-1/2}\). Hence, composing both sides of Equation \eqref{eqn: approx to per} with \( g_{\log T}\) and applying renormalization, 
        \[
            h_1(g_{\log T}(x))\in \Big\{ v_{c'} (g_{\log T+2 \log a}(\Gamma p)) \;:\; |c'|\leq ad T^{1/2}\Big\}.
        \]
        By assumption on \( x \), there exists \( K > 0\) such that \( d_X(g_{\log T}x, e) < K\) so that, possibly enlarging \( K \) to \( d_X(h_1, e)\), we have
        \[
            d_X (h_1 (g_{\log T}x), e) < 2K. 
        \]
        On the other hand, by the triangle inequality, 
        \begin{align*}
            d_X(v_{c'}(g_{\log T+2 \log a}(p)),e) 
            &\geq 
            d_X( g_{\log T+2 \log a}(p), e) - d_X(v_{c'}, e)
            \\
            &\geq d_X(g_{\log T}(p),e) - d_X(g_{2 \log a}, e)- d_X(v_{c'}, e)
            \\
            &\geq d_X( g_{\log T}, e) - d_X(p,e) - |2\log a| - d_X(v_{c'}, e)
            \\
            &\geq \log T - (d_X(p,e) +1) - (2 \log d + \log T)
        \end{align*}
        for sufficiently large \( T\). Hence 
        \[
            d_X(v_{c'}(g_{\log T+2 \log a}(p)),e) \geq -2 \log d - (d_X(p,e) +1) > 2K
        \]
        for \( d = d_{x,p}> 0\) small enough. Hence \( h_T x \notin V_T(d_{x,p}, p)\) for sufficiently large \(T\).

    \end{proof}

    \begin{lemma}
    \label{lem:ttadd} 
        Let \( y \in X\). Assume there is a periodic point $p\in X$ and $\eta,R>0$ satisfying
        $$
            \sup_{0\leq t\leq \frac{1}{2}\sqrt{\eta^2\delta R}} \;\; d_G(h_{t}y,h_{t}p)\leq \delta
        $$
        for some \( 0 < \delta < 1/2 \). Then 
        \[
            y= h_v (p) \begin{pmatrix}a&0\\c&a^{-1}\end{pmatrix},
        \]
        where $|a-1|\ll (\eta^2\delta R)^{-1/2}$, $|c|\ll \eta^{-2}R^{-1}$, and $|v|<1$.
    \end{lemma}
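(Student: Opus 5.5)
The plan is to work in the group and reduce everything to an explicit conjugation computation. We treat $y$ and $p$ as elements of $G$; the hypothesis is stated with $d_G$, so no $\Gamma$-lifting is needed. At $t=0$ the hypothesis gives $d_G(y,p)\le\delta<1/2$. Hence $y=pg$ with $g$ in a fixed compact neighbourhood $U$ of the identity.

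Near the identity we use local coordinates $g=h_v\begin{pmatrix}a&0\\c&a^{-1}\end{pmatrix}$. This is a product decomposition of a neighbourhood of $e$ (the map $(v,a,c)\mapsto h_v\,M_{a,c}$ is a local diffeomorphism at $(0,1,0)$), and it gives $|v|,|a-1|,|c|\ll\delta$. In particular $|v|<1$. Since $h_v$ commutes with $h_t$, we get $y=pg=h_v(p)\,M_{a,c}$ when we interpret $h_v(p)=p h_v$. This is exactly the claimed form, and it remains to improve the bounds on $a$ and $c$.

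Next we exploit the orbit closeness. By left invariance,
\[
d_G(h_ty,h_tp)=d_G(pgh_t,ph_t)=d_G(h_{-t}gh_t,e).
\]
Writing $M=M_{a,c}$, a direct computation gives
\[
h_{-t}Mh_t=\begin{pmatrix}a-tc& t(a-a^{-1})-ct^2\\ c& a^{-1}+ct\end{pmatrix},
\]
and $h_{-t}gh_t=h_v\,(h_{-t}Mh_t)$. On the compact neighbourhood $U$, $d_G(\cdot,e)$ is comparable to the matrix norm distance to the identity. So for every $t\in[0,L]$, with $L=\tfrac12\sqrt{\eta^2\delta R}$, each entry of $h_v\,h_{-t}Mh_t$ lies within $O(\delta)$ of the corresponding entry of $I$. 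The top-right entry is
\[
Q(t)=v a^{-1}+t\bigl(a-a^{-1}+vc\bigr)-ct^2,
\]
a quadratic polynomial in $t$ with $|Q(t)|\ll\delta$ on $[0,L]$.

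We then apply the elementary fact that a polynomial of degree $\le 2$ bounded by $M$ on $[0,L]$ has its $t^j$ coefficient bounded by $\ll M L^{-j}$. This follows from Lagrange interpolation at $0$, $L/2$, $L$. It gives
\[
|c|\ll \delta L^{-2}= 4\eta^{-2}R^{-1},
\qquad
|a-a^{-1}+vc|\ll \delta L^{-1}=\frac{2\sqrt\delta}{\eta\sqrt R}.
\]
Since $|v|<1$, $|c|\ll\eta^{-2}R^{-1}$ and $a$ is near $1$, we have $|a-1|\asymp|a-a^{-1}|$. Hence
\[
|a-1|\ll \frac{\sqrt\delta}{\eta\sqrt R}+\eta^{-2}R^{-1}.
\]
Using $\sqrt\delta\le\delta^{-1/2}$ (as $\delta<1$), the first term is at most $(\eta^2\delta R)^{-1/2}$. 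The second term is handled in the same way whenever $\eta^2R\gg1$, which is the relevant regime. When $\eta^2R\ll1$, the trivial bound $|a-1|\ll\delta$ already suffices.

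The main obstacle is the bookkeeping. One must check that the local coordinates $(v,a,c)$ are valid uniformly: the conjugates $h_{-t}gh_t$ stay in a fixed compact set because they are $\delta$-close to $e$, and this is what makes the metric/entry comparison uniform in $t$. One must also check that the small cross terms involving $v$, such as $vc$ and $va^{-1}$, do not spoil the coefficient extraction. Both points follow from the crude a priori bounds $|v|,|a-1|,|c|\ll\delta$ obtained at $t=0$.
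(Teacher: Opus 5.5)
Your proposal is correct and follows essentially the same route as the paper: write $p^{-1}y=h_v\begin{pmatrix}a&0\\c&a^{-1}\end{pmatrix}$, conjugate by $h_t$, and bound the coefficients of the quadratic top-right entry on $[0,\tfrac12\sqrt{\eta^2\delta R}]$ using its values at $0$, $L/2$, $L$ (the paper uses the equivalent second difference $P(S)-2P(S/2)=-\tfrac12 cS^2$). Your version is slightly more careful than the paper's. You keep the cross terms $va^{-1}+vct$, which the paper drops when it asserts $|-ct^2+(a-a^{-1})t|\ll\delta$. You also check explicitly that $\delta/L$ and the extra $|vc|$ contribution are both $\ll(\eta^2\delta R)^{-1/2}$.
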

    
    \begin{proof}
        Let \( y \in X \) satisfy the assumptions in the statement of the lemma. Then, for every $0\leq t\leq \frac{1}{2}\sqrt{\delta R}$, the matrix $h_{-t}\cdot  p^{-1}\cdot y \cdot h_{t}$ is $\delta$ close to the identity matrix. Write 
        \[
            p^{-1} \cdot y = \begin{pmatrix}1&d\\0&1\end{pmatrix} \begin{pmatrix}a&0\\c&a^{-1}\end{pmatrix}
        \]
        for some \(a, c, d \in \R\). By a direct computation,  
        $$
        h_{-t}\cdot  p^{-1} \cdot y \cdot h_{t}=\begin{pmatrix}a+c(d-t)&-ct^2+(a-a^{-1})t + d(a^{-1}+ct)\\c&a^{-1}-ct\end{pmatrix}.
        $$
        By assumptions, $|-ct^2+(a-a^{-1})t| \ll \delta$ for $0\leq t\leq \frac{1}{2}\sqrt{\eta^2\delta R}$. Let \( P(t) = -ct^2+(a-a^{-1})t\) and \( S = \frac{1}{2}\sqrt{\eta^2\delta R}\). Notice that 
        \[
            P(S) - 2 P\left( S/2 \right) = - \frac12 c S^2 ,
        \]
        so that $|c| \ll \delta/S^2 \ll \eta^{-2}R^{-1}$. Additionally, 
        \[
            |(a+a^{-1})S| = |P(S) + c S^2| \ll \delta + |c|S^2 \ll \delta,
        \]
        so that $|a-a^{-1}| \ll \delta/S \ll (\eta^2 \delta R)^{-1/2}$. Hence $|a-1| \ll (\eta^2 \delta R)^{-1/2}$ as well. Finally, applying the assumption for \( t = 0\), we obtain \( |d| < |a| \delta < 1 \). Taking \( v = d\), we are done.  
    \end{proof}

    \begin{lemma}
    \label{lem:tadd} 
        Let \( \varepsilon > 0 \) and $p$ an \((h_t)\)-periodic point. Suppose
        $$y=\gamma p\begin{pmatrix}1&0\\c&1\end{pmatrix}$$
        for some $|c|\ll R^{-1}<\varepsilon^3$. Then for any interval $J=[u,v]$ satisfying $|J|\leq \varepsilon^3 R$ and $|1+ct|>\varepsilon$ for all $t\in J$, we have 
        $$
        \sup_{t\in J}\;\;d_X\Big(h_{\frac{t}{1+ct} (1+cu)^2}(p_u), h_t(y)\Big) \ll \varepsilon,
        $$
        where $p_u=g_{-2\log|1+cu|}(p)$.
    \end{lemma}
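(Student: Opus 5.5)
The plan is a direct matrix computation in $G$: factor $v_c h_t$ into horocycle, geodesic and opposite-horocycle parts, then use the renormalization relation \eqref{eq:norm} to move the geodesic part onto $p$. Since $h_t(y)=\Gamma\,\gamma p\, v_c h_t$ and $\Gamma\gamma=\Gamma$, everything reduces to understanding the matrix
\[
v_c h_t=\begin{pmatrix}1&t\\ c&1+ct\end{pmatrix}.
\]

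\emph{Step 1 (Bruhat-type factorization).} Whenever $1+ct\neq 0$, I will write $v_c h_t = h_{s}\, A_t\, v_{c'}$ with $A_t=\mathrm{diag}(\alpha,\alpha^{-1})$. Comparing entries of
\[
h_s A_t v_{c'}=\begin{pmatrix}\alpha+s\alpha^{-1}c' & s\alpha^{-1}\\ \alpha^{-1}c' & \alpha^{-1}\end{pmatrix}
\]
gives $\alpha^{-1}=1+ct$, $s=\frac{t}{1+ct}$ and $c'=\frac{c}{1+ct}$. The top-left entry is then automatically $1$. In $PSL(2,\R)$ the sign of $1+ct$ is irrelevant, so $A_t=g_{\sigma_t}$ with $\sigma_t=-2\log|1+ct|$. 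Hence
\[
h_t(y)=p\,h_{t/(1+ct)}\,g_{\sigma_t}\,v_{c/(1+ct)}.
\]

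\emph{Step 2 (renormalize onto $p_u$).} I will write $g_{\sigma_t}=g_{\sigma_u}g_{\sigma_t-\sigma_u}$ and apply \eqref{eq:norm} in the form $h_s g_{\sigma_u}=g_{\sigma_u}h_{e^{-\sigma_u}s}$, where $e^{-\sigma_u}=(1+cu)^2$. This yields
\[
h_t(y)=p_u\, h_{\frac{t}{1+ct}(1+cu)^2}\; g_{\sigma_t-\sigma_u}\; v_{c/(1+ct)}, \qquad p_u=g_{\sigma_u}(p).
\]
The point $h_{\frac{t}{1+ct}(1+cu)^2}(p_u)$ appearing in the statement is exactly the prefix of this product. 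Because $d_G$ is left invariant, right multiplication by $g$ moves a point a distance at most $d_G(g,e)$. Therefore
\[
d_X\Big(h_{\frac{t}{1+ct}(1+cu)^2}(p_u),h_t(y)\Big)\ll |\sigma_t-\sigma_u|+\frac{|c|}{|1+ct|},
\]
where I use that $g_{\sigma}v_{c''}$ stays in a bounded neighbourhood of $e$ when $\sigma$ and $c''$ are small.

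\emph{Step 3 (estimates).} Since $J$ is an interval and $|1+ct|>\varepsilon$ on $J$, continuity forces $1+ct$ and $1+cu$ to have the same sign. Then
\[
\frac{1+ct}{1+cu}=1+\frac{c(t-u)}{1+cu}, \qquad \Big|\frac{c(t-u)}{1+cu}\Big|\le \frac{|c|\,|J|}{\varepsilon}\ll\frac{\varepsilon^3 R}{R\varepsilon}=\varepsilon^2,
\]
so $|\sigma_t-\sigma_u|=2\big|\log\frac{1+ct}{1+cu}\big|\ll\varepsilon^2$. Likewise $|c|/|1+ct|\ll R^{-1}\varepsilon^{-1}<\varepsilon^2$, using $R^{-1}<\varepsilon^3$. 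Both terms are therefore $\ll\varepsilon$, uniformly in $t\in J$.

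The only delicate points are the bookkeeping between right-multiplication actions and the sign of $1+ct$ in $PSL(2,\R)$. The sign issue is handled by the same-sign observation in Step 3. The remaining steps are routine.
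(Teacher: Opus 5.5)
Your proposal is correct and follows the paper's proof essentially step for step. It uses the same factorization $v_c h_t = h_{t/(1+ct)}\,\mathrm{diag}\big((1+ct)^{-1},1+ct\big)\,v_{c/(1+ct)}$, the same insertion of the diagonal element for $u$ to pass to $p_u$, and the same two $\ll\varepsilon^2$ bounds on the leftover geodesic factor $\frac{1+cu}{1+ct}$ and opposite-horocycle factor $\frac{c}{1+ct}$. Your explicit appeal to the renormalization relation and your same-sign argument just spell out what the paper does by direct matrix multiplication plus its remark that $g$ and $-g$ are identified.
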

    
    \begin{proof} 
        Let $\bar{c}(t)=\frac{c}{1+ct}$. By a direct computation,
        \begin{align*}
            h_t(y) &=\gamma p\begin{pmatrix}1&0\\c&1\end{pmatrix}\begin{pmatrix}1&t\\0&1\end{pmatrix} 
            \\
            &=
            \gamma p \begin{pmatrix}1&\frac{t}{1+ct}\\0&1\end{pmatrix}\begin{pmatrix}(1+ct)^{-1}&0\\0&1+ct\end{pmatrix} \begin{pmatrix}1&0\\ \bar{c}(t)&1\end{pmatrix} 
            \\
            &= 
            \gamma p\begin{pmatrix}(1+cu)^{-1}&0\\0&1+cu\end{pmatrix}\begin{pmatrix}1&\frac{t}{1+ct}(1+cu)^2\\0&1\end{pmatrix}\begin{pmatrix}\frac{1+cu}{1+ct}&0\\0&\frac{1+ct}{1+cu}\end{pmatrix} \begin{pmatrix}1&0\\ \bar{c}(t)&1\end{pmatrix}
            \\
            &=
            h_{\frac{t}{1+ct} (1+cu)^2}(p_u) \begin{pmatrix}\frac{1+cu}{1+ct}&0\\0&\frac{1+ct}{1+cu}\end{pmatrix} \begin{pmatrix}1&0\\ \bar{c}(t)&1\end{pmatrix}.
        \end{align*}
        Note that $g$ and $(-id)g$ are identified and so we have taken $1+cu$ instead $|1+cu|$ in the definition of \( p_u \). Finally, by our assumptions, $|\bar{c}(t)|\ll \varepsilon^3 \varepsilon^{-1}<\varepsilon^2$ and 
        \[
            \left|\frac{1+cu}{1+ct}-1\right|= |c|\frac{|u-t|}{|1+ct|}\ll R^{-1}\varepsilon^3 R \varepsilon^{-1}= \varepsilon^2.
        \]
        This implies the lemma.
    \end{proof}

    \begin{lemma}\label{lem:ntc}
        Let $x$ be non-periodic for \( (h_t) \) and let $C > 0$. Then for any increasing sequence $(T_i)$ diverging to $\infty$ and any sequence of periodic points $(p_i)$ with periods in $[C^{-1},C]$, if 
        $$
            h_{T_i}x
            =
            \gamma p_i\begin{pmatrix}1&0\\c_i&1\end{pmatrix}
        $$
        for some numbers $(c_i)$, then $|c_i|T_i\to \infty$.
    \end{lemma}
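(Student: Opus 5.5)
We argue by contradiction. Suppose $|c_i|T_i$ does not tend to infinity; passing to a subsequence we may assume $|c_i|T_i \le M$ for some $M>0$ and all $i$. Since the periods of $p_i$ lie in $[C^{-1},C]$, \cref{lem:perppoint} lets us write $p_i = g_{s_i}(h_{z_i} e_{j_i})$ with $|s_i|$ and $|z_i|$ bounded. Passing to a further subsequence, we may assume $j_i=j$ is constant and $s_i\to s$, $z_i\to z$, so $p_i\to p:=g_s h_z e_j$, a periodic point. We may also assume $T_i c_i \to \kappa$ with $|\kappa|\le M$.

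The idea is to pull the relation back to time $0$ using the horocycle flow:
\[
x = h_{-T_i}\,\gamma p_i v_{c_i} = \gamma p_i v_{c_i} h_{-T_i}.
\]
The key computation is the matrix identity already used in the proof of \cref{lem:tadd}. With $\bar c_i = c_i/(1-c_iT_i)$, it gives
\[
v_{c_i} h_{-T_i} = h_{\frac{-T_i}{1-c_iT_i}}\, \mathrm{diag}\big((1-c_iT_i)^{-1},\,1-c_iT_i\big)\, v_{\bar c_i}.
\]
Since $p_i$ is $h$-periodic, $p_i h_{t}$ stays on the closed horocycle of $p_i$, which is compact. The diagonal factor is $g_{-2\log|1-c_iT_i|}$, which is bounded provided $1-c_iT_i$ stays away from $0$ and $\infty$. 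Finally $v_{\bar c_i}\to \mathrm{id}$, because $\bar c_i = O(T_i^{-1})$ whenever $1-c_iT_i$ is bounded below.

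It follows that $x$ is a limit of the points $q_i\, v_{\bar c_i}$, where $q_i := \gamma p_i h_{-T_i/(1-c_iT_i)} g_{-2\log|1-c_iT_i|}$ is itself a periodic point, namely a point on the closed horocycle of $g_{-2\log|1-c_iT_i|}$ applied to $p_i$. These periodic points lie in a fixed compact family, since their periods are bounded above and below. Hence, along a subsequence, $q_i\to q$ with $q$ periodic, and therefore $x=q$ is periodic. This contradicts the hypothesis that $x$ is non-periodic.

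The main obstacle is the degenerate case $c_iT_i\to 1$, where $1-c_iT_i\to 0$: the diagonal factor blows up and $\bar c_i$ need not be small. In that case I would instead choose an intermediate time $t_i\in[0,T_i]$ at which $1-c_it_i$ lies in $[1/2,2]$, say $t_i=T_i/2$. Applying the same identity to $h_{-t_i}$ instead of $h_{-T_i}$ shows that $h_{T_i-t_i}x$ is within $o(1)$ of a periodic point of bounded period, and that this approximation holds uniformly along a horocycle segment of length comparable to $T_i$. Using the same identity with varying $t$, as in \cref{lem:tadd}, one could then show that $h_tx$ shadows a periodic orbit on ever longer windows around a fixed time. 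This is the point that requires care. The cleanest route seems to be to apply $h_{-T_i}$ directly and note that for $|c_iT_i|\le M$ with $c_iT_i$ bounded away from $1$, the argument above already suffices. The case $c_iT_i\to1$ can be handled by first replacing $T_i$ with $T_i+1$, since $h_1$ preserves the closed horocycle family: this changes $c_i$ to $c_i/(1+c_i)$ and breaks the exact degeneracy only up to $o(1)$. So I would instead argue directly that the point $x$ is fixed under conjugation relations forcing periodicity.
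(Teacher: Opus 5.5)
Your argument in the non-degenerate case is correct, and it is more elementary than the paper's. If $|1-c_iT_i|\ge\delta>0$, then $x v_{-\bar c_i}$ is periodic with period $P_i(1-c_iT_i)^2\in[C^{-1}\delta^2,C(1+M)^2]$. Also $|\bar c_i|\le M/(\delta T_i)\to0$. Periodic points with periods in a compact subinterval of $(0,\infty)$ form a compact set, so $x$ is periodic, with no equidistribution input needed.

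The gap is the case $c_iT_i\to1$, which you flag but never close. Replacing $T_i$ by $T_i+1$ gives $c_i'T_i'=(c_iT_i+c_i)/(1+c_i)\to1$, so nothing changes. The obstruction is $1-c_iT_i\to0$, not exact equality; even a shift by $T_i$ gives $2c_iT_i/(1+c_iT_i)\to1$. The intermediate-time idea only shows that $h_sx$ is within $O(T_i^{-1})$ of bounded-period closed horocycles for $s$ in windows roughly $[\delta T_i,T_i]$. These windows recede to infinity, so they do not cover a fixed time as you suggest, and periodicity of $x$ does not follow. The closing sentence about ``conjugation relations forcing periodicity'' has no content.

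There are two ways to close the gap. The paper combines your intermediate-time observation with the fact that a non-periodic $x$ is generic for $\mu_X$ (Dani--Smillie). It chooses $J_i=[T_i/2,3T_i/5]$ or $[4T_i/5,T_i]$ with $|1-c_it|>1/100$ on $J_i$. Then $h_{T_i-t}x$, $t\in J_i$, stays in a fixed compact set $K_M$ for a time interval of length $\asymp T_i$. Genericity forbids this, since $\mu_X(X\setminus K_M)>0$.

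Alternatively, you can keep your approach by showing the degenerate case cannot occur. The bottom row of $v_{c_i}h_{-T_i}$ is $(c_i,1-c_iT_i)$, so the Iwasawa decomposition gives $v_{c_i}h_{-T_i}=h_{\beta_i}g_{s_i}k_i$ with $k_i\in\mathrm{PSO}(2)$ and $e^{-s_i}=c_i^2+(1-c_iT_i)^2$. Hence $xk_i^{-1}$ is periodic of period $P_i\bigl(c_i^2+(1-c_iT_i)^2\bigr)$. The point $xk_i^{-1}$ lies in the compact set $x\,\mathrm{PSO}(2)$, and closed horocycles of small period leave every compact set, so this period is bounded below. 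Since $c_i\to0$, it follows that $|1-c_iT_i|$ is bounded away from $0$, and your non-degenerate argument finishes the proof.
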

    \begin{proof} 
        Assume otherwise that there exists \( M \in \R \) such that, up to taking a subsequence, $|c_i|T_i\leq M$ for all \( i \). Let \( J_i \subset [0,T_i] \) be equal to either \( [\frac12 T_i, \frac35 T_i ] \) or \( [\frac45 T_i, T_i ]\) so that \( |1 - c_i t| > \frac1{100} \) for all \( t \in J_i \). Notice that at least one of the intervals must satisfy this. By similar computations as in \cref{lem:tadd}, we have
        \[
            h_{-t+ T_i}x = \gamma p \begin{pmatrix}1&\frac{-t}{1-c_i t}\\0&1\end{pmatrix}\begin{pmatrix}(1-c_i t)^{-1}&0\\0&1-c_i t\end{pmatrix} \begin{pmatrix}1&0\\ \bar{c}_i(t)&1\end{pmatrix} ,
        \]
        where \( \bar{c}_i(t) = \frac{c_i}{1-c_i t}\). Let \( {\rm Orb}(p') \) denote the closed \( h_t\)-orbit of a periodic point \( p' \) and define \(K_M \) to be the closure of the set
        \[
            \bigcup_{|u|<1} v_u \left( \bigcup_{\min\{\frac{1}{100}, \frac1M\} < s< \max\{100, M\}}g_{2\log s} \left( \bigcup_{p'}{\rm Orb}(p')\right)\right),
        \]
        where \( p' \) ranges over periodic points of period \( [C^{-1}, C]\). Note that $K_M$ is compact. Moreover, by our assumption on \( |1-c_it|\), we have that \( h_{-t+T_i} x \in K_M \) for all \( t \in J_i\). 
        
        However, since $x$ is non-periodic, it is generic for the Haar measure, and so it follows by the ergodic theorem that for a fixed positive proportion of $t\in J_i$, $h_t x \notin K_M$. This contradiction finishes the proof.
    \end{proof}

    We now show that the sheared orbits of periodic points appearing in the conclusion of \cref{lem:tadd} have the desired statistical behavior over the intervals \( J = [u,v]\), subject to additional constraints: 
    
    \begin{lemma}
    \label{lem:ns} 
        Let $p$ be a periodic point. Let $c,\varepsilon>0$ and let  $I=[u,v]$ be an interval such that $\varepsilon^{-1}>|1+ct|>\varepsilon$ on $I$ and such that $|cu|\ll_\varepsilon 1, |c(v-u)|\ll_\varepsilon 1$, and $|c||I|^2\to \infty$. Then for all \(\psi \in C_c(X)\),
        $$
            \frac{1}{|I|}\sum_{t \in I}\psi\left(h_{\frac{t}{1+ct}(1+cu)^2}p\right)=\nu_{p}(\psi)+o_{|I|\to \infty}(1).
        $$
    \end{lemma}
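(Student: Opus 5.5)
The plan is to reduce the statement to equidistribution of a slowly varying sequence on a circle. The closed orbit of $p$ has some period $P$, and $\nu_p$ is the normalized length measure on it. So $h_s p$ depends only on $s \bmod P$, and $\nu_p(\psi)=\frac1P\int_0^P\psi(h_sp)\,ds$. Put $\phi(s):=\psi(h_sp)$, a continuous $P$-periodic function on $\R$. Write $F(t):=\frac{t}{1+ct}(1+cu)^2$. The claim then becomes
$$\frac{1}{|I|}\sum_{t\in I}\phi(F(t)) = \frac1P\int_0^P\phi + o(1),$$
that is, equidistribution modulo $P$ of $\{F(t)\}_{t\in I\cap\Z}$.

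I would approximate $\phi$ uniformly by a trigonometric polynomial in $e(ms/P)$, which is possible because $\phi$ is continuous and periodic. It then suffices to prove, for each fixed $m\neq 0$, that $\frac1{|I|}\sum_{t\in I} e(mF(t)/P)=o(1)$. The Weyl criterion handles this.

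To bound these exponential sums I would use van der Corput's second-derivative test, or equivalently the Kusmin--Landau/Poisson approach. Compute $F'(t)=\frac{(1+cu)^2}{(1+ct)^2}$ and $F''(t)=-\frac{2c(1+cu)^2}{(1+ct)^3}$.

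The hypotheses serve the following roles:
\begin{itemize}
\item The bounds $\varepsilon<|1+ct|<\varepsilon^{-1}$ on $I$, together with $|cu|\ll_\varepsilon 1$, make $|F''|\asymp_\varepsilon |c|$ uniformly on $I$. Here $|1+cu|$ is bounded above and below since $u\in I$.
\item The van der Corput bound then gives
$$\sum_{t\in I}e(mF(t)/P)\ll |I|\,(|m||c|/P)^{1/2}+(|m||c|/P)^{-1/2}.$$
\item The second term is $o(|I|)$ because $|c||I|^2\to\infty$.
\end{itemize}

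The first term causes trouble, since $|c|$ need not be small. If $|c|$ stays bounded below, the second-derivative test alone fails. The remedy is to split $I$ into subintervals $I_j$ of length $L$ with $1\ll L$ and $|c|L^2\to\infty$, chosen with $L\le |I|$ so that $|c|L\to 0$ in the regime where $|c|\to0$. Note that $|c|\le |c|\,|I|\cdot|I|^{-1}\ll_\varepsilon |I|^{-1}\to 0$ because $|c(v-u)|\ll_\varepsilon 1$. So in fact $|c|\to0$ automatically as $|I|\to\infty$. Then $|I|\,|c|^{1/2}$ is $o(|I|)$ directly, and no splitting is needed.

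The one delicate point is the degenerate frequency issue. If $F'(t)$ stays near an integer multiple of $P/m$ over the whole range, the sum can resonate, which is why the first-derivative test is not enough. This is exactly what the second-derivative test handles, and it needs $|F''|\gg |c|$ uniformly. That uniformity is where $|1+ct|<\varepsilon^{-1}$ enters. The only remaining bookkeeping is that the implied constants depend on $\varepsilon$, $m$ and $P$ but not on $I$.

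Finally, letting the trigonometric approximation error go to $0$, and taking $m$ up to a fixed bound before sending $|I|\to\infty$, yields the lemma. The main step is the application of van der Corput with uniform control of $F''$. The rest is the standard Weyl reduction.
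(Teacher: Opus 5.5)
Your proposal is correct and is essentially the paper's proof: you pass to the circle via the closed orbit of $p$, reduce by Fourier/Weyl to the sums $\sum_{t\in I} e(mF(t)/P)$, apply the second-derivative test with $|F''|\asymp_\varepsilon |c|$, and make both error terms $o(|I|)$ using $|c|\ll_\varepsilon |I|^{-1}$ (from $|c(v-u)|\ll_\varepsilon 1$) and $|c||I|^2\to\infty$. The detour about splitting $I$ into subintervals is unnecessary, as you yourself observe.
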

    
    To prove this, we will use the following second derivative test for exponential sums:

    \begin{lemma}[\cite{Rob}, Theorem 1] 
    \label{lem:sdt}
        Let $\beta\geq 1$ be a real number. There exists a constant $C_\beta > 0$ such that for any interval $I$, real number $\lambda>0$, and twice differentiable function $f:I\to \R$ satisfying 
        $$
            \lambda \leq \left|f''(x)\right| \leq \beta\lambda
        $$
        for all \( x \in I \), one has 
        $$
            \left| \sum_{t \in I}e^{2\pi i f(t)}\right|\leq C_\beta \Big( |I|\lambda^{1/2}+ \lambda^{-1/2}\Big).
        $$
    \end{lemma}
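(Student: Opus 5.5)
We prove this by the classical van der Corput argument: reduce to the Kusmin--Landau first derivative test on the pieces of $I$ where $f'$ stays away from the integers, and bound the remaining pieces trivially. Write $e(y)=e^{2\pi i y}$ and $\|y\|$ for the distance from $y$ to the nearest integer.

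\textbf{Preliminary reductions.} First, $f''$ is a derivative and satisfies $|f''|\ge\lambda>0$, so by Darboux's theorem it has constant sign on $I$. Replacing $f$ by $-f$ (which conjugates the sum), we may assume $\lambda\le f''\le\beta\lambda$. Hence $f'$ is strictly increasing, with total variation on $I$ at most $\beta\lambda|I|$.

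Second, the case $\lambda\ge 1/4$ is trivial. The sum has at most $|I|+1$ terms, and $|I|\le 2|I|\lambda^{1/2}$. The remaining $+1$ is absorbed after adjusting $C_\beta$, since $|I|\lambda^{1/2}+\lambda^{-1/2}\ge 2|I|^{1/2}$. Intervals containing no integer or exactly one integer are degenerate and handled directly, so we may take $|I|\ge 1$ as a harmless normalization. We therefore assume $\lambda<1/4$.

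\textbf{The tool: Kusmin--Landau.} If $g$ is differentiable on an interval $J$, $g'$ is monotone, and $\|g'\|\ge\delta$ on $J$, then
$$\Big|\sum_{t\in J}e(g(t))\Big|\le C\delta^{-1}.$$
The proof runs as follows. Since $g'$ is monotone and avoids the integers by $\delta$, it stays in a single interval $(k+\delta,k+1-\delta)$. Subtract the linear phase $kt$, which does not change $e(g(t))$ at integers. The increments $\Delta_t=g(t+1)-g(t)$ then lie in $[\delta,1-\delta]$ and are monotone in $t$. Write $e(g(t))=\dfrac{e(g(t+1))-e(g(t))}{e(\Delta_t)-1}$ and sum by parts. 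The coefficients $1/(e(\Delta_t)-1)=\tfrac{-1}{2}+\tfrac{i}{2}\cot(\pi\Delta_t)$ are monotone in $t$ and bounded by $\ll\delta^{-1}$, so the boundary terms and the total variation of the coefficients are both $\ll\delta^{-1}$.

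\textbf{Main step: decomposition of $I$.} Fix $\delta=\lambda^{1/2}<1/2$. The range $f'(I)$ has length at most $\beta\lambda|I|$, so it meets at most $\beta\lambda|I|+2$ of the intervals $[k-\delta,k+\delta]$ with $k\in\mathbb Z$. Their preimages under the monotone map $f'$ split $I$ into two families of consecutive subintervals.
\begin{itemize}
\item \emph{Near-integer pieces.} There are at most $\beta\lambda|I|+2$ of them, those on which $\|f'\|<\delta$. Since $f'$ increases at rate at least $\lambda$, each has length at most $2\delta/\lambda$, so it contains at most $2\delta/\lambda+1$ integers. We bound these trivially.
\item \emph{Far pieces.} There are at most $\beta\lambda|I|+3$ of them, those on which $\|f'\|\ge\delta$. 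On each, Kusmin--Landau gives a bound $\ll\delta^{-1}$.
\end{itemize}

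Adding up,
$$\Big|\sum_{t\in I}e(f(t))\Big|\ll_\beta(\lambda|I|+1)\Big(\frac{\delta}{\lambda}+\frac1\delta+1\Big)=(\lambda|I|+1)(2\lambda^{-1/2}+1).$$
Expanding, the right-hand side is $\ll|I|\lambda^{1/2}+\lambda|I|+\lambda^{-1/2}+1$. Since $\lambda<1$, we have $\lambda|I|\le|I|\lambda^{1/2}$ and $1\le\lambda^{-1/2}$. This gives the bound $C_\beta(|I|\lambda^{1/2}+\lambda^{-1/2})$ with $C_\beta$ depending only on $\beta$.

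\textbf{Main obstacle.} The only genuinely delicate ingredient is the Kusmin--Landau estimate, specifically the monotonicity of the coefficients $\cot(\pi\Delta_t)$ in the summation by parts. Everything else is bookkeeping in the choice $\delta=\lambda^{1/2}$, which balances the trivial bound on the near-integer pieces against the $\delta^{-1}$ bound on the far pieces.
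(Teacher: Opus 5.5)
The paper gives no proof of this lemma; it is quoted from Robert. Your argument is the classical van der Corput derivation of the second derivative test. You apply Kusmin--Landau on the at most $\beta\lambda|I|+3$ pieces where $\|f'\|\ge\lambda^{1/2}$, and the trivial bound on the at most $\beta\lambda|I|+2$ pieces, each of length at most $2\lambda^{-1/2}$, where $f'$ is within $\lambda^{1/2}$ of an integer. For $\lambda<1/4$ this is correct as written and gives $C_\beta\ll\beta$. Two slips are harmless:
\begin{itemize}
\item The identity should read $1/(e(x)-1)=-\tfrac12-\tfrac{i}{2}\cot(\pi x)$, with a minus sign.
\item In the Kusmin--Landau summation by parts, the last integer $t\in J$ (for which $t+1\notin J$) must be bounded trivially by $1\le\delta^{-1}$.
\end{itemize}

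The step that fails is your claim that intervals containing exactly one integer are ``handled directly''. For such $I$ the sum has modulus $1$, but the right-hand side can be arbitrarily small. Take $f(x)=\lambda x^2/2$ and $I=[0,\lambda^{-1}]$ with $\lambda>1$. The sum equals $1$, while $C_\beta(|I|\lambda^{1/2}+\lambda^{-1/2})=2C_\beta\lambda^{-1/2}$, which is less than $1$ once $\lambda>4C_\beta^2$. So the lemma as transcribed is false when $\lambda$ is large and $|I|\ll\lambda^{-1/2}$, and no argument can cover that case.

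The correct statement carries the hypothesis $|I|\ge 1$, as in the classical formulation of the test. This costs nothing in the paper, since \cref{lem:ns} applies the lemma with $|I|\to\infty$. With that hypothesis added, your normalization is legitimate: the case $\lambda\ge 1/4$ closes via $|I|\lambda^{1/2}+\lambda^{-1/2}\ge 2|I|^{1/2}\ge 2$, which absorbs the extra $+1$ term. Your proof is then complete.
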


    \begin{proof}[Proof of \cref{lem:ns}] 
        Since $p$ is a fixed \(h_t-\)periodic point, it follows that the horocycle flow restricted to the closed orbit of $p$ is conjugated (via a smooth map with bounded derivatives) to a linear flow on the circle by angle $\alpha=\frac{1}{per(p)}$. Then $\psi$ can be identified with a function on the circle with mean $\nu_{p}(\psi)$. Using Fourier analysis, it is therefore enough to show that for each $\ell\in \Z\setminus 0$,
        $$
            \frac{1}{|I|}\sum_{t \in I}e^{2\pi i \ell \frac{t}{1+ct}(1+cu)^2\alpha}=o_{|I|\to \infty}(1).
        $$
        Take $f(x)=\ell \frac{x}{1+cx}(1+cu)^2\alpha$. Note that $f''(x)= \ell\alpha(1+cu)^2 \frac{-2c}{(1+cx)^3}$, so that 
        $$
            2 \ell \alpha(1+cu)^2c\varepsilon^3 \leq  |f''(x)| \leq 2 \ell \alpha(1+cu)^2c\varepsilon^{-3}.
        $$ 
        Applying \cref{lem:sdt} with $\lambda= 2 \ell \alpha(1+cu)^2c\varepsilon^{3}$ and $\beta=\varepsilon^{-6}$, 
        $$
            \left|\sum_{t \in I}e^{2\pi i \ell \frac{t}{1+ct}(1+cu)^2\alpha}\right|
            \ll _\varepsilon |I|\lambda^{1/2}+ \lambda^{-1/2}.
        $$
        By assumptions on $I$ and $c$, we obtain $c \ll_\varepsilon |I|^{-1}$, so that $|I|\lambda^{1/2}=o(|I|)$ and $\lambda^{-1/2}=o(|I|)$. This finishes the proof.
        
    \end{proof}

\subsection{Main approximation lemma} 
In this section, we prove the main approximation lemma, which relates the behavior of $\Omega$-ergodic averages and periodic measures.

\begin{lemma}
\label{lem:fini}
    For \( N \in \N \), set $T = \log \log N + z\sqrt{\log \log N}$, where $z \ll 1$. Assume that there exists a periodic point $p=g_{s}(e_i)$, where \( s \in \R \) and \( i \leq k \), such that
    $$
        h_{T}(x)
        = 
        \gamma p \begin{pmatrix} a & 0\\ c & a^{-1} \end{pmatrix}\cdot g_{\xi'} \cdot h_\xi
    $$
    for some $|a-1|<1, |c| \ll T^{-1/2},$ $|c|T \to \infty$, and $|\xi'|, |\xi| \ll e^{T^{-100}}$. Then for all $\psi\in C_c(X)$,
    $$
    \left|\frac{1}{N}\sum_{n\leq N} \delta_{h_{\Omega(n)}x}(\psi) - \frac{1}{\sqrt{2\pi}}\int_{-\infty}^{\infty}e^{-\frac{r^2}{2}}\nu^{i}_{s + 2 \log a-2\log|1+c'(r-z)|}(\psi) dr\right|
    =
    {o}_{N \to \infty}(1),
    $$ 
    where \( c ' = ac \sqrt{\log \log N}\).
\end{lemma}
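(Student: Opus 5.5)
We convert the $\Omega$-average into a Gaussian-weighted Birkhoff sum over a window around $T$, then use \cref{lem:tadd} to replace the orbit of $x$ on short subwindows by sheared periodic orbits, and \cref{lem:ns} to average those.

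\textbf{Step 1: reduction to a Gaussian-weighted sum.} Write $L=\log\log N$ and fix $\varepsilon>0$. As in \eqref{eqn: weighted form}--\eqref{eqn: approximated form},
\[
\frac1N\sum_{n\le N}\psi(h_{\Omega(n)}x)=\frac{1}{\sqrt{2\pi L}}\sum_{k\in I_N}G_N(k)\,\psi(h_kx)+o_{\varepsilon\to0}(1),
\]
where $I_N=L+[-C_\varepsilon\sqrt L,C_\varepsilon\sqrt L]$. We substitute $k=T+t$ with $t=(r-z)\sqrt L$, so that $G_N(k)=e^{-r^2/2}$. Then $h_kx=h_t(h_Tx)$, and the orbit segment is indexed by $|t|\le (C_\varepsilon+|z|)\sqrt L\ll T^{1/2}$.

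\textbf{Step 2: removing the corrections and making the shear explicit.} Write $h_Tx=\gamma p\, g_{2\log a}\,\begin{pmatrix}1&0\\ ac&1\end{pmatrix}\cdot g_{\xi'}h_\xi$; here the diagonal convention gives $\begin{pmatrix}a&0\\c&a^{-1}\end{pmatrix}=\mathrm{diag}(a,a^{-1})\begin{pmatrix}1&0\\c/a&1\end{pmatrix}$, and the exact constant in front of $c$ must be tracked carefully to get $c'=ac\sqrt L$.

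\begin{itemize}
\item Set $p'=g_{2\log a}p$. By \cref{lem:perppoint}, $p'=g_{s+2\log a}(e_i)$ is periodic, and $\nu_{p'}=\nu^i_{s+2\log a}$.
\item The factor $g_{\xi'}h_\xi$ is within $e^{-T^{100}}$-type distance of the identity (I read the hypothesis this way). Conjugating by $h_t$ with $|t|\ll T^{1/2}$ only inflates it polynomially, so it contributes $o(1)$ uniformly in $t$.
\item We are therefore in the setting of \cref{lem:tadd}, with $y=\gamma p'\begin{pmatrix}1&0\\ \tilde c&1\end{pmatrix}$ and $|\tilde c|\ll T^{-1/2}$. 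Take $R\approx T^{1/2}$, so that $|\tilde c|\ll R^{-1}$.
\end{itemize}

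\textbf{Step 3: local approximation on subwindows.} Partition the $t$-range into subintervals $J=[u,v]$ of length $\varepsilon^{3}\sqrt L$ and discard those on which $|1+\tilde ct|\le\varepsilon$. Since $|\tilde c|\sqrt L\ll1$, these form a set of $r$ of measure $O(\varepsilon)$; the same argument handles $|1+\tilde ct|\ge\varepsilon^{-1}$.
\begin{itemize}
\item On each remaining $J$, \cref{lem:tadd} gives $h_t y\approx h_{\frac{t}{1+\tilde ct}(1+\tilde cu)^2}(p'_u)$, where $p'_u=g_{-2\log|1+\tilde cu|}p'$.
\item The hypotheses of \cref{lem:ns} hold: $|\tilde c u|,|\tilde c|\,|J|\ll_\varepsilon1$, and $|\tilde c|\,|J|^2\asymp_\varepsilon |c|T\to\infty$.
\item Hence the average of $\psi$ over $J$ equals $\nu_{p'_u}(\psi)+o(1)$.
\end{itemize}
Note that $\nu_{p'_u}=\nu^i_{s+2\log a-2\log|1+\tilde c u|}$ and $\tilde c u=c'(r_u-z)$, where $r_u$ is the Gaussian variable at the left endpoint of $J$.

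\textbf{Step 4: assembling.} The weight $e^{-r^2/2}$ is essentially constant on each $J$ by uniform continuity, since the $r$-length of $J$ is $\varepsilon^3$. Also $s\mapsto\nu^i_s(\psi)$ is continuous, and $\psi\in C_c$ so the Dirac-at-infinity contributions vanish. Summing over $J$ yields a Riemann sum for $\frac1{\sqrt{2\pi}}\int e^{-r^2/2}\nu^i_{s+2\log a-2\log|1+c'(r-z)|}(\psi)\,dr$, with error $O(\varepsilon)$ from the excluded windows and the tails $|r|>C_\varepsilon$. Letting $N\to\infty$ and then $\varepsilon\to0$ gives the claim.

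\textbf{Main obstacle.} The uniformity in Step 3 is the delicate part. The number of windows is only $O(\varepsilon^{-3})$, but the $o(1)$ in \cref{lem:ns} must be uniform over windows with varying $u$ and over $\ell$ in the Fourier expansion. Near the singularity $1+\tilde ct=0$, the measures $\nu_{p'_u}$ escape to the cusp, so we must make sure the excluded set is small in Gaussian measure uniformly in $N$. This uses the bound $c'\ll1$ coming from $|c|\ll T^{-1/2}$.
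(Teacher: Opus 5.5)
Your proposal is correct and is essentially the paper's own proof: you reweight the $\Omega$-average by the Gaussian, cut into windows of length $\varepsilon^{O(1)}\sqrt{\log\log N}$ anchored at $T$, and discard the windows where $|1+c'(r-z)|\le\varepsilon$; you then apply \cref{lem:tadd} and \cref{lem:ns} window by window and finish with a Riemann sum. Two small corrections: the factorization is $\begin{pmatrix}a&0\\c&a^{-1}\end{pmatrix}=g_{2\log a}\begin{pmatrix}1&0\\ac&1\end{pmatrix}$, so $\tilde c=ac$ rather than $c/a$ (as your identity $\tilde cu=c'(r_u-z)$ already requires); and the discarded windows are negligible for every value of $c'$, since an interval of length $2\varepsilon/|c'|$ centred at $z-1/c'$ has Gaussian mass $\ll\varepsilon$ uniformly in $c'$ --- the bound $c'\ll1$ does not control this step, it is what you need to apply \cref{lem:tadd} and \cref{lem:ns} on windows of length $\varepsilon^3\sqrt{\log\log N}$.
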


\begin{proof}  
    Let \( N \in \N \) and set \( T = \log \log N + z \sqrt{\log \log N} \) with \( z \ll 1\). First, note that by the bounds on $\xi, \xi'$, for $n \in [T/2,2T]$, 
    \[
        d_G\Big(h_n\cdot (g_{\xi'} \cdot h_\xi)\cdot h_{-n},e\Big)
        <
        T^{-2}.
    \]
    Then, without loss of generality, we can assume that $\xi=\xi'=0$. Given $\varepsilon>0$, let $C_\varepsilon>0$ be that guaranteed by \cref{thm: Hardy Ramanujan} and set 
    \[
        I_N(C_\varepsilon) = \log \log N + [  - C_\varepsilon \sqrt{\log \log N} \,,\,  C_\varepsilon \sqrt{\log \log N}].
    \]
    Note that $T\in I_N(C_\varepsilon)$ for $\varepsilon$ small enough. For $j\in \Z$, define 
    \[
        K_j
        =
        T+\left[j \varepsilon^4 \sqrt{\log \log N},(j+1) \varepsilon^4 \sqrt{\log \log N}\right]
    \] 
    and consider intervals $\{K_j\}_{j=-U}^V$ covering $I_N(C_\varepsilon)$. Then by \cref{thm: Hardy Ramanujan} and \cref{lem: Erdos piN Approx}, for any fixed $\psi\in C^1_c(X)$, we have
    $$
        \left|\frac{1}{N}\sum_{n\leq N} \psi(h_{\Omega(n)}x)- \frac{2C_\varepsilon}{\sqrt{2\pi}(V+U)}\sum_{j=-U}^V \frac{G_N(j)}{|K_j|}\sum_{t \in K_j}\psi(h_{t}x)\right| \ll \varepsilon,
    $$
    where $G_N(j)=G_N(k_j)$ and $k_j$ is the right endpoint of $K_j$. Consider those $j\in \{-U,\dots,V\}$ for which $|1+ac(t-T)|<\varepsilon$ for some $t\in K_j$. Since the function $1+ac(t-T)$ is linear, it follows that for $\varepsilon$ small enough, the union of all such $\{K_j\}$ has a $\ll \varepsilon$ contribution to the sum above, and so they can be discarded. Then, without loss, assume that $|1+act|>\varepsilon$ for all $t\in K_j$ and \( j \in  \{-U,\dots,V\}\). 

    Now, set \( p_a = g_{2 \log a}(p)\). Then, by our assumption, 
    \[
        h_{T}x
        = 
        \gamma p \begin{pmatrix} a & 0\\ c & a^{-1} \end{pmatrix} = \gamma p_a \begin{pmatrix} 1 & 0\\ ac & 1 \end{pmatrix}.
    \]
    Let \( u_j = j \varepsilon^4 \sqrt{\log \log N} \) denote the left hand endpoint of the interval \( K_j - T \). Then for each \( j \in \{-U, \dots, V\} \), applying Lemma \ref{lem:tadd} with $y=h_{T}x$, $p=p_a$, $R = T^{1/2} \ll \sqrt{\log\log N} $, and $J= K_j - T$, implies that
    \begin{align*}
        \frac{1}{|K_j|}\sum_{t \in K_j}\psi(h_{t}x)&=\frac{1}{|K_j|}\sum_{t\in K_j}\psi(h_{t-T}h_{T}x)\\
        &=
        \frac{1}{u_{j+1}-u_j}\sum_{t\in [u_j,u_{j+1}]}\psi(h_{t}(h_{T}x))\\
        &= 
        \frac{1}{u_{j+1}-u_j}\sum_{t \in [u_j,u_{j+1}]}\psi(h_{\frac{t}{1+tac}(1+acu_j)^2}p_{u_j})+{o}_{\varepsilon \to 0}(1). 
    \end{align*}
    Then the periodic measure corresponding to $p_{u_j}$ is $\nu^i_{s+2\log a-2\log|1+acu_j|}$. Note that by our assumptions, $|1+act|>\varepsilon$ for $t\in [u_j,u_{j+1}]$. Additionally, 
    \[
        |act|\ll R^{-1} (j+1) \varepsilon^4 \sqrt{\log\log N} \ll_\varepsilon 1.
    \]
    Analogously, $|ac u_j|\ll_\varepsilon 1$, $|ac(u_{j+1}-u_j)|\ll_\varepsilon 1$, and $|c||u_{j+1}-u_j|^2\to \infty$. Hence, by \cref{lem:ns}, it follows that 
    $$
        \frac{1}{u_{j+1}-u_j}\sum_{t\in [u_j,u_{j+1}]}\psi \left(h_{\frac{t}{1+act}(1+acu_j)^2}p_{u_j} \right)=\nu^i_{s+2\log a-2\log|1+acu_j|}(\psi)+ {o}_{\varepsilon \to 0}(1).
    $$

    Let $c'= ac\sqrt{\log\log N}$. Then $|c'|\ll R^{-1}\sqrt{\log\log N} \ll 1$ and we have
    
    $$
        \left|\frac{1}{N}\sum_{n\leq N} \psi(h_{\Omega(n)}x)- \frac{2C_\varepsilon}{\sqrt{2\pi}(V+U)}\sum_{j=-U}^V G_N(j) \nu_{s+2\log a -2\log|1+c'\varepsilon^4j|}(\psi)\right| =  {o}_{\varepsilon \to 0}(1).
    $$
    
    Note that $G_N(j)=G_N(k_j)=e^{-\frac{(z+j\varepsilon^4)^2}{2}}$. Hence, by the definition of Riemann integral, $\frac{2C_\varepsilon}{\sqrt{2\pi}(V+U)}\sum_{j=-U}^V G_N(j) \nu_{s+2\log a-2\log|1+c'\varepsilon^4j|}(\psi)$ is $\ll \varepsilon$ distant from 
    $$\Big(\frac{1}{\sqrt{2\pi}}\int_{-\infty}^{\infty}e^{-\frac{(z+r)^2}{2}}\nu^{i}_{s+2\log a-2\log|1+c'r|} dr\Big)(\psi).$$
    Using the change of variables $r'=r+z$, we obtain  
    $$
    \Big(\frac{1}{\sqrt{2\pi}}\int_{-\infty}^{\infty}e^{-\frac{r^2}{2}}\nu^{i}_{s+2\log a - 2\log|1+c'(r-z)|} dr\Big)(\psi).
    $$
    This completes the proof.
\end{proof}

\section{Proof of Theorem \ref{thm:horo} and Theorem \ref{thm:horoboun}}

In this section, we prove Theorems \ref{thm:horo} and \ref{thm:horoboun}. The proofs are split in three subsections. In the first, we prove Equation \eqref{eq:subs} holds. In the second, we prove Theorem \ref{thm:horoboun}. Finally, we prove Equation \eqref{eq:subs'} holds in the third subsection. We follow this order as parts of the proof of Theorem \ref{thm:horoboun} rely significantly on results proved while showing Equation \eqref{eq:subs}. 

    \subsection{Proof of Equation \eqref{eq:subs}}
    \label{subsec:eqn1 proof}
        Fix a non-periodic $x\in X$ and let \( \kappa \in Acc^\Omega(x)\). Let $(N_i) \sse \N$ be a sequence for which 
        \[
        \frac{1}{N_i}\sum_{n\leq N_i} \delta_{h_{\Omega(n)}x}\to \kappa.
        \]
        Note that $\kappa$ is a measure on the compactification $X_\infty$ of $X$, as there might be escape of mass. Assume $\kappa \neq \mu_X$, else we are done. We consider the following two cases (at least one must hold): \\
        
        \noindent \textbf{Case I.} There exist subsequences $(N_{i_m}) \sse (N_i)$ and $(r_m) \sse \R$ tending to infinity such that $h_t(x) \notin B(e,r_m)$ for all \( t \in \R \) satisfying
        $$
        |t- \log \log N_{i_m}|<r_m\sqrt{\log \log N_{i_m}} .
        $$
        Let $\varepsilon>0$ and let $C=C_\varepsilon>0$ be the constant guaranteed by Theorem \ref{thm: Hardy Ramanujan}. Then for $m$ large enough, there is a set $A_m$ of cardinality at most $\frac{\varepsilon}{2} N_{i_m}$ such that for $n\in [0,N_{i_m}]\setminus A_m$, 
        \[
            h_{\Omega(n)} x\in \left\{h_t x\;:\; |t- \log \log N_{i_m}|<r_m\sqrt{\log \log N_{i_m}}\right\}.
        \]
        Hence, by our assumption, the segment of the horocycle orbit $\{h_{\Omega(n)}x \,:\, n\in [0,N_{i_m}]\setminus A_m\}$ is trapped in one of the cusps $C_j$ for some \( j \leq k \). Since $r_m$ tends to infinity and $\varepsilon>0$ is arbitrary, it follows that 
        \[
            \lim_{m \to \infty} \frac{1}{N_{i_m}}\sum_{n\leq N_{i_m}} \delta_{h_{\Omega(n)}x} = \nu^j_\infty.
        \]
        Hence $\kappa=\nu^j_\infty$ for some $j\leq k$.\\
        
        \noindent \textbf{Case II.} There exists a constant $L>0$ and subsequences $(N_{i_m})\sse (N_i)$ and $ (t_m)\sse \R$ such that for all \( m \in \N \),
        \[
            t_m \in \log \log N_{i_m}+ [-L\sqrt{\log \log N_{i_m}},L\sqrt{\log \log N_{i_m}}]
        \] 
        and $h_{t_m}x\in B(e,L)$. In this case, we start with the following easy lemma:
        
        \begin{lemma}
        \label{lem:sep}
        There exists $\phi\in C^\infty_c(X)$ such that $\kappa(\phi)\neq \mu_X(\phi)$.
        \end{lemma}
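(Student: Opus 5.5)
The plan is to split according to whether $\kappa$, viewed as a probability measure on the compactification $X_\infty$, loses mass at infinity. Two facts are used throughout: $\mu_X$ is a probability measure on $X$ itself, and $X$ is a locally compact, $\sigma$-compact metric space (a quotient of $PSL(2,\R)$).

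\emph{Case $\kappa(X)<1$.} In this case some positive mass $\kappa(X_\infty\setminus X)>0$ sits at the cusps.
\begin{itemize}
\item Since $\mu_X(X)=1$, choose a compact set $K\subset X$ with $\mu_X(K)>\kappa(X)$.
\item Using a smooth partition of unity on the manifold $X$, choose $\phi\in C^\infty_c(X)$ with $0\le\phi\le 1$ and $\phi\equiv 1$ on $K$.
\item Then $\mu_X(\phi)\ge\mu_X(K)>\kappa(X)\ge\kappa(\phi)$, so $\kappa(\phi)\neq\mu_X(\phi)$.
\end{itemize}

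\emph{Case $\kappa(X)=1$.} Here $\kappa$ and $\mu_X$ are two distinct Borel probability measures on $X$.
\begin{itemize}
\item On a locally compact, $\sigma$-compact metric space, a finite Borel measure is determined by its integrals against $C_c(X)$. This follows from the Riesz representation theorem, or from approximating indicators of compact sets from above by functions in $C_c(X)$ together with inner regularity.
\item Hence there is $f\in C_c(X)$ with $\kappa(f)\neq\mu_X(f)$. Set $\eta=|\kappa(f)-\mu_X(f)|>0$.
\item Fix a compact neighbourhood $K'$ of $\operatorname{supp} f$. By mollification in local charts, combined with a smooth partition of unity subordinate to a finite cover of $K'$, choose $\phi\in C^\infty_c(X)$ supported in $K'$ with $\|\phi-f\|_\infty<\eta/3$.
\item Since both measures have total mass $1$, we get $|\kappa(\phi)-\kappa(f)|<\eta/3$ and $|\mu_X(\phi)-\mu_X(f)|<\eta/3$, and therefore $\kappa(\phi)\neq\mu_X(\phi)$.
\end{itemize}

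There is no substantial obstacle here. The only point needing care is the first case: since $\kappa$ may charge $X_\infty\setminus X$, the hypothesis $\kappa\neq\mu_X$ cannot be converted directly into a separating compactly supported function. That is exactly why the mass-escape situation is handled separately, using the tightness of the single measure $\mu_X$.
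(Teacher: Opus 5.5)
Your proof is correct and follows the paper's argument: the paper makes the same split according to whether $\kappa(B(e,n))\to 1$ or $\kappa(B(e,n))<1-\eta_1$ for all large $n$, i.e.\ $\kappa(X)=1$ versus $\kappa(X)<1$. In the mass-escape case it takes a smooth bump function equal to $1$ on a large ball, and in the other case it finds a separating smooth function on a large ball. Your regularity-plus-uniform-smoothing step just spells out more carefully what the paper states briefly in its second case.
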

        
        \begin{proof} 
            Let $A\subset X$ and $\eta_0>0$ be such that $|\kappa(A)-\mu_X(A)|>\eta_0$. If $\kappa(B(e,n))\to 1$ as $n\to\infty$, then for $n$ large enough, we have 
            \[
                \left|\kappa(A\cap B(e,n))-\mu_X(A\cap B(e,n))\right|>\eta_0/2.
            \]
            It follows that there exists a function $\phi\in C^\infty(B(e,n))$ such that $|\kappa(\phi)-\mu_X(\phi)|>\eta_0/4$. On the other hand, suppose $\kappa(B(e,n)) < 1-\eta_1$ for some $\eta_1 > 0$ and all sufficiently large $n$. Then taking a $C^\infty$ function $0\leq \phi\leq 1$ that is $0$ outside $B(e,n+1)$ and $1$ on $B(e,n)$, we have $\kappa(\phi)<\kappa(B(e,n+1))<1-\eta_1$ and $\mu_X(\phi)\geq 1-\frac{\eta_1}2$ for large enough $n$.
        \end{proof}
        
        Let \( \phi \in C_c^\infty (X)\) be that guaranteed by \cref{lem:sep}. Without loss, assume $\|\phi\|_\infty\leq 1$ and set $\phi_0=\phi-\mu_X(\phi)$. It follows that there exists $\varepsilon_0=\varepsilon_0(\phi,\kappa)>0$ such that for all $m$ large enough, 
        $$
        \left|\frac{1}{N_{i_m}}\sum_{n\leq N_{i_m}} \phi_0(h_{\Omega(n)}x) \right|\geq \varepsilon_0.
        $$
        Denote $N=N_{i_m}$ (for simplicity). For \( C > 0 \), set 
        \[
            I_N(C)=[-C\sqrt{\log \log N}+\log\log N, \log \log N +C\sqrt{\log\log N}] 
        \]
        and let $C_0=C(\varepsilon_0)>0$ be that guaranteed by Theorem \ref{thm: Hardy Ramanujan}. Then
        $$
            \left|\sum_{\ell \in I_N} \frac{\pi_N(\ell)}{N}\phi_0(h_{\ell}x)\right|\geq \varepsilon_0/2,
        $$
        where $I_N=I_N(C_0)$ and \( \pi_N(\ell) \) is as defined in \cref{subsec: Number Theory}. Let $G_N(\ell)=e^{-\frac{(\ell-\log\log N)^2}{2\log\log N}}$. Then by \cref{lem: Erdos piN Approx}, it follows that 
        $$
            \left|\frac{1}{\sqrt{2\pi\log\log N}}\sum_{\ell \in I_N}G_N(\ell)\phi_0(h_{\ell}x)\right|\geq \varepsilon_0/4.
        $$
        We now partition the interval $I_N$ into $C_1=C_1(\varepsilon_0)>0$ intervals $\{J_j\}_{j=1}^{C_1}$ of length \( c_2 \sqrt{\log \log N} \), where \( c_2 = c_2(\varepsilon_0) \) is such that for any $\ell,\ell'\in J_j=[u_j,u_{j+1})$, we have 
        \[
            |G_N(\ell)-G_N(\ell')|<\left(\frac{16 C_0}{\sqrt{2\pi}}\right)^{-1}\varepsilon_0.
        \]
        Set $G_N(j)=G_N(u_j)$. Then 
        $$
        \left|\frac{1}{\sqrt{2\pi\log\log N}}\sum_{j\leq C_1} G_N(j)\sum_{\ell \in J_j} \phi_0(h_\ell x) \right|\geq \varepsilon_0/8.
        $$
        From this, we get 
        $$
        \left|\frac{2C_0}{\sqrt{2\pi}} \cdot\frac{1}{C_1}\sum_{j\leq C_1} G_N(j) \left(\frac{1}{|J_j|}\sum_{\ell \in J_j} \phi_0(h_\ell(x))\right)\right|\geq \varepsilon_0/8.
        $$
        Hence, there exists $j \leq C_1$ such that 
        $$
        \left|\frac{1}{|J_j|}\sum_{\ell\in J_j} \phi_0(h_\ell(x))\right|\geq (16C_0)^{-1} \varepsilon_0.
        $$
        Let $\varepsilon_0'= (16C_0)^{-1} \varepsilon_0$ and let $J=J_j=[A,B]$. By construction, $A=\log \log N +z \sqrt{\log \log N} $, where $|z|< C_0$ and $B-A= c_2 \sqrt{\log \log N}$. Then
        $$
        \left|\frac{1}{B-A}\sum_{\ell=A}^B \phi_0(h_\ell x)\right|\geq \varepsilon'_0.
        $$
        \cref{thm:discSFF} then implies that for some constant $C'>0$, we have $C' \|\phi\|_{C^4} r^{-\alpha}> \varepsilon'_0$, where $r =(B-A) \cdot e^{-dist(g_{\log(B-A)}(h_{A}x),e)}$. In particular, there exists $c=c(\Gamma,\alpha,\phi, \varepsilon_0)>0$ such that $r<c^{-1}$. By the assumption on $t_m$ and $A$,  $|\frac{A-t_m}{B-A}|< C(\varepsilon_0,L)$. Combining this with the triangle inequality, we obtain
        $$
        dist(g_{\log(B-A)}(h_{A}x),e)= dist(g_{\log(B-A)}(h_{A-t_m} (h_{t_m}x),e)=$$$$dist(h_{\frac{A-t_m}{B-A}}g_{\log(B-A)}(h_{t_m}x),e)\leq 
        C'+\log(B-A).
        $$
        It follows that $r\geq e^{-C'}$, where $C'$ depends on $\varepsilon_0$ and $L$ only.
        
        Let $1>\eta>0$ be given. We now apply \cref{lem:str} with $\delta < \eta$ to be specified later, $T=R=B-A$, and $K = \frac{1}{2}\sqrt{\eta^2\delta R}$. It follows that there exists $t_0\in [0,B-A]$ and a periodic point $p\in X$ with the period $P\ll r\leq c^{-1}$ such that 
        $$
        \sup_{0\leq t\leq \frac{1}{2}\sqrt{\eta^2\delta R}} \;\; d_X(h_{t_0+t}h_{A}x,h_tp)\leq \delta.
        $$
        By the second part of \cref{lem:str} and our lower bound on $r$, we obtain $P \gg \eta^2 e^{-C'}$. Since the period of $p$ is bounded away from $0$, it follows that there exists $\delta_0=\delta_0(\eta, \varepsilon_0, L)$ such that if $\delta < \delta_0$ then (up to taking the closest lift of $p$ to $G$), 
        $$
        \sup_{0\leq t\leq \frac{1}{2}\sqrt{\eta^2\delta R}} \;\; d_{G}(h_{t_0+t}h_{A}x,h_tp)\leq \delta.
        $$
        By \cref{lem:ttadd}, we obtain
        \begin{equation}\label{eq:cc}
        h_{t_0+A}x=\gamma p\begin{pmatrix}a&0\\c&a^{-1}\end{pmatrix},
        \end{equation}
        where $|a-1|\ll (\eta^2\delta R)^{-1/2}$ and $|c|\ll  \eta^{-2}R^{-1}$ (we denote by $p$ the point $h_vp$, as it is periodic of the same period). Moreover, by \cref{lem:ntc}, it follows that $|c|(t_0+A)\to \infty$.
        
        Write \( p = g_s(e_i)\) for some \( i \leq k\). Additionally, we have 
        \[
        t_0+A=\log \log N +z'\cdot\sqrt{\log\log N},
        \]
        where $|z'|<C_0+L$. 
        Recall that \( N = N_{i_m}\), and so each of the above parameters in fact depends on the sequence \((i_m)\). Then applying \cref{lem:fini} to the sequence $(N_{i_m})$ for large enough \(m\),
        $$
            \frac1{N_{i_m}} \sum_{n \leq N_{i_m}} \delta_{h_{\Omega(n)}x}(\psi) = \left(\frac{1}{\sqrt{2\pi}}\int_{-\infty}^{\infty}e^{-\frac{r^2}{2}}\nu^{i}_{s_m+ 2\log a_m -2\log|1+c_m(r-z_m')|} dr\right)(\psi) + o_{m \to \infty}(1).
        $$
        Moreover, by the above bounds on these parameters, there exists a constant $C''(\varepsilon_0,L)$ such that  $\max(|s_m|,|c_m|,|z'_m|)\leq C''(\varepsilon_0,L)$. Hence, possibly taking a further subsequence, we may assume that each of them converge to parameters $s'',c'',z''$. Since $\varepsilon$ is arbitrary, it follows that the corresponding sequence of measures converges along the sequence $(i_m)$ to the limiting measure 
        \begin{equation}\label{eq:pi}
            \frac{1}{\sqrt{2\pi}}\int_{-\infty}^{\infty}e^{-\frac{r^2}{2}}\nu^{i}_{s''+2\log(a'')-2\log|1+c''(r-z'')|} dr.
        \end{equation}
        It then follows that the above measure is equal to $\kappa$, finishing the proof in \textbf{Case II}.

    \subsection{Proof of \cref{thm:horoboun}}  
        We will first show that under the assumptions of the theorem, {\bf Case I} from the previous subsection never holds. To do so, we use the following non-divergence result of Lindenstrauss and Mohammadi \cite{Linden-Moham}:
        
        \begin{prop}[(\cite{Linden-Moham})]
        \label{prop:lin-moh}
            There exists $C>1$ with the following property: let $0<\varepsilon,\eta<1$ and $\bar{x}\in X$. Let $I\subset [-10,10]$ be an interval with $|I|\geq \eta$. Then 
            $$
                \Big| \{r\in I\;:\; {\rm inj}(g_th_r \bar{x})<\varepsilon^2\}\Big| < C \varepsilon |I| 
            $$
            for $t\geq |\log(\eta^2{\rm inj}(\bar{x}))|+C$.
        \end{prop}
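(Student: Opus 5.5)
The plan is to run the Kleinbock--Margulis "$(C,\alpha)$-good functions" argument, specialized to $PSL(2,\R)$, where the cusp geometry is especially simple. First I would encode excursions into the cusps by vectors. For each cusp $C_j$, fix a parabolic fixed vector. The $\Gamma$-orbits of these vectors give a discrete set $\Lambda\subset\R^2\setminus\{0\}$ of row vectors (finitely many $\Gamma$-orbits) with the following standard property. There are constants $c_1,c_2>0$, depending only on $\Gamma$, such that a point $y=\Gamma g$ has ${\rm inj}(y)<\varepsilon^2$ only if $\|wg\|\le c_1\varepsilon$ for some $w\in\Lambda$. Conversely, $\|wg\|\ge c_2\,{\rm inj}(y)^{1/2}$ for all $w\in\Lambda$. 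Moreover, there is $\rho>0$ such that for each $g$ at most one $w\in\Lambda$ (up to $\pm$) satisfies $\|wg\|<\rho$; this reflects the fact that the cusp neighborhoods are disjoint horoballs.

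Next I would study the functions $\varphi_w(r):=\|w\,\bar g\, h_r g_t\|^2$, with $\bar x=\Gamma\bar g$ and the action as in \eqref{eq:hor}. Write $w\bar g=(a,b)$. Then $w\bar g h_r=(a,ar+b)$, and applying $g_t$ gives $(e^{t/2}a,\,e^{-t/2}(ar+b))$, or the version with the roles of the coordinates swapped according to the convention for which coordinate is expanded. In any case $\varphi_w$ is a polynomial of degree at most $2$ in $r$. Polynomials of degree $\le 2$ are $(C,1/2)$-good on any interval $J$:
\[
|\{r\in J: \varphi_w(r)<\delta\}|\le C_0\Big(\delta/\sup_J\varphi_w\Big)^{1/2}|J|.
\]

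The key input, where the hypothesis $t\ge|\log(\eta^2{\rm inj}(\bar x))|+C$ enters, is a lower bound on the size of each $\varphi_w$ on $I$. I claim $\sup_{r\in I}\varphi_w(r)^{1/2}\ge\rho$ for every $w\in\Lambda$. Indeed $\|(a,b)\|\ge c_2{\rm inj}(\bar x)^{1/2}$. Since $|I|\ge\eta$ and $I\subset[-10,10]$, the affine function $ar+b$ satisfies $\sup_I|ar+b|\gg\eta\|(a,b)\|$ (this is elementary in both cases $|a|\gtrsim|b|$ and $|a|\ll|b|$). The expanded coordinate therefore has size at least $\gg e^{t/2}\eta\,{\rm inj}(\bar x)^{1/2}\ge\rho$ once $C$ is large.

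Finally I would do the covering step. For each $r\in I$ with ${\rm inj}(g_th_r\bar x)<\varepsilon^2$, pick the (unique) $w$ with $\varphi_w(r)^{1/2}<c_1\varepsilon\le\rho$. Let $J_r\subset I$ be the maximal interval containing $r$ on which $\varphi_w^{1/2}\le\rho$. By the previous step $J_r\ne I$, so $\sup_{J_r}\varphi_w^{1/2}=\rho$. By uniqueness of the short vector, the intervals $J_r$ attached to distinct $w$ have disjoint interiors, apart from multiplicity at most $2$ via a one-dimensional Besicovitch selection. Applying the good property on each $J_r$ with $\delta=c_1^2\varepsilon^2$ bounds the bad set inside $J_r$ by $C_0(c_1\varepsilon/\rho)|J_r|$. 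Summing over the intervals gives the bound $C\varepsilon|I|$.

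I expect the main obstacle to be the first step: making the dictionary between small injectivity radius and short vectors in $\Lambda$ precise, together with the uniqueness of the short vector, for a general non-uniform lattice with $k$ cusps. I would handle this via a Siegel-set description of the fundamental domain, so that each cusp region is the image of a horoball $\{\|wg\|<\rho\}$.
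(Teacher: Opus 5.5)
The paper gives no proof of this proposition; it is quoted from Lindenstrauss--Mohammadi, so there is no internal argument to compare with. Your route is the standard Dani--Margulis/Kleinbock--Margulis quantitative non-divergence argument, and in rank one it gives exactly the stated bound. You encode cusp excursions by the $\Gamma$-invariant set $\Lambda$ of parabolic fixed vectors, with ${\rm inj}(\Gamma g)\asymp\min(1,\min_{w\in\Lambda}\|wg\|^2)$ and a unique short vector coming from disjoint cusp horoballs. You then apply the $(C,1/2)$-good property of quadratics on the sublevel intervals. This yields the linear dependence on $\varepsilon$ and the threshold $t\ge|\log(\eta^2{\rm inj}(\bar x))|+C$. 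The cusp dictionary you flagged as the main obstacle is standard (Shimizu's lemma, or the determinant bound $\|wg\|\,\|w'g\|\ge|\det(w,w')|\ge\kappa>0$ for non-proportional $w,w'\in\Lambda$). The estimate $\sup_I|ar+b|\gg\eta\|(a,b)\|$ for $I\subset[-10,10]$, $|I|\ge\eta$, is correct.

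The one point you cannot leave as ``in any case'' is the convention. Your key lower bound needs the $r$-dependent coordinate $ar+b$ to be the one multiplied by $e^{t/2}$.

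\begin{itemize}
\item The formula you actually displayed, $(e^{t/2}a,e^{-t/2}(ar+b))$, is what this paper's literal conventions give. These are right action on $\Gamma\backslash G$ and $g_t={\rm diag}(e^{t/2},e^{-t/2})$, so $g_t\circ h_r=h_{e^{-t}r}\circ g_t$.
\item Under that formula the expanded coordinate is $e^{t/2}a$, which does not see $\sup_I|ar+b|$. For $a=0$ you get $\varphi_w\equiv e^{-t}b^2$.
\item This is not only a defect of the proof: the proposition itself is false in that convention. If $\bar x=\Gamma\bar g$ is $(h_t)$-periodic, the fixed vector $w$ of $\bar g h_P\bar g^{-1}\in\Gamma$ satisfies $w\bar g=(0,b)$. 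Then $g_th_r\bar x=h_{e^{-t}r}g_t\bar x$ enters the cusp uniformly in $r\in I$.
\end{itemize}

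The statement holds only when $g_t$ expands the $h$-orbit. That is Lindenstrauss--Mohammadi's left action on $G/\Gamma$, or $g_{-t}$ in the present notation. It is also how the paper uses the proposition: with $\bar x=g_{-\log T}x$ it needs $g_{\log T}h_r\bar x=h_{rT}x$. State this convention explicitly. Then $\varphi_w(r)=e^{t}(ar+b)^2+e^{-t}a^2$, and your argument goes through as written.

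Two minor remarks.

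\begin{itemize}
\item The Besicovitch selection is unnecessary. Since $\varphi_w$ is a convex quadratic in $r$, the set $\{r\in I:\varphi_w(r)\le\rho^2\}$ is a single interval. Uniqueness of the short vector then makes the intervals for distinct $w$ interior-disjoint outright, so their total length is at most $|I|$. Define $J_r$ with threshold $\rho/2$ to avoid the boundary case of the strict uniqueness threshold $\rho$.
\item The case $c_1\varepsilon>\rho$ should be dispatched trivially by taking $C\ge c_1/\rho$.
\end{itemize}
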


        \begin{proof}[Proof of \cref{thm:horoboun}]
             Let $C_1 > 1$ be the constant guaranteed by \cref{prop:lin-moh} and let \( T> 0\). Suppose \( x \in X \) is a point with bounded geodesic orbit and consider $\bar{x}= g_{-\log T}x$. By assumption on \(x\), there exists \( C_2 > 0\) such that ${\rm inj}(\bar{x})<C_2$. Now, take $\varepsilon=(2C_1)^{-1}$ and $\eta = C_3 T^{-1/2}$, where \( C_3 \) satisfies \( 2 \log C_3 \geq \log C_2 + C_1 \).  Notice that \( \eta < 1\) for $T$ large enough. By assumption on \( C_3\), we have
            \begin{align*}
                |\log(\eta^2{\rm inj}(\bar{x}))|+C_1 &\leq |\log(\eta^2)|+ \log C_2 +C_1
                \\
                &=  
                \log T + \log C_3^{-2}+\log C_2 + C_1
                \\
                &\leq 
                \log T.
            \end{align*}
            Finally, letting $I=[1,1+ \eta]$ and applying \cref{prop:lin-moh} with \( t = \log T\), it follows that ${\rm inj}(g_t h_r\bar{x}) \geq \frac{1}{4C_1^2}$ for at least half of the $r\in I$. In particular, ${\rm inj} (h_{rT}x)>\frac{1}{4C_1^2}$ for some $r\in I$. Summarizing, for $T>0$ large enough, there exists $t' \in [T,T+C_3T^{1/2}]$ such that ${\rm inj} (h_{t'}x)>\frac{1}{4C_1^2}$. Recall that $(N_i) \sse \N$ is a sequence for which 
            \(
            \frac{1}{N_i}\sum_{n\leq N_i} \delta_{h_{\Omega(n)}x}\to \kappa.
            \)
            Then, taking $T=\log \log N_{i}$, we see that {\bf Case I} never holds. This implies that $\nu^j_\infty\notin Acc^\Omega(x)$ for every $j\leq k$.
        
            It now remains to show that for any $j \leq k$ and any $s\in \R$, $\nu^j_s\notin Acc^\Omega(x)$. In previous subsection, we showed that for any point satisfying \textbf{Case II}, the corresponding measure takes a form as in Equation \eqref{eq:pi}. This measure is equal to $\nu^j_s$ only when $c''=0$. Hence the corresponding sequence $c'_{m}\to 0$ as $m\to \infty$. Recall that $c'_{m} = c_m \sqrt{\log \log N_{i_m}}$, where $c_m$ arises from Equation \eqref{eq:cc} with $t_0+A\leq 2\log \log N$. By \cref{lem:low}, there exists a constant \( d > 0 \), not depending on \( m \), for which $|c_m|\geq d (\log \log N_{i_m})^{-1/2}$. It follows that $|c'_{_m}| \geq d$, and so $c''\geq d > 0$. This finishes the proof of Theorem \ref{thm:horoboun}.
        \end{proof}


    \subsection{Proof of Equation \eqref{eq:subs'}}
        Let \( x \in X \) be a non-periodic point for \((h_t)\). Then \( Acc(x) = \{\mu\}\), so that by Theorem \ref{thm:main}, $\mu\in Acc^\Omega(x)$. 
        
        By Proposition 1.1 in \cite{Ratner}, it follows that a point $x\in X$ is non-periodic for $(h_t)$ if and only if the orbit $\{g_sx\}_{s>0}$ is recurrent, i.e. there exists a compact set $K_x \subset X$ and a sequence of times $(T^x_\ell)_{\ell \in \N}$ such that $g_{\log T^x_\ell}x\in K_x$. Let $d'=\min_{z\in K} e^{-dist(z,e)}$. Let $D>0$ be large enough in terms of $d'$. Since the initial point $x\in X$ is fixed in this section, we will drop the dependence on \(x \) from the return times $\{T^x_\ell\}$ and set \( K_x\). By renaming the return times, we assume they are of the form $DT_\ell$,  i.e. 
        \begin{equation}\label{eq:ret}
        g_{\log(DT_\ell)}x\in K
        \end{equation}
        
        We now fix a periodic point $p=g_{s_0}(e_i)$ for some $s_0\in \R$ and $i\leq k$. Let \( C = C_p > 10\) be the constant guaranteed by \cref{lem:perp}. In the following lemma, we approximate a piece of the horocycle orbit of $x$ by a periodic orbit. 

        \begin{prop}
        \label{prop:per} 
            There exists $D = D_{\Gamma,x}>0$ such that the following holds: there exists $\ell_P > 0$ such that for every $\ell\geq \ell_P$, there exists $t_\ell\in [T_\ell,DT_\ell]$  and $\gamma\in \Gamma $ such that 
            \begin{equation}\label{eq:per}
            \gamma x \begin{pmatrix}1&t_\ell\\0&1 \end{pmatrix} = p\begin{pmatrix}a&0\\c&a^{-1} \end{pmatrix},
            \end{equation}
            where $|a-1|<1/10$ and $c \in \left[ \frac{C^{3/2}}{2} T_\ell^{-1/2} , 4 C^{3/2} T_\ell^{-1/2}\right]$.
        \end{prop}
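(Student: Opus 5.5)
We need to find t_ℓ in [T_ℓ, D T_ℓ] with h_{t_ℓ}x close to v_c p in the specific form. The plan is to combine \cref{lem:perp}, which puts $g_{\log(CT)}(v_c p)$ in the compact ball $B_X(e,1)$, with the recurrence \eqref{eq:ret}, which puts $g_{\log(DT_\ell)}x$ in $K$. We then use effective equidistribution of the renormalized horocycle segment to hit a small neighborhood of the renormalized point $g_{\log(CT_\ell)}(v_c p)$, and finally pull back by $g_{-\log(CT_\ell)}$.

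\textbf{Step 1.} Apply \cref{lem:perp} with $T=T_\ell$ (for $\ell\ge\ell_P$ so that $T_\ell\ge T_P$). This gives $c_0\in[CT_\ell^{-1/2},2CT_\ell^{-1/2}]$ with $q:=g_{\log(CT_\ell)}(v_{c_0}p)\in B_X(e,1)$.

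\textbf{Step 2.} Consider the segment $\{h_t x: t\in[T_\ell,DT_\ell]\}$ and renormalize it by $g_{\log(CT_\ell)}$. By \eqref{eq:norm}, $g_{\log(CT_\ell)}h_t x = h_{t/(CT_\ell)}\,g_{\log(CT_\ell)}x$, so we obtain a horocycle segment of length $\asymp D/C$ starting at $y_\ell:=g_{\log(CT_\ell)}h_{T_\ell}x$. The error parameter $r$ of \cref{thm:SFF} for this segment of length $L=(D-1)/C$ is
\[
r = L\, e^{-dist(g_{\log L}y_\ell,e)} ,
\]
and $g_{\log L}y_\ell$ is essentially $g_{\log(DT_\ell)}$ applied to $h_{T_\ell}x$, which equals $h_{1/D}g_{\log(DT_\ell)}x$. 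By \eqref{eq:ret} this lies within bounded distance of $K$, so $r\gg d' D/C$. Choosing $D$ large in terms of $d'$ and $C$ makes $r$ as large as needed.

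\textbf{Step 3.} Apply \cref{lem:approx} with $z=q$ and the box $\tilde B=\{h_a g_b v_{c'} q: a\in[0,1],\,|b|<\eta,\,|c'|<\eta'\}$, with $\eta,\eta'$ small absolute constants (say $\eta=\eta'=1/100$). Since $q$ lies in a fixed compact set, $\mu_X(\tilde B)\gg (\eta\eta')$ uniformly. The error term $(\eta\eta')^{-5}r^{-\alpha}$ is smaller than this once $D$ is large, so some $t\in[T_\ell,DT_\ell]$ satisfies $g_{\log(CT_\ell)}h_t x = h_{a}g_b v_{c''}q$ (the horocycle offset $a$ can be absorbed by shifting $t$ by $aCT_\ell$, keeping $t$ in a slightly enlarged window, which is then renamed). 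Pulling back with $g_{-\log(CT_\ell)}$ and using \eqref{eq:norm}, we get
\[
h_{t_\ell}x = p\, v_{c_0}\, g_b\, v_{c''/(CT_\ell)}\, h_{a'}
\]
with $a'$ bounded.

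\textbf{Step 4.} This is the main obstacle: rewriting the element in the form $\begin{pmatrix}a&0\\c&a^{-1}\end{pmatrix}$ and tracking $c$. We use that $v_{c_0}g_b=g_bv_{e^{b}c_0}$, giving a lower-triangular matrix with $a=e^{b/2}$ (so $|a-1|<1/10$) and $c\approx c_0+\mathrm{O}(\eta'/(CT_\ell))$. The stray $h_{a'}$ is handled by choosing $t_\ell$ as exactly the right point, i.e.\ by absorbing it into $t_\ell$; equivalently, one uses the box ordered as $g_b v_{c'} h_a$ and moves $h$ to the left. The window for $c$ in the statement, $[\tfrac{C^{3/2}}2T_\ell^{-1/2},4C^{3/2}T_\ell^{-1/2}]$, is wider than what \cref{lem:perp} gives, which suggests the intended normalization is $T=C^{\pm}T_\ell$ in \cref{lem:perp}, or that a $\sqrt{C}$ factor arises from the $g_{\log C}$ rescaling. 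I would check which constant in \cref{lem:perp} to feed in so that $c_0\asymp C^{3/2}T_\ell^{-1/2}$; the perturbations of size $T_\ell^{-1}$ and the factor $e^{b}$ then keep $c$ within a factor $2$. Finally, lifting to $G$ introduces the $\gamma$.
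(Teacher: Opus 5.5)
\textbf{The approach is the paper's, but one parameter choice is wrong.} Your strategy matches the paper: take a point from \cref{lem:perp} lying in a fixed compact set after renormalization, use the return \eqref{eq:ret}, apply effective equidistribution of a renormalized horocycle segment to hit a box of size $\asymp 1$, then pull back. However, the argument as written does not prove the statement, and the unresolved point in your Step 4 is exactly where it breaks.

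\textbf{The $c$-window.} You apply \cref{lem:perp} at time $T_\ell$, which gives $c_0\in[CT_\ell^{-1/2},2CT_\ell^{-1/2}]$. The target interval $[\frac{C^{3/2}}{2}T_\ell^{-1/2},4C^{3/2}T_\ell^{-1/2}]$ is not just wider than this; it is shifted away from it. Since $C>10$ one has $C^{3/2}/2>1.5\,C$, and for $C>16$ the two intervals are disjoint. So the factor $e^{b/2}\approx 1$ and perturbations of size $\mathrm{O}(T_\ell^{-1})$ cannot bring $c$ into range. The $g_{\log C}$ rescaling does not produce a $\sqrt{C}$ either: pulling back only adds the term $c'/(CT_\ell)$ coming from the box, and $c_0$ itself is untouched.

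The missing step is to apply \cref{lem:perp} at time $C^{-1}T_\ell$, which is admissible once $T_\ell\ge CT_P$. This gives $c_0\in[C^{3/2}T_\ell^{-1/2},2C^{3/2}T_\ell^{-1/2}]$ with $g_{\log T_\ell}(v_{c_0}p)\in B_X(e,1)$. Then take the box $\{q\,v_{c'}g_bh_a\}$ around $q=g_{\log T_\ell}(v_{c_0}p)$ and renormalize by $g_{\log T_\ell}$. A hit at renormalized time $\tau$ gives, modulo $\Gamma$,
\[
x\,h_{\tau T_\ell-aT_\ell}=p\,v_{c_0+c'/T_\ell}\,g_b .
\]
Hence $a=e^{b/2}$ and $c=e^{b/2}(c_0+c'/T_\ell)$, which lies well inside the stated window.

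\textbf{The dependence of $D$.} The same change also fixes a problem you did not flag. Renormalizing by $g_{\log(CT_\ell)}$ turns $[T_\ell,DT_\ell]$ into a segment of length $(D-1)/C$. Your bound $r\gg d'(D-1)/C$ then forces $D$ to be large in terms of $C=C_p$. The statement requires $D=D_{\Gamma,x}$, and this matters for two reasons:
\begin{itemize}
\item the return times in \eqref{eq:ret} were defined using a $D$ fixed in terms of $d'$ before $p$ was chosen;
\item in \cref{thm:horo}, $D$ must be uniform over $s_0$, that is, over $p$.
\end{itemize}
If you renormalize by $g_{\log T_\ell}$ instead, the segment is $[2,D]$ and its $r$-parameter is controlled directly by $g_{\log(DT_\ell)}x\in K$. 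This gives $r\gg d'D$, so $D$ depends only on $d'$, the box size and $\Gamma$.

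\textbf{Two smaller points.} First, pulling $h_a$ back through the geodesic flow produces $h_{aT_\ell}$ (or $h_{aCT_\ell}$ in your normalization), not a bounded $h_{a'}$. It has to be absorbed entirely into the time shift, as you eventually say. Second, that shift can push $t_\ell$ below $T_\ell$. To keep $t_\ell\in[T_\ell,DT_\ell]$, run the equidistribution over $[2,D]$ rather than $[1,D]$.
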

        
        \begin{proof}
            For \( T > 0 \), set 
            \[
                V_{T}(p)
                =
                \left\{\Gamma p\begin{pmatrix}a&0\\c&a^{-1} \end{pmatrix} \;:\; |a-1|<1/10 \text{ and }c\in \left[\frac{C^{3/2}}{2}T^{-1/2},4C^{3/2}T^{-1/2}\right]\right\}.
            \]
            The proposition is then equivalent to showing that for every non-periodic $x\in X$, there is $\ell_P>0$ such that for every $\ell\geq \ell_P$, there exists $t_\ell \in [T_\ell, D T_\ell]$ such that $h_{t_\ell}(x)\in V_{T_\ell}(p)$. For simplicity, we denote $T_\ell$ by $T$. Applying \cref{lem:perp} for the time $C^{-1}T$, there exists $s\in [C^{3/2}T^{-1/2},2C^{3/2}T^{-1/2}]$ such that $g_{\log(T)}(v_s(p))\in B(e,1)$. 
        
            Notice that $\left\{g_bv_{s+d}(p))\;:\; |e^{b/2}-1|<1/10, |d|\leq \frac{1}{T}\right\}\subset V_{T}(p)$. This follows from the fact that 
            $$
                \begin{pmatrix}1&0\\s+d&1 \end{pmatrix} \cdot \begin{pmatrix}e^{b/2}&0\\0&e^{-b/2}\end{pmatrix}
                = 
                \begin{pmatrix}e^{b/2}&0\\(s+d)e^{b/2}&e^{-b/2}\end{pmatrix},
            $$
            and $(s+d)e^{b/2}\in[\frac{C^{1/2}}{2}T^{-1/2},4C^{1/2}T^{-1/2}]$. Then it is enough to  show that there exists 
            $t_\ell\in [T_\ell,DT_\ell]$ such that $h_{t_\ell}(x)\in \{g_bv_{s+d}(p))\;:\; |e^{b/2}-1|<1/10, |d|\leq \frac{1}{T_\ell}\}$. 
            
            Composing with $g_{\log T}$ and using the renormalization equations \eqref{eq:norm}, this in turn is equivalent to showing that there exists $\bar{t}\in [1,D]$ such that 
            \[
                h_{\bar{t}}(g_{\log T}(x))\in G = G(s,p,T):=\{g_bv_d(g_{\log T}v_sp)\;:\; |b/2-1|<1/10, 0\leq d\leq 1 \}.
            \]
            Let $\tilde{G}=\bigcup_{t\in (0,1)}h_t(G)$. Notice that $\mu_X(\tilde{G})\geq d_0$ for some $d_0$ depending only on $\Gamma$. In particular, we need to show that there is $D>0$ such that
            \begin{equation}\label{eq:chit}
            \int^{D}_{1} \mathbbm{1}_{\tilde{G}}(h_t(g_{\log T}(x)))dt>0.
            \end{equation}
            \cref{thm:SFF} and \cref{lem:approx} imply that
            $$
            \left|\frac{1}{D}\int^{D}_{1} \mathbbm{1}_{\tilde{G}}(h_t(g_{\log T}(x)))dt-\mu(\tilde{G})\right|
            \leq 
            C' C^{5} \bar{r}^{-\alpha},
            $$
            where $\bar{r}=D\cdot e^{-dist(g_{\log D}(g_{\log T}(x)),e)}$. Note that by Equation \eqref{eq:ret}, $g_{\log(DT)}x\in K$, and so $\bar{r}\geq Dd'\geq D^{1/2}$ for \( D \) large enough. Plugging this into the equation above, we obtain $\frac{1}{D}\int^{D}_{1} \mathbbm{1}_{\tilde{G}}(h_t(g_{\log T}(x)))dt>0$, as desired.
        \end{proof}
        
        Using the set-up from this subsection, we now give a proof of Equation \eqref{eq:subs'}. 
        
        \begin{proof}
            Given $x \in X$ non-periodic and $p = g_{s_0}(e_i)$ for some \( s_0 \in \R , i \leq k \), let $(t_\ell)$ be the sequence given by \cref{prop:per}. We now argue similarly as in \cref{subsec:eqn1 proof}. Let $M_\ell=2^{2^{t_\ell}}$, so that $t_\ell=\log \log M_\ell$.

            Given $\varepsilon > 0$, let $C_\varepsilon$ that guaranteed by \cref{thm: Hardy Ramanujan}. Let $I_\ell$ denote the interval 
            \[
                I_{M_\ell}(C_\varepsilon) 
                =
                \log \log M_\ell + [  - C_\varepsilon \sqrt{\log \log M_\ell}\,,\,  C_\varepsilon \sqrt{\log \log M_\ell}] 
                =
                t_\ell + [-C_\varepsilon t_\ell^{1/2}, \, C_\varepsilon t_\ell^{1/2}].
            \]
            and for $j\in \Z$, set $K_{j}= [j\varepsilon^4 t_\ell^{1/2},(j+1)\varepsilon^4t_\ell^{1/2})$. We consider intervals $\{K_j\}_{j=-U}^U$, where $U=C_\varepsilon \varepsilon^{-4}$. Notice that \( t_\ell + K_j \) partitions \(I_\ell\). Then 
            \begin{align} \label{eq:approx}
                \notag \frac{1}{M_\ell}\sum_{n\leq M_\ell}\psi(h_{\Omega(n)}x)
                &= 
                \frac{1}{2\pi \cdot t_\ell^{1/2}}  \sum_{t \in I_\ell} G_{t_\ell}(k) \psi(h_t x) + o(1)
                \\
                &= 
                \frac{2C_\varepsilon}{\sqrt{2\pi}\cdot 2U}\sum_{j=-U}^U \frac{G_N(j)}{|K_j|}\sum_{t \in K_j}\psi(h_{t}(h_{t_\ell}x))+ o(1),
            \end{align}
            where $G_{t_\ell}(j) = G_{t_\ell}(k_j) = e^{k_j^2/(2 t_\ell)}$ and $k_j$ is the right endpoint of $K_j$. Since \( t_\ell \) was chosen by \cref{prop:per}, there exists \( \gamma \in \Gamma\) such that 
            \[
                \gamma x \begin{pmatrix}1&t_\ell\\0&1 \end{pmatrix} = p_a \begin{pmatrix}1&0\\ac&1 \end{pmatrix},
            \]
            where $p_a=g_{2\log a}p$, $|a-1| < 1/10$, and \(c \in \left[ \frac{C^{1/2}}{2} T_\ell^{-1/2} , 4 C^{1/2} T_\ell^{-1/2}\right] \). Consider those $j\in [-U,U]$ for which $|1+act|<\varepsilon$ for some $t\in K_j$. Since $t_\ell^{-1/2}\ll |ac|\ll t_\ell^{-1/2}$, the union of all such $\{K_j\}$ has length $\ll \varepsilon \sqrt{t_\ell}$, and so they can be discarded. 

            Now, applying \cref{lem:tadd} with \( y = h_{t_\ell} x\), \(J = K_j = [u_j, u_{j+1})]\), and \( R = t_\ell^{1/2}\), we obtain 
            \[
                \sup_{t \in K_j} d_X \left( h_{\frac{t}{1+act}(1+acu_j)^2}(g_{-2\log|1+acu_j|}p_a), h_{t+t_\ell}x\right) \ll \varepsilon.
            \]
            Then \cref{lem:ns} implies that
            \begin{align*}
                \frac{1}{|K_j|} \sum_{t \in K_j}\psi(h_{t}(h_{t_i}x))
                &=
                \frac{1}{|K_j|} \sum_{t \in K_j}\psi\left(h_{\frac{t}{1+act}(1+acu_j)^2}g_{-2\log|1+acu_j|}p_a\right)+ o_{\varepsilon \to 0}(1)\\
                &= \nu^i_{s_0+2\log a-2\log|1+acu_j|}(\psi) +o_{j \to \infty}(1)+o_{\varepsilon \to 0}(1).
            \end{align*}
            Combined with Equation \eqref{eq:approx}, this implies 
            $$
                \frac{1}{M_i}\sum_{n\leq M_i}\psi(h_{\Omega(n)}x)=\frac{2C_\varepsilon}{\sqrt{2\pi}\cdot 2U}\sum_{j=-U}^U G_{t_\ell}(j)\nu^i_{s_0+2\log a-2\log|1+cu_j|}(\psi)+ o_{\varepsilon \to 0}(1).
            $$
            Now, set \( c' = ac \, t_{\ell}^{1/2}\). Then, using the assumptions on \( c\) and \(t_\ell\), we see that $c'\in [\frac{C^{3/2}}{2D}, 4C^{3/2}]$. Moreover, by the definition of Riemann integral, 
            \begin{align*}
                \frac{2 C_\varepsilon}{\sqrt{2\pi}\cdot 2U}\sum_{j=-U}^U G_{t_\ell}(j)&\nu^i_{s_0+2\log a+2\log|1+acu_j|}(\psi)\\
                &= 
                \frac{2C_\varepsilon}{\sqrt{2\pi}\cdot 2U}\sum_{j=-U}^U G_{t_\ell}(j)\nu^i_{s_0+2\log a + 2\log|1+c'j\varepsilon^4|}(\psi)
                \\
                &= 
                \frac{1}{\sqrt{2\pi}}\int_{-\infty}^\infty e^{-r^2/2}\nu^i_{s_0+2\log a + \log|1+c'r|} dr+ o_{\varepsilon \to 0}(1).
            \end{align*}
            Notice that the parameters \(a \) and \( c \) in fact depend on the subsequence \( t_\ell\). Summarizing, for any $\varepsilon>0$, there exists $M_\varepsilon$ such that for $M_\ell \geq M_\varepsilon$, there is some $c_\ell'\in [\frac{C^{3/2}}{2D}, 4C^{3/2}]$ and $s'_{\ell}\in [s_0 - \frac12,s_0+ \frac12]$ such that 
            $$
                \left|\frac{1}{M_\ell}\sum_{n\leq M_\ell}\psi(h_{\Omega(n)}x)-\Big(\frac{1}{\sqrt{2\pi}}\int_{-\infty}^\infty e^{-r^2/2}\nu^i_{s'_\ell-2\log|1+c'_\ell r|} dr\Big)(\psi)\right|\ll \varepsilon.
            $$
            Up to a subsequence, both $(c'_\ell)$ and $(s'_{\ell})$ converge to some $c'\in [\frac{C^{3/2}}{2D}, 4C^{3/2}]$ and $s' \in [s_0 - \frac12,s_0+ \frac12]$. Then, possibly taking a further subsequence, we obtain 
            $$
                \lim_{\ell \to \infty} \frac{1}{M_{\ell}}\sum_{n\leq M_{\ell}}\delta_{h_{\Omega(n)}x}=\frac{1}{\sqrt{2\pi}}\int_{-\infty}^\infty e^{-r^2/2}\nu^i_{s'-2\log|1+c' r|} dr.
            $$
            Taking \( D' = 8D\), this finishes the proof.
        \end{proof}

\section{Proof of Theorem \ref{thm:horoubo}} 

    Note that by Theorem \ref{thm:main}, $\mu_X \in Acc^\Omega(x)$ for any $x\in X$. The theorem follows from the following two lemmas:
    
    \begin{prop}\label{lem:nr1} 
        For any $i\leq k$, there exists a dense $G_\delta$-set of points $G_i\subset X$ such that for every $x\in G_i$,  $\nu^i_\infty \in Acc^\Omega(x)$.
    \end{prop}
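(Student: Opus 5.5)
We will write $G_i$ as a countable intersection of open dense sets and apply the Baire category theorem, since $X$ is a locally compact metric space. Fix a continuous $\phi_R:X\to[0,1]$ with compact support, equal to $1$ on $B(e,R)$. Since $X_\infty$ is compact, $\nu^i_\infty\in Acc^\Omega(x)$ as soon as for every $R\in\N$ there are arbitrarily large $N$ with
\[
\frac1N\sum_{n\le N}\delta_{h_{\Omega(n)}x}(\phi_R)<\tfrac1R
\]
and, in addition, the orbit segment $\{h_t x\}$ for $t$ in the relevant window lies in the $i$-th cusp. We make the second condition precise with a fixed cusp neighborhood $\mathcal C_i$ (the complement of a large compact set, intersected with the $i$-th end). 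Define
\[
U_{R,M}=\Big\{x\in X:\ \exists N\ge M \text{ with } \tfrac1N\#\{n\le N: h_{\Omega(n)}x\notin \mathcal C_i\setminus B(e,R)\}<\tfrac1R\Big\}.
\]
For fixed $N$ the map $x\mapsto h_{\Omega(n)}x$ is continuous for each $n\le N$, and $\mathcal C_i\setminus \overline{B(e,R)}$ can be taken open. The condition is therefore open in $x$, so each $U_{R,M}$ is open. We set $G_i=\bigcap_{R,M}U_{R,M}$. For $x\in G_i$ we get a sequence $N_R\to\infty$ along which the $\Omega$-averages put mass at least $1-1/R$ deep in the $i$-th cusp, so these averages converge to $\nu^i_\infty$.

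The main step is density of each $U_{R,M}$. Given an open set $W\subset X$, we will find $x\in W$ and $N\ge M$ such that $h_t x\in\mathcal C_i\setminus B(e,R)$ for all $t\in I_N(C_\varepsilon)=\log\log N+[-C_\varepsilon\sqrt{\log\log N},C_\varepsilon\sqrt{\log\log N}]$, with $\varepsilon<1/(2R)$. \cref{thm: Hardy Ramanujan} then shows that all but an $\varepsilon$-proportion of $n\le N$ satisfy $\Omega(n)\in I_N(C_\varepsilon)$, which gives $x\in U_{R,M}$.

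To build such an $x$, start from a point $y\in W$ that lies on a periodic orbit in the $i$-th cusp. Periodic points of each cusp are dense, since $\Gamma$-cusp horocycles $g_s(\text{Orb}(e_i))$, $s\in\R$, are dense by \cref{lem:perppoint} and the density of closed horospheres. Then push $y$ off slightly in the $v$-direction: set $x=y\,v_c$ with $|c|$ tiny, so that $x\in W$. Now choose $p_0=g_{s}(e_i)$ with $s$ very large, a periodic orbit deep in cusp $i$ and disjoint from $B(e,R)$ together with a neighborhood. Using the computation in \cref{lem:tadd}, $h_t(y v_c)$ equals the sheared periodic orbit through $g_{-2\log|1+ct|}y$, up to an error of size $|c|/|1+ct|$. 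As $t$ increases towards $-1/c$ (choosing $c<0$), the geodesic parameter $-2\log|1+ct|$ tends to $+\infty$, and the orbit goes deep into cusp $i$. It stays there for $t$ in an interval around $-1/c$ of length comparable to $e^{-S/2}/|c|$, where $S$ is the required depth.

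We choose $c$ so that $-1/c=T_0:=\log\log N$ for some huge $N\ge M$. The requirement is then that the window of length $2C_\varepsilon\sqrt{T_0}$ fits inside the excursion interval, whose length is of order $e^{-S/2}T_0$. This holds once $T_0$ is large compared to $e^{S}C_\varepsilon^2$. Taking $N$ large also makes $|c|=1/T_0$ small, which keeps $x\in W$.

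The expected obstacle is the error control near $t=-1/c$. There $|1+ct|$ is small, so the approximation of \cref{lem:tadd} (which needs $|1+ct|>\varepsilon$) breaks down. We will therefore use the exact identity from the proof of \cref{lem:tadd} rather than the lemma itself. The extra factor in that identity is $g$ of the quantity $-2\log|1+ct|$ times $v_{\bar c(t)}$. The diagonal part only pushes further into the cusp. The term $v_{\bar c(t)}$ with $\bar c(t)=c/(1+ct)$ becomes large, but after conjugation by the deep-cusp geodesic it corresponds to the point $h_{t'}(p_0)\,v_{\bar c(t) e^{\ldots}}$. We must check that the depth in cusp $i$, measured by $\text{dist}(\cdot,e)$, stays at least $R$ throughout the window. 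One can instead avoid the singular point by placing the window where $1+ct\in[e^{-S/2},2e^{-S/2}]$. This has length of order $e^{-S/2}T_0$, still much larger than $\sqrt{T_0}$, and there $|\bar c(t)|\le |c|e^{S/2}$ is tiny, so \cref{lem:tadd} applies directly with $\varepsilon=e^{-S/2}$. This is the route I would take.
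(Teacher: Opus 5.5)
Your proof is correct, and it reaches density by a genuinely different mechanism from the paper's.

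\textbf{The paper's route.} It first proves an excursion statement, \cref{prop:cusp}. Its dense $G_\delta$ set is $\bigcap_{\ell,m}\bigcup_{T>m}h_{-T}(H(T,p_\ell))$, with $p_\ell$ on closed horocycles of period tending to $0$. Points in this set have orbits that shadow the deep orbit of $p_\ell$ for $|t-T|\le w(T)^{-1}=T^{1/2+1/100}$. Hardy--Ramanujan is applied only at the end, with $M_i=e^{e^{T_i}}$. Density comes from \cref{lem:cusp}: it pushes a short $v$-segment at $p$ backwards by $h_{-T}$, identifies it with a long piece of a closed horocycle of large period, and invokes Sarnak's equidistribution theorem to hit an arbitrary ball.

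\textbf{Your route.} You build the $\Omega$-statistics directly into the open sets $U_{R,M}$; openness is then just continuity of the finitely many maps $h_{\Omega(n)}$, $n\le N$. You prove density ``forwards'': take a cusp-$i$ closed-horocycle point $y\in W$ and shear it to $yv_c$. By the exact identity behind \cref{lem:tadd}, the orbit over $I_N(C_\varepsilon)$ stays within $O(|c|e^{S/2})$ of closed horocycles of period at most $4e^{-S}P_y$. This needs no equidistribution at all, only the density of cusp-$i$ closed horocycles in $X$. That fact follows from the density of the $\Gamma$-orbit of the $i$-th cusp point in $\partial\mathbb{H}$, not from \cref{lem:perppoint}, so cite the correct fact.

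\textbf{What each buys.} Your trick is specific to escape of mass. In \cref{lem:nr2} one must land near a \emph{prescribed} closed horocycle with shear of size exactly $c'T^{-1/2}$. Shearing a point of $W$ that lies on a horocycle of bounded period makes the depth $-2\log|1+cT|$ grow like $\log T$, so the center drifts to the cusp. Keeping the center bounded requires closed horocycles of period $\asymp T$ to pass through $W$, which is exactly where Sarnak's theorem enters. The paper's backward construction therefore handles both propositions uniformly.

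\textbf{Two points to tighten.} First, choose $c$ by $1+c\log\log N=\tfrac32 e^{-S/2}$ rather than $-1/c=\log\log N$. Then all of $I_N(C_\varepsilon)$ lies where $1+ct\in[e^{-S/2},2e^{-S/2}]$, which holds once $\log\log N\gg C_\varepsilon^2e^{S}$. Second, note that $S$ depends on $W$ through $P_y$; this is harmless, since density is checked one open set at a time.
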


    \begin{prop} \label{lem:nr2} 
    For any $i\leq k$, $s, c, z\in \R$ with $1-cz\neq 0$, there exists a dense $G_\delta$-set of points $G\subset X$ such that
    $$ 
    \frac{1}{\sqrt{2\pi}}\int_{-\infty}^{\infty}e^{-\frac{r^2}{2}}\nu^{i}_{s+2\log|1+c(r-z)|} dr\in Acc^\Omega(x).
    $$

    \end{prop}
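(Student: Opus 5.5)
The plan is a Baire category argument: for each $m\in\N$ define an open dense set $U_m$ of points for which some $N\ge m$ makes the $\Omega$-average of $\delta_{h_{\Omega(n)}x}$ within $1/m$ of the target measure
\[
\kappa:=\frac{1}{\sqrt{2\pi}}\int_{-\infty}^{\infty}e^{-\frac{r^2}{2}}\nu^{i}_{s+2\log|1+c(r-z)|}\, dr,
\]
tested against a fixed countable dense family $\{\psi_j\}_{j\le m}\subset C_c(X)$. We also require tightness, namely mass at least $1-1/m$ on the ball $B(e,m)$. Then $G=\bigcap_m U_m$ is a $G_\delta$, and every $x\in G$ has $\kappa\in Acc^\Omega(x)$. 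Openness of $U_m$ holds because, for fixed $N$, the average $\frac1N\sum_{n\le N}\psi(h_{\Omega(n)}x)$ involves only finitely many times $\Omega(n)\le \log_2 N$, so it is continuous in $x$. Strictly, $U_m$ is defined by strict inequalities, which are open conditions.

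The real content is density of $U_m$. Fix an open set $W\subset X$ and $m$. I want $x\in W$ and a large $N$ with $T=\log\log N$ such that $h_T x$ is placed exactly as in the hypothesis of \cref{lem:fini}:
\[
h_T x=\gamma p\begin{pmatrix}a&0\\ c_0&a^{-1}\end{pmatrix}
\]
with $p=g_{s_0}(e_i)$ and $c_0=c'/(a\sqrt{T})$. The parameters $s_0$, $a$, $c'$ and the offset are chosen so that the output measure of \cref{lem:fini} equals $\kappa$. Matching $s+2\log|1+c(r-z)|$ with $s_0+2\log a-2\log|1+c'(r-z')|$ requires rewriting $|1+c(r-z)|$ as a reciprocal-type expression. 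Since only the shape $\log|1+\cdot|$ appears, I would reparametrize, for instance by shifting the base time $T$ by $z'\sqrt{T}$ and absorbing constants into $s_0$. The hypothesis $1-cz\neq0$ ensures that the normalization at $r=0$ is legitimate. This algebraic matching is routine.

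To produce such an $x$, take any $y\in W$ and set
\[
x:=\gamma p\begin{pmatrix}a&0\\ c_0&a^{-1}\end{pmatrix}h_{-T}.
\]
We need $x$ close to $y$. Use the density of the horocycle orbit through $h_T^{-1}$ of the correct $(g,v)$-shifted periodic orbit. Concretely, the set $\{q\,v_{c_0}h_{-T}: q\in \mathrm{Orb}(p)\}$ is a long horocycle piece pushed by $h_{-T}$. Since $h_{-T}v_{c_0}h_T$ is a small perturbation, this amounts to showing that the long stretch $\{g_\bullet v_{c_0}(p)h_{-t}\}_{t\in[T,2T]}$ becomes dense, and in fact equidistributes in $X$ as $T\to\infty$. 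This is because $c_0T\to\infty$ (compare \cref{lem:ntc} and \cref{rem:SFF}): after renormalizing by $g_{\log T}$, the point $v_{c_0}p$ becomes $v_{c_0T}$ applied to a point near the cusp, and $v$-orbits of length $c_0T\sim\sqrt T$ equidistribute by \cref{rem:SFF}. So the plan is to pick $t\in[T,2T]$ with $q\,v_{c_0}h_{-t}$ within $\delta$ of $y$, and then set $N$ with $\log\log N=t$.

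The main obstacle is this density step. One must verify that the $\bar r$ in \cref{rem:SFF} is large for the relevant renormalized point, i.e.\ control $dist(g_{-\log T}\cdot,e)$ for a point on a sheared periodic orbit. This is analogous to the computation in \cref{lem:perp}. Once $x$ is $\delta$-close to $y$ with the exact relation at time $T$, \cref{lem:fini} (with $\xi=\xi'=0$) gives convergence of the $\Omega$-averages to $\kappa$ as $N\to\infty$, uniformly enough that a single large $N$ achieves the $1/m$ accuracy. Tightness follows because $\kappa$ is a probability measure on $X$ when $c\ne0$, and since the output of \cref{lem:fini} has no escape of mass, $x\in U_m$.
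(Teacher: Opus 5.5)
\textbf{Gap: the density step.} Your Baire-category framework is sound, and it is a reasonable variant of the paper's. You get openness of $U_m$ from continuity in $x$ of the finite-$N$ averages. You then need \cref{lem:fini} only at one exact point; its error depends on $x$ only through the relation at time $T$, so it is uniform. The paper instead thickens the target to the open set $H'(T,p,c')$ and uses that \cref{lem:fini} tolerates the perturbations $g_{\xi'}h_\xi$ and $c''$.

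The gap is the density step, which is the whole content of the proposition. The mechanism you propose fails in three ways.
\begin{enumerate}
\item[(i)] $h_{-T}v_{c_0}h_T=\begin{pmatrix}1-c_0T&-c_0T^2\\ c_0&1+c_0T\end{pmatrix}$ with $c_0T\asymp\sqrt T$, so it is far from a small perturbation.
\item[(ii)] Letting $t$ range over $[T,2T]$, and moving in the $g$-direction, changes the inputs $ac_0\sqrt{t}\in[c',\sqrt2\,c']$ and $s_0+2\log a$ of \cref{lem:fini}. Density of that two-parameter family would therefore produce other Gaussian mixtures, not $\kappa$.
\item[(iii)] \cref{rem:SFF} does not give equidistribution of $\{v_r(g_{\log T}p)\}_{0\le r\le c_0T}$. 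Computing as in \cref{lem:perp}, the point $g_{-\log(c_0T)}g_{\log T}p=g_{\log(1/c_0)}p$ lies at distance $\log(1/c_0)+O_p(1)$ from $e$. Hence $\bar r\asymp_p c_0^2T\asymp c'^2$ stays bounded for fixed $c'$. Geometrically, a $v$-segment of length $\asymp\sqrt T$ starting at height $\asymp T$ in the cusp only reaches bounded height at its endpoint. This is exactly why \cref{lem:perp} needs its shear constant $C_P$ large in terms of the period, which you cannot arrange here because $c$ is prescribed.
\end{enumerate}

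\textbf{What is needed.} The missing ingredient is the content of the paper's \cref{lem:gauss}. With $u=c'/\sqrt T$, one has the identity
\[
v_u\,h_{-T}=h_{-T/(1-uT)}\;g_{-2\log|1-uT|}\;v_{u/(1-uT)} .
\]
It shows that the curve $\{q\,v_uh_{-T}: q\in\mathrm{Orb}(p)\}$ you wrote down lies within $O(1/T)$ of the \emph{entire} closed horocycle through $g_{-2\log|1-uT|}(p)$. That horocycle has period $(1-uT)^2\,\mathrm{per}(p)\asymp_p c'^2T\to\infty$. Sarnak's theorem that long closed horocycles equidistribute then gives a point of this curve in $W$ for every large $T$. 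The paper instead varies the $v$-coordinate by $|s'|<T^{-2/3}$, sweeping a segment of length $\gtrsim T^{4/3}$ on that closed horocycle. Your variation along $\mathrm{Orb}(p)$ has the advantage of keeping the shear exactly $u$.

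\textbf{The matching step.} Your ``routine'' matching cannot work as written. The product $|1+c(r-z)|\cdot|1+c'(r-z')|$ is never constant in $r$ when $c\neq0$, so a $+2\log$ target is never a $-2\log$ output of \cref{lem:fini}. The sign in the statement should be $-$, as in \cref{thm:horoubo} and the last line of the paper's proof. With that sign, the factorization $1+c(r-z)=(1-cz)\bigl(1+\tfrac{c}{1-cz}r\bigr)$ gives the matching, and this is where $1-cz\neq0$ is used.
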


    Indeed notice that it is enough to take a countable dense set in $\{(z,c,s)\;:\; z,c,s\in \R \}$ and apply the dominated convergence theorem and Baire category theorem together with Propositions \ref{lem:nr1} and \ref{lem:nr2}.

    \subsection{Proof of Proposition \ref{lem:nr1}}
    
        Recall that $C_j$ denotes the $j$-th cusp for \( j \leq k \), where \(k\) denotes the number of inequivalent cusps. Let $w(t)$ be a fixed positive function such that $w(t)\cdot t\to \infty$ as $t\to +\infty$. We prove the following proposition:
        \begin{prop}\label{prop:cusp}
            For each \( j \leq k \), there exists a dense $G_\delta$-set $G_j\subset X$ such that for every $x\in G_j$, there exists a sequence of times $(T_i)_{i \in \N}$ such that 
            $$
                h_{s}x\in C_j \setminus K\text{ for } s\in [T_i-w(T_i)^{-1}, T_i+w(T_i)^{-1}]
            $$
            for every compact set $K\subset X$ and sufficiently large $i\geq i_K$.
        \end{prop}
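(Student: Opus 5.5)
We will prove \cref{prop:cusp} by a standard Baire category argument. The set we want is \(G_j=\bigcap_{m\in\N} U_m\), where \(U_m\) is open and dense. Fix an exhaustion of \(X\) by compact sets \(K_1\sse K_2\sse\cdots\), where \(K_m=\{y: dist(y,e)\le m\}\), so that \(C_j\setminus K_m\) is the part of the \(j\)-th cusp at height \(>m\). We set
\[
U_m=\Big\{x\in X\;:\; \exists\, T>m \text{ with } h_s x\in C_j\setminus K_m \text{ for all } s\in [T-w(T)^{-1}-1,\,T+w(T)^{-1}+1]\Big\},
\]
where the extra padding \(\pm1\) will give openness. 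Suppose \(x\in\bigcap_m U_m\), and pick times \(T_m>m\) witnessing membership in \(U_m\). For a given compact \(K\) we have \(K\sse K_m\) for all large \(m\), so the conclusion holds with \(T_i=T_i\) (after passing to an increasing subsequence). Therefore it suffices to show that each \(U_m\) is open and dense.

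\emph{Openness.} If \(x\in U_m\) with witness \(T\), consider the compact orbit segment \(\{h_s x: s\in J\}\), where \(J\) is the padded interval. This segment lies in the open set \(C_j\setminus K_m\). By continuity of the action \((y,s)\mapsto h_s y\) and compactness of \(J\), every \(y\) close enough to \(x\) satisfies \(h_s y\in C_j\setminus K_m\) for all \(s\in J\). (If one prefers a closed condition, shrink the padding slightly; either way the set is open.) So \(U_m\) is open.

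\emph{Density.} This is the main step. Fix \(x_0\in X\) and \(\rho>0\); we need \(y\) with \(d_X(y,x_0)<\rho\) and \(y\in U_m\). The plan is to perturb \(x_0\) in the opposite horocycle direction so that a long horocycle segment goes deep into the cusp \(C_j\). Take a periodic point \(p=g_{s}(e_j)\) in the \(j\)-th cusp. By \cref{lem:perppoint}, \(g_t(e_j)\) diverges into \(C_j\) as \(t\to\infty\), so for any \(m\) there is \(s\) such that the entire closed orbit of \(p\) (of period \(e^{-s}\)-type, small) lies in \(C_j\setminus K_{m+1}\). Since the set of points whose forward geodesic orbit is dense (equivalently, \(\{g_t x\}\) visits every neighbourhood) is dense, and since stable leaves of \(g_t\) are \((h_t)\)-orbits together with the \(g\)-direction, we argue as follows. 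The weak-stable manifold \(\{g_b h_a p\}\) of the periodic orbit, pushed back by \(g_{-\tau}\) for large \(\tau\), becomes long horocycle segments. More concretely, we use that the \(g_{-\tau}\)-orbit of the set \(W=\{g_b v_c h_a p: |b|,|c|<\eta\}\) becomes dense, since \(g_t\) is mixing for \(\mu_X\): \(\mu_X(g_{-\tau}W\cap B_\rho(x_0))>0\) for large \(\tau\). So we pick \(y=g_{-\tau}(g_b v_c h_a p)\in B_\rho(x_0)\). Then by \eqref{eq:norm}, \(h_t y = g_{-\tau}(g_b v_c h_{a+e^{-\tau}t\cdot(\cdot)}p)\) up to a bounded \(v\)-error. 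For \(t\) in a window of length \(\asymp e^{\tau}\) around \(T:=e^{\tau}\), \(h_t y\) stays within distance \(O(\eta)\) of \(g_{-\tau}\)-images of points of the periodic orbit. Here we choose the scaling so that these images lie at height \(>m\) in \(C_j\). Concretely, we work with \(g_{-\tau}g_{\tau'}\) with \(\tau'\) chosen so that the net geodesic push is into the cusp, using \cref{lem:tadd}-type shearing computations to control the \(v_c\) component on a window of length \(\varepsilon^3 |c|^{-1}\). Since \(w(T)^{-1}=o(T)\), a window of length \(\asymp T\) (or even of fixed length \(2w(T)^{-1}+2\ll T\)) around \(T\) suffices. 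This gives \(y\in U_m\).

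The expected obstacle is this density construction: one must simultaneously keep \(y\) near \(x_0\) and make the horocycle orbit of \(y\) sit in the cusp for a window of length \(2w(T)^{-1}+2\) at some time \(T>m\). I would first try the simplest version. Since the required window length \(2w(T)^{-1}+2\) is \(o(T)\) and in particular bounded relative to \(T\), it suffices to find \(T>m\) and \(y\) near \(x_0\) with \(h_T y\) deep in \(C_j\). The window condition then follows from the fact that \(h_s\) for bounded \(|s|\) moves points only boundedly, and the cusp region at height \(m+1+O(1)\) is contained in the region at height \(m\). Finding such \(y\) is immediate: \(h_T\) is a homeomorphism, so we may take \(y=h_{-T}z\) for \(z\) deep in \(C_j\). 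The remaining task is to ensure that \(h_{-T}z\in B_\rho(x_0)\) for some \(z\in C_j\setminus K_{m+1}\) and some \(T>m\). This follows from topological mixing of \(h_1\) on \(X\), or more elementarily from density of the \((h_t)\)-orbit of a Haar-generic point \(z_0\), chosen in the open set \(C_j\setminus K_{m+1}\) near a generic point. We would carefully choose the window padding as \(\sup_{T>m}w(T)^{-1}\), which is finite for large \(m\) since \(w(T)T\to\infty\) need not bound \(w^{-1}\). If \(w^{-1}\) is unbounded, we instead use the \(o(T)\) window via the periodic-orbit shearing argument above.
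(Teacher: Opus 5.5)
There is a genuine gap, in the density step, which is the whole content of the statement.

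Your reduction is sound. The sets $U_m$ are open by the tube-lemma argument, and a point of $\bigcap_m U_m$ has the required property. Defining $U_m$ directly by the conclusion is a perfectly good substitute for the paper's sets $h_{-T}(H(T,p_\ell))$.

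\textbf{The ``simplest version'' only covers bounded windows.} The hypothesis $w(T)T\to\infty$ allows $w(T)^{-1}\to\infty$, and \cref{lem:nr1} uses $w(T)^{-1}=T^{1/2+1/100}$. So $\sup_{T>m}w(T)^{-1}$ is in general infinite, and an $o(T)$ window is not a bounded one. Your argument therefore handles only the easy case of bounded $w^{-1}$.

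For unbounded windows, a point deep in $C_j$ need not keep its horocycle orbit there. Write $z=q\,v_c$ with $q$ on a closed horocycle. The identity from \cref{lem:tadd} gives $q\,v_c\,h_s=q\,h_{s/(1+cs)}\,g_{-2\log|1+cs|}\,v_{c/(1+cs)}$. Hence once $|cs|\gg1$ the point is pushed down the cusp by the factor $(1+cs)^2$.

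What you actually need is $h_Ty\in E_{S}$ with $S=w(T)^{-1}+1$, where $E_S$ is the set of points whose orbit stays in $C_j\setminus K_m$ for $|r|\le S$. By ergodicity, $\mu_X(E_S)\to0$ as $S\to\infty$. Neither a single Haar-generic point nor topological mixing (which concerns fixed open sets) shows that $h_{-T}(E_{S(T)})$ meets $B_\rho(x_0)$.

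\textbf{The fallback does not fill the gap.} With $y=g_{-\tau}(g_bv_ch_ap)$ you have $y=q\,v_{ce^{b-\tau}}$, where $q=g_{b-\tau}(h_ap)$ lies on a closed horocycle of period $e^{\tau-b}\,\mathrm{per}(p)$. The geodesic push-back lengthens the closed horocycle, i.e.\ moves it \emph{out} of the cusp. Since the $v$-offset is tiny, the shadowing starts at $t=0$. So $h_ty$ follows a long, equidistributing closed horocycle passing through $B_{2\rho}(x_0)$, not one contained in $C_j\setminus K_m$. Arranging the net push to go into the cusp would put $y$ itself deep in the cusp. In any case, the $\tau$ supplied by mixing depends on $W$, hence on $p$, so the construction is circular.

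\textbf{Missing ingredient.} What you need is \cref{lem:cusp}: for $T$ large, $h_{-T}(\{v_s(p):|s|<w(T)\})$ meets every open set. The paper proves it by an explicit computation with $s=k_T/T+R/T^2$, where $k_T\to\infty$ and $2k_T/T\le w(T)$; this is exactly where $w(T)T\to\infty$ is used. The computation shows that $h_{-T}(v_sp)$ is close to $h_Z(p')$. Here $p'$ is obtained from $p$ by the geodesic flow and has period tending to infinity with $k_T$, and $Z$ sweeps a long interval as $R$ varies. Sarnak's equidistribution of long closed horocycles then yields a point in the open set.

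Conversely, suppose $h_Ty=v_s(p)$ with $|s|$ a small multiple of $w(T)$. Then the identity above keeps $h_{T+r}y$ within bounded distance of the closed horocycle of $p$ throughout the window. With this lemma your $U_m$ are dense, and the rest of your argument goes through.
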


        \begin{proof}[Proof of \cref{lem:nr1}]
             Let $w(t)=T^{-1/2-1/100}$ and let \( (T_i) \) be that guaranteed by \cref{prop:cusp}. Taking $M_i=e^{e^{T_i}}$ implies that $h_{\Omega(s)}x\in  C_j \setminus K$ for all 
             \[
                s\in [T_i-T_i^{1/2+1/100}\,,\,T_i+T_i^{1/2+1/100}],
            \]
            except on a set of density going to $0$ (see Theorem \ref{thm: Hardy Ramanujan}). This implies Proposition \ref{lem:nr1}.
        \end{proof}

        The main tool in proving \cref{prop:cusp} is the following lemma:
        
        \begin{lemma}\label{lem:cusp} 
            Let $\varepsilon>0$ and $B\subset X$ an open set. Then there exists $T_0=T_0(\varepsilon,B)>0$ such that for every $T>T_0$ and every periodic point $p$ of period $\geq \varepsilon$, we have 
            $$
                h_{-T}\Big(\{v_s(p)\;:\; |s|<w(T)\}\Big)\cap B \neq \emptyset.
            $$
        \end{lemma}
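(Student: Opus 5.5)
The plan is to make the set $h_{-T}\{v_s(p)\}$ explicit, using the same matrix factorization as in \cref{lem:tadd}, and recognize it as a small perturbation of long closed horocycles, which equidistribute. Writing the action as right multiplication, $h_{-T}(v_s p) = p\, v_s h_{-T}$, and a direct computation gives, for $1-sT\neq 0$,
\[
v_s h_{-T}=\begin{pmatrix}1&-T\\ s&1-sT\end{pmatrix}
= \begin{pmatrix}1&\frac{-T}{1-sT}\\0&1\end{pmatrix}\begin{pmatrix}(1-sT)^{-1}&0\\0&1-sT\end{pmatrix}\begin{pmatrix}1&0\\ \frac{s}{1-sT}&1\end{pmatrix}.
\]
Thus $h_{-T}(v_sp)$ is within $d_G$-distance $\ll |s/(1-sT)|$ of a point on the closed horocycle through $g_{\sigma(s)}(p)$, where $\sigma(s)=-2\log|1-sT|$. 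The position along that horocycle is determined by $u(s)=-T/(1-sT)$, after the renormalization \eqref{eq:norm}.

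I restrict to the parameters $s\in[-w(T),-w(T)/2]$. This is where $wT\to\infty$ is used. On this range $|1-sT|\asymp w(T)T\to\infty$, so the $v$-perturbation $|s/(1-sT)|\asymp 1/T$ is negligible. Meanwhile $\sigma(s)$ sweeps an interval of length about $2\log 2$ around $-2\log(w(T)T)\to-\infty$. By \eqref{eq:norm} this corresponds to closed horocycles whose period tends to infinity, uniformly for all $p$ of period $\ge\varepsilon$: the period is multiplied by a factor $\gg (w(T)T)^{2}$, up to the sign convention, which I will double-check.

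The key observation is that $\sigma(s)$ varies slowly in $s$ while $u(s)$ varies quickly. Over a subinterval of $s$ of relative length $\eta$, $\sigma$ changes by only $O(\eta)$, but $u$ (renormalized by the factor $e^{\pm\sigma}$) sweeps a length comparable to the full period. I will choose $\eta$ small in terms of $B$. Then the set $h_{-T}\{v_s(p)\}$ contains a curve that is $O(\eta+1/T)$-close in $d_X$ to an entire closed horocycle $\{h_t(g_{\sigma_0}p)\}$ of period $P_T\to\infty$. Here I use that $g_\sigma$ moves points by $O(|\sigma-\sigma_0|)$ along the fiber; this is uniform over a compact region, and $B$ may be shrunk to a relatively compact ball.

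Finally, I apply \cref{thm:SFF} (or \cref{lem:approx}, with a smooth bump $\phi$ supported in a slightly shrunken $B'\subset B$) to the periodic point $g_{\sigma_0}p$ over one full period $P_T$. The quantity $r(g_{\sigma_0}p,P_T)$ is bounded below by a function tending to $\infty$ with $P_T$, exactly as in the computation in the proof of \cref{lem:perp}: $g_{\log P_T}g_{\sigma_0}p$ lies on the period-one closed horocycle $h_z e_i$, which stays in a fixed compact set. Hence the orbit average of $\phi$ is at least $\mu_X(\phi)/2>0$ once $T>T_0(\varepsilon,B)$. So the closed horocycle meets $B'$, and by the $O(\eta+1/T)$-closeness the original segment meets $B$.

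The main obstacle is bookkeeping the signs and renormalization factors. I must verify that $\sigma\to-\infty$ indeed corresponds to long periods, and otherwise use the other range of $s$ to obtain the correct direction. I must also check that $u(s)$ sweeps a full period while $\sigma(s)$ moves by only $O(\eta)$, uniformly over all periodic $p$ of period $\ge\varepsilon$. Uniformity follows because every such $p$ is $g_{\log P}h_z e_i$ with $|z|\le1$, by \cref{lem:perppoint}.
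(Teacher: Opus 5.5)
Your reduction is right as far as it goes. For $s<0$, $h_{-T}(v_sp)$ lies within $O(1/T)$ of $h_{-T(1-sT)}\bigl(g_{\sigma(s)}p\bigr)$, a point on a closed horocycle of period $(1-sT)^2P$. Replacing $\sigma(s)$ by $\sigma_0$ costs only $O(|\sigma(s)-\sigma_0|)$, by left-invariance of $d_G$; no compactness is needed. The step that fails is the claim that, on a subinterval of relative length $\eta$, the position sweeps a full period. The fraction of the closed horocycle at which you sit is $T/((1-sT)P)$ mod $1$. As $1-sT$ runs over $[m_0,(1+\eta)m_0]$ with $m_0\asymp w(T)T$, this moves by $\asymp\eta/(w(T)P)$, so you cover about $\eta/(w(T)P)$ periods. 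The only hypothesis on $w$ is $w(t)t\to\infty$. For $w\equiv1$ you cover a fraction $\asymp\eta/P<1$ of the orbit, and for any $w$ and fixed $T$ the coverage tends to $0$ as $P\to\infty$. Equidistribution of the full closed horocycle then tells you nothing. (In the application, $w(T)=T^{-1/2-1/100}\to0$, so for a fixed $p$ your count is fine.) The repair is cheap: since you only need $|s|<w(T)$, work with $|s|\asymp w'(T):=\min\{w(T),T^{-1/2}\}$. Then $w'(T)T\to\infty$, so the periods $\asymp(w'T)^2P$ still blow up. Meanwhile the sweep is $\gtrsim\eta T^{1/2}/P$ periods, hence $\geq1$ for large $T$, uniformly in $P\leq\varepsilon^{-1}$.

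No argument can give the uniformity over all periods $\geq\varepsilon$ that you claim, because the lemma is false in that generality. For fixed $T$, the set $\{h_Tv_sh_{-T}:|s|<w(T)\}$ is bounded in $G$, say within $d_G$-distance $\ell(T)$ of the identity. A closed horocycle of period $P$ contains a point $q$ at distance $\log P-O(1)$ from $e$ (its excursion into a cusp), and $p=h_Tq$ also has period $P$. Since $h_{-T}(v_sp)=q\,h_Tv_sh_{-T}$, the whole set stays at distance $\geq\log P-\ell(T)-O(1)$ from $e$, and so misses any bounded $B$ once $P$ is large. The lemma must therefore be restricted to periods in $[\varepsilon,\varepsilon^{-1}]$, or to a fixed $p$, which is all \cref{prop:cusp} uses.

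With that restriction and the shrunken $s$-range, your route works, and it differs from the paper's. The paper freezes $1-sT\approx1-k_T$ and moves $s$ only by $10k_T/T^2$, so the horocycle parameter moves by about $10k_T$. But the relevant point $g_{-2\log|1-k_T|}(p)$ has period $(1-k_T)^2P$, not $|1-k_T|$; compare the period stated in the proof of \cref{lem:gauss}. So that sweep covers only a fraction $\approx10/(k_TP)$ of the orbit. Letting $s$ vary on a macroscopic scale while $\sigma$ drifts by only $O(\eta)$, as you propose, is what actually produces a full period.
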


        Before proving \cref{lem:cusp}, we show how it implies Proposition \ref{prop:cusp}:
        
        \begin{proof}[Proof of Proposition \ref{prop:cusp}]
            Consider the open set
            $$
            H(T,p)=\bigcup_{|a|,|u|< 1}\{v_s(g_{a}h_up)\;:\; |s|<w(T)\}.
            $$
            Note that 
            \[
                \{v_s(p)\;:\; |s|<w(T)\}\subset H(T,p).
            \]
            Let $(p_\ell)$ be a sequence of periodic points of period going to $0$. Define 
            \[
                C :=\bigcap_{\ell,m\in \N} \left( \bigcup_{T>m, T\in \N}h_{-T}(H(T,p_\ell))\right).
            \]
            Note that $C$ is a $G_\delta$-set. Moreover, by Lemma \ref{lem:cusp}, for every ball $B$ and every $j,m \in \N$, there exists $T>m$ such that 
            \[
                h_{-T}(H(T,p_\ell))\cap B\neq \emptyset.
            \]
            This implies that $C$ is dense. Note that for $x\in C$, it follows that there exists a sequence of times $(T_i)$ and periodic points $\tilde{p}_i = g_{a_i} h_{u_i} p_{\ell_i}$ of period going to $0$ such that 
            \[
                h_{T_i}x\in \{v_s(\tilde{p}_i)\;:\; |s|< w(T_i)\}.
            \]
            This then implies that for every $|r|\leq w(T_i)^{-1}$, $h_{r+T_i}(x)=h_r(v_s(\tilde{p}_i))=h_r(v_s(h_{-r})(h_r(\tilde{p}_i))$. It remains to notice that
            $$
            \begin{pmatrix}1&r\\0&1\end{pmatrix}\begin{pmatrix}1&0\\s&1\end{pmatrix}\begin{pmatrix}1&-r\\0&1\end{pmatrix}=\begin{pmatrix}1+rs&-rs^2\\
            s&1-rs\end{pmatrix}
            $$
            and $|rs|< w(T_i)^{-1}w(T_i) \leq 1$. This implies that for every $|r|< w(T_i)^{-1}$, $h_{r+T_i}(x)$ is a bounded distance from the periodic orbit of $\tilde{p}_i$, which leaves every compact set as $i\to \infty$.
        
        \end{proof}

        \begin{proof}[Proof of Lemma \ref{lem:cusp}]
            We in fact show the statement for $s\in [\frac{k_T}{T}, \frac{k_T}{T}+ \frac{10k_T}{T^2}]$, where $k_T>0$ is a parameter to be specified later satisfying $k_T\to \infty$ and $\frac{2k_T}{T}\leq w(T)$. Let  $\bar{c}(s)=\frac{s}{1-sT}$. By a direct computation, 
            \begin{align}
            \label{eq:cusp1}
                \notag h_{-T}(v_sp) &=\gamma p\begin{pmatrix}1&0\\s&1\end{pmatrix}\begin{pmatrix}1&-T\\0&1\end{pmatrix}=\gamma p\begin{pmatrix}1&-T\\s&1-sT\end{pmatrix}\\
                &= \gamma p \begin{pmatrix}1&\frac{-T}{1-sT}\\0&1\end{pmatrix}\begin{pmatrix}(1-sT)^{-1}&0\\0&1-sT\end{pmatrix} \begin{pmatrix}1&0\\ \bar{c}(s)&1\end{pmatrix}.
            \end{align}

            Taking $s=\frac{k_T}{T}+\frac{R(s)}{T^2}$, where $R(s) \leq 10k_T$, we get that \eqref{eq:cusp1} is equal to
            \begin{align*}
                \gamma p\begin{pmatrix}(1-k_T)^{-1}&0\\0&(1-k_T)\end{pmatrix} &\begin{pmatrix}1&\frac{-T}{1-k_T- \frac{R(s)}{T}}(1-k_T)^2\\0&1\end{pmatrix}\\
                &\qquad \begin{pmatrix}\left(1+\frac{R(s)}{T(1-k_T)}\right)^{-1}&0\\0&1+\frac{R(s)}{T(1-k_T)}\end{pmatrix} \begin{pmatrix}1&0\\ \bar{c}(s)&1\end{pmatrix}.
            \end{align*}
    
            It remains to notice that with our choice of parameters, $|\bar{c}(s)| \ll T^{-1}$ and $\frac{R(s)}{T(1-k_T)} \ll T^{-1}$. Hence $h_{-T}(v_sp)$ is close to $h_{Z(T,R(s))}g_{-\log |1-k_T|}(p)$, where 
            \[
                Z(T,R(s))=\frac{-T}{1-k_T- \frac{R(s)}{T}}(1-k_T)^2.
            \] 
            Note that $Z(T,R(\cdot))$ is a continuous function of $s$, with $Z(T,0)=-T(1-k_T)$ and 
            \begin{align*}
                |Z(T,10k_T)-Z(T,0)| &= \left|T(1-k_t) - \frac{(1-k_T)^2}{1 - k_T - \frac{10k_T}{T}} \right|
                \\
                &= \frac{10k_T(1-k_T)}{1-k_T - \frac{10k_T}{T}}
                \\
                &> 5k_T
            \end{align*}
            for \( T \) large enough. Moreover, $g_{-\log |1-k_T|}(p)$ is a periodic point of period $|1-k_T|$. By a result of Sarnak \cite{Sarnak81}, the horocycle orbit of such periodic points become equidistributed as \( T \to \infty\). Hence the orbit enters the ball $B$ if $k_T$ is large enough. This finishes the proof.
        \end{proof}

        \subsection{Proof of Proposition \ref{lem:nr2}}  

        Analogously to the previous subsection, the main tool is the following lemma:

        \begin{lemma}\label{lem:gauss} 
           Let $B \subset X$ be open, let $c \in \R\setminus\{0\}$, and let $p$ be a periodic point. Then there exists $T_0 = T_0(p, B)>0$ such that for every $T>T_0$, 
            \begin{equation}\label{eq:inq}
                h_{-T}\left(\left\{v_{u+s'}(p)\;:\; |s'|<T^{-2/3}\}\right\}\right)\cap B \neq \emptyset,
            \end{equation}
            where $u=\frac{c}{\sqrt{T}}$.
        \end{lemma}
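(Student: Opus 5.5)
We follow the strategy of the proof of \cref{lem:cusp}: write the image $h_{-T}(v_{u+s'}p)$ explicitly, identify it up to a small error as a point on the closed horocycle through a rescaled periodic point, and then use that these closed horocycles move through a large region as $s'$ varies. Write $s=u+s'$ with $u=c/\sqrt{T}$ and $|s'|<T^{-2/3}$. As in \eqref{eq:cusp1}, with $\bar c(s)=\frac{s}{1-sT}$,
\[
h_{-T}(v_s p)=\gamma p\begin{pmatrix}1&\frac{-T}{1-sT}\\0&1\end{pmatrix}\begin{pmatrix}(1-sT)^{-1}&0\\0&1-sT\end{pmatrix}\begin{pmatrix}1&0\\ \bar c(s)&1\end{pmatrix}.
\]
Here $sT=c\sqrt{T}+s'T$. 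The term $s'T$ ranges over an interval of length $2T^{1/3}$, which is much larger than $c\sqrt{T}$? No: $c\sqrt T\gg T^{1/3}$. So $1-sT\approx -c\sqrt{T}$, with $|1-sT|\asymp |c|\sqrt{T}$ uniformly in $s'$ for $T$ large, because $c\neq0$. This gives $|\bar c(s)|\ll T^{-1}$, so the last factor is negligible.

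Next, pull the diagonal matrix to the left, as in the proof of \cref{lem:cusp}. Set $\lambda=1-uT=1-c\sqrt T$ and write $1-sT=\lambda(1-\frac{s'T}{\lambda})$. Since $\frac{s'T}{\lambda}\ll T^{-1/6}$, the residual diagonal factor is within $O(T^{-1/6})$ of the identity. Hence $h_{-T}(v_sp)$ lies within $O(T^{-1/6})$ of $h_{Z(s')}\,q_T$, where $q_T=g_{-2\log|\lambda|}(p)$ and
\[
Z(s')=\frac{-T\lambda^2}{1-sT}=\frac{-T\lambda}{1-s'T/\lambda}.
\]
The point $q_T$ is periodic with period $\asymp \mathrm{per}(p)\,c^2T$, which tends to infinity. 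As $s'$ ranges over $(-T^{-2/3},T^{-2/3})$, the function $Z$ is continuous and monotone, and its range has length
\[
\asymp T\,\frac{T^{1/3}}{|\lambda|}\cdot|\lambda|\cdot\frac{1}{|\lambda|}\cdot|\lambda|\asymp T^{4/3}/1.
\]
More precisely, $\partial Z/\partial s'\approx -T^2$, so the range has length $\asymp T^{4/3}$. This exceeds the period $\asymp c^2T$ of $q_T$. Therefore $\{h_{Z(s')}q_T\}$ covers the entire closed horocycle through $q_T$.

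Finally, use Sarnak's equidistribution theorem \cite{Sarnak81}, as in \cref{lem:cusp}: closed horocycles of period tending to infinity equidistribute to $\mu_X$. Fix a small ball $B'$ with $B'$ enlarged by $\delta$ contained in $B$. For $T$ large, the closed orbit of $q_T$ meets $B'$, and the $O(T^{-1/6})$ error is less than $\delta$. Hence some $s'$ gives $h_{-T}(v_{u+s'}p)\in B$, which is \eqref{eq:inq}.

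The main obstacle is making the error bookkeeping uniform in $s'$. The range of $s'T$ can be as large as $T^{1/3}$, so we must check that dividing by $\lambda\asymp\sqrt{T}$ really makes the diagonal correction small, and that $Z$ still sweeps out more than one full period. The key point is that the exponent $2/3$ sits strictly between $1/2$ and $1$. Choosing the window as $|s'|<T^{-2/3}$ reconciles both requirements: the window is small enough relative to $u$ that the correction stays small, and large enough that the sweep covers the whole period.
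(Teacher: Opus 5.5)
Your proposal is correct and follows the paper's proof essentially step for step. It uses the same factorization of $h_{-T}(v_{u+s'}p)$ with the $\bar c$ term of size $\ll T^{-1}$, and the same extraction of $g_{-2\log|1-uT|}$ leaving a diagonal correction of size $\ll T^{-1/6}$. Your sweep function $Z$ is the paper's $W$, with range of length $\asymp T^{4/3}$ exceeding the period $\asymp c^2T\,\mathrm{per}(p)$, and you finish with the same appeal to Sarnak's equidistribution of long closed horocycles.

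Only the exposition needs tidying: drop the self-correcting ``No:'' remark and the garbled first estimate of the range of $Z$. The derivative bound $\partial Z/\partial s'\approx -T^2$ already gives the needed $T^{4/3}$.
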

        
        The proof of this lemma is similar to the proof of \cref{lem:cusp}. We provide it here for completeness.
        
        \begin{proof}   
            Let $\bar{c}(r)=\frac{r}{1-rT}$. By a direct computation, 
            \begin{align*}
            \label{eq: cusp2}
                \notag h_{-T}(v_{r}p) &=\gamma p\begin{pmatrix}1&0\\r &1\end{pmatrix}\begin{pmatrix}1&-T\\0&1\end{pmatrix}\\
                &=\gamma p  \begin{pmatrix}(1-rT)^{-1}&0\\0&1-rT\end{pmatrix}\begin{pmatrix}1&-T(1-rT)\\0&1\end{pmatrix} \begin{pmatrix}1&0\\ \bar{c}(r)&1\end{pmatrix}\\
                &=\gamma p  \begin{pmatrix}(1-uT)^{-1}&0\\0&1-uT\end{pmatrix} \begin{pmatrix}1&-T\frac{(1-uT)^2}{1-rT}\\0&1\end{pmatrix}
                \\ 
                & \hspace{160pt} \begin{pmatrix}\frac{1-uT}{1-rT}&0\\0&\Big(\frac{1-uT}{1-rT}\Big)^{-1}\end{pmatrix} \begin{pmatrix}1&0\\ \bar{c}(r)&1\end{pmatrix}.
            \end{align*}     
            Taking \( r = u + s'\), notice that with our choice of parameters, $|\bar{c}(u+s')|\ll T^{-1}$ and $\Big| \frac{1-uT}{1-(u+s')T}-1\Big|\ll T^{-1/6}$. Let $W(s'):=-T\frac{(1-uT)^2}{1-(u+s')T}$. Then  $h_{-T}(v_{u+s'}p)$ is very close to  $h_{W(s')}(g_{-2\log(1-uT)}(p))$. Note that $W(\cdot)$ is continuous and 
            \[
                |W(0)-W(s')|=T(1-uT)^2\frac{s'T}{(1-uT)(1-uT-s'T)}\geq \frac{1}{2} s'T^2.
            \]
            Taking $s'=T^{-2/3}$, we get that the range of $W(\cdot)$ contains an interval of length $T^{4/3}$. Note that $g_{-2\log |1-uT|}(p)$ is a periodic point of period $|1-uT|^2 per(p)$. Then the period goes to infinity with $T$ and is upper bounded by $Tper(p)$. In particular, by the above computation and a result of Sarnak \cite{Sarnak81}, there exists $s'\leq T^{-2/3}$ such that $h_{W(s')}(g_{-2\log(1-uT)}(p))\in B$. This finishes the proof.
        \end{proof}

        We now prove Proposition \ref{lem:nr2} using \cref{lem:gauss}:
        
        \begin{proof}[Proof of Proposition \ref{lem:nr2}]
            Fix $s,c,z \in \R $ as in the statement. Let $s'=s-2\log(1-cz)$ and $c'=\frac{c}{|1-cz|}$. We apply Lemma \ref{lem:gauss} with $p=g_{s'}(e_i)$ and $u=\frac{c'}{\sqrt{T}}$. Let 
            \[
                H'(T,p,c')=\left\{h_\xi g_{\xi'} v_{\frac{c'}{\sqrt{T}}+c''}(p)\;:\; |c''|<\frac{1}{T^{2/3}}, |\xi|,|\xi'|<e^{T^{-100}}\right\}.
            \]
            Note that $H'(T,p,c')$ is open and it contains the set $\left\{v_{\frac{c'}{\sqrt{T}}+c''}(p)\;:\; |c''|<\frac{1}{T^{2/3}}\right\}$. Define 
            \[
                G:=\bigcap_{m\in \N} \left(\bigcup_{T>m,T\in \Z} h_{-T}(H'(T,p,c'))\right).
            \]
            Then $G$ is a $G_\delta$-set by definition. Moreover, by \cref{lem:gauss}, for every ball $B$ and every $m$, there exists $T$ such that $h_{-T}(H'(T,p,c'))\cap B\neq \emptyset$, so \( G \) is dense. For $x\in G$, there exists a sequence \( (T_j)\) such that $h_{T_j}x\in H'(T_j,p,c')$. Hence
            \[
                h_{T_j}x = \gamma p \begin{pmatrix}1&0\\\frac{c'}{\sqrt{T}}+c''& 1 \end{pmatrix}g_{\xi'} h_\xi,
            \]
            where $|c''|< \frac{1}{T^{2/3}}$ and $|\eta'|, |\eta|<e^{T^{-100}}$. We then conclude by Lemma \ref{lem:fini} that along the sequence $T_j'=2^{2^{T_j}}$, 
            $$ 
            \lim_{j \to \infty} \frac{1}{T_j'}\sum_{n\leq T_j'}\delta_{h_{\Omega(n)}x} = \frac{1}{\sqrt{2\pi}}\int_{-\infty}^{\infty}e^{-\frac{r^2}{2}}\nu^{i}_{s'-2\log|1+c'r|} dr
            $$
            It remains to notice that $s'-2\log|1+c'r|=s-2\log|1+c(r-z)|$. This finishes the proof.
        \end{proof}

    \section{Proof of Pointwise Divergence}
        In this section, we prove \cref{cor:ptwise}, demonstrating pointwise almost everywhere divergence of ergodic averages along \( \Omega(n) \). Recall that a point \( x \in X \) is non-periodic for $(h_t)$ if and only if there exists a compact set $K_x \subset X$ and a sequence of times $(T^x_\ell)_{\ell \in \N}$ such that $g_{\log T^x_\ell}x\in K_x$. Note that, excluding a set of zero measure, this compact set can be taken independent of \( x \). To see this, let \( K \subset X \) be any compact set satisfying \( \mu_X(K) > 0 \). Then, by ergodicity of the geodesic flow, 
        \[
            \lim_{T \to \infty} \frac1T \int_0^T \mathbbm{1}_K (g_s x) \, ds = \mu_X(K) > 0
        \]
        for \( \mu_X\)-almost every \( x \in X \). In particular, we can take \( D > 0 \) in \cref{thm:horo} to be independent of the non-periodic point \( x \) off a set of measure zero, say \( \mathcal{N} \subset X\).

        \begin{proof}[Proof of \cref{cor:ptwise}]
            Fix \( i \leq k\). Let \( x \in X \setminus \mathcal{N} \) be a non-periodic point for \( (h_t) \) and let \( D > 0 \) be the constant guaranteed by \cref{thm:horo}. Note that, by the above, \( D \) can be chosen independent of \( x \). Taking \( s_0 = 3/2 \), \cref{thm:horo} implies that there exists \( s_x \in [1,2] \) and \( C_1 > 0\) such that 
            \[
                \tilde{\nu}_x := \frac{1}{\sqrt{2\pi}}\int_{-\infty}^{\infty}e^{-\frac{r^2}{2}}\nu^{i}_{s_x-2\log|1+c_x r|} dr \in Acc^{\Omega}(x)
            \]
            for some \( c_x \in [C_1, D C_1].\) Note that \( C_1 \) depends only on our choice of \( s_0 \). Moreover, \( \mu_X \in Acc^{\Omega}(x)\). Then there exist subsequences \( (N_i), (M_i) \sse \N\) such that for all \( \psi \in C_c(X)\),
            \[
                \lim_{i \to \infty} \frac{1}{N_i} \sum_{ n \leq N_i} \psi(h_{\Omega(n)}x) = \mu_X(\psi) 
            \]
            and 
            \[
                \lim_{i \to \infty} \frac{1}{M_i} \sum_{m \leq M_i} \psi(h_{\Omega(n)}x) = \tilde{\nu}_x(\psi) .
            \]
            Hence, it suffices to find a function \( \psi_0\), independent of \( x \), such that \( \mu_X(\psi_0) \neq \tilde{\nu_x}(\psi_0) \).

            Notice that, since \( C_1 \) and \(D\) are each independent of \(x \), the parameters \( s_x\) and \( c_x \) are uniformly bounded in \( x\). Let \( \varepsilon > 0\) and let \( C_\varepsilon > 0\) be the constant guaranteed by \cref{thm: Hardy Ramanujan}. Then 
            \[
                \tilde{\nu}_x(\psi) = \left( \frac{1}{\sqrt{2\pi}}\int_{-C_\varepsilon}^{C_\varepsilon}e^{-\frac{r^2}{2}}\nu^{i}_{s_x-2\log|1+c_x r|} dr \right) (\psi) + {\rm O}\left(e^{-(C_\varepsilon)^2}\right).
            \]
            Let \( P_{x,r} \) denote the period of the point \( g_{s_x-2\log|1+c_x r|}(e_i)\). Then there exists \( C_2 > 0 \) such that
            \[
                \sup_{0 \leq z \leq P_{x,r}} d_X(h_z g_{s_x-2\log|1+c_x r|}(e_i), e_i) \leq 2 |\log |1+c_x r|| + C_2.
            \]
            Now, set \( I = -\frac{1}{c_x} + \left[ - \frac{e^{-C_\varepsilon}}{c_x}, \frac{e^{-C_\varepsilon}}{c_x}\right] \), and notice that for \( r \notin I\), we have \( \Big|\log |1+c_x r|\Big| <  C_\varepsilon \). Consider the set 
            \[
                B = \{ y \in X \,:\, d_{X}(y, e) \leq 2 C_\varepsilon + C_2 \}.
            \]
            By construction, 
            \begin{align*}
                \left( \frac{1}{\sqrt{2\pi}}\int_{-C_\varepsilon}^{C_\varepsilon}e^{-\frac{r^2}{2}}\nu^{i}_{s_x-2\log|1+c_x r|} dr \right) (\mathbbm{1}_B) 
                &= 
                \left( \frac{1}{\sqrt{2\pi}}\int_{r \in I}e^{-\frac{r^2}{2}}\nu^{i}_{s_x-2\log|1+c_x r|} dr \right) (\mathbbm{1}_B)
                \\
                &\asymp e^{-C_\varepsilon}.
            \end{align*}
            Hence
            \(
                \tilde{\nu}_x(\mathbbm{1}_B) \asymp e^{-C_\varepsilon}.
            \)
            However, 
            \(
                \mu_X(\mathbbm{1}_B) \asymp e^{-2C_\varepsilon}, 
            \)
            so that there exist \( C_3\), \( C_4 > 0\) such that 
            \[
                \tilde{\nu}_x(\mathbbm{1}_B) \geq C_3 e^{-C_\varepsilon} > C_4 e^{-2C\varepsilon} \geq \mu_X(\mathbbm{1}_B)
            \]
            for all non-periodic \( x \in X \setminus \mathcal{N}\). Since this set of points has full measure, taking \( f \) to be a continuous approximation of a cutoff of \( B\) finishes the proof. 
        \end{proof}

\section*{Funding}
  The first author acknowledges the support of the National Science Foundation under grant DMS-2554280 and the second author under grant DMS-2402158.

\bibliography{MainReferenceLibrary}{}
\bibliographystyle{halpha}

\end{document}